\newtheorem{theorem}{Theorem}[section]
\newtheorem{lemma}[theorem]{Lemma}
\newtheorem{corollary}[theorem]{Corollary}
\newtheorem{proposition}[theorem]{Proposition}
\theoremstyle{definition}
\newtheorem{definition}[theorem]{Definition}
\theoremstyle{remark}
\theoremstyle{theorem}
\numberwithin{equation}{section}
\renewcommand\leq{\leqslant}\renewcommand\geq{\geqslant}
\newcommand\Z{\ensuremath{\mathbb Z}}
\newcommand\Q{\ensuremath{\mathbb Q}}
\newcommand\C{\ensuremath{\mathbb C}}
\newcommand\Qb{{\overline\Q}}
\newcommand\Aut{\operatorname{Aut}}
\newcommand\Br{\operatorname{Br}}
\newcommand\End{\operatorname{End}}
\newcommand\Ext{\operatorname{Ext}}
\newcommand\Gal{\operatorname{Gal}}
\newcommand\GL{\operatorname{GL}}
\newcommand\Hom{\operatorname{Hom}}
\newcommand\Inf{\operatorname{Inf}}
\newcommand\M{\operatorname{M}}
\newcommand\ord{\operatorname{ord}}
\newcommand\Res{\operatorname{Res}}
\def\O{{\mathcal O}}
\newcommand\acc[2]{\ensuremath{{}^{#1}\hskip-0.1ex{#2}}}
\newcommand\accl[2]{\ensuremath{{{\phantom{\big|}}}^{#1}\hskip-0.2ex{#2}}}
\def\M{\operatorname{M}}
\def\A{\mathcal A}\def\B{\mathcal Q}\def\D{\mathcal D}
\def\p{\mathfrak p}
\newcommand{\comp}{\begin{picture}(6,5)(-3,-2)\put(0,1){\circle{2}} \end{picture}}\def\circ{\comp}
\subjclass[2010]{Primary 11G10, Secondary 11G18, 11F11}
\title{Modular abelian varieties over number fields}
\author{Xavier Guitart}\author{ Jordi Quer}
\address{X. G.: Max Planck Institute for Mathematics, Vivatsgasse 7, 53111 Bonn, Germany and Departament de
Matem\`{a}tica Aplicada II, Universitat Polit\`{e}cnica de Catalunya, C.
Jordi Girona 1-3, 08034 Barcelona, Spain}
\email{xevi.guitart@gmail.com}
\address{J. Q.: Departament de Matem\`{a}tica Aplicada II, Universitat Polit\`{e}cnica de Catalunya, C.
Jordi Girona 1-3, 08034 Barcelona, Spain}
\email{jordi.quer@upc.edu}
\thanks{This research was partially supported by grants MTM2009-13060-C02-01 and  2009SGR-1220}
\begin{document}
\maketitle
\begin{abstract}
The main result of this paper is a characterization of the abelian
varieties $B/K$ defined over Galois number fields with the
property that the $L$-function $L(B/K;s)$ is a product of $L$-functions of non-CM newforms over $\Q$ for congruence
subgroups of the form $\Gamma_1(N)$. The characterization involves the
structure of $\End(B)$, isogenies between the Galois conjugates of
$B$, and a Galois cohomology class attached to $B/K$.

We call the varieties having this property \emph{strongly modular}.
The last section is devoted to the study of a family of abelian surfaces with quaternionic
multiplication.
As an illustration of the ways in which the general results of the paper can be applied
we prove the strong modularity of some particular abelian surfaces belonging to that family, and
we show how to find nontrivial examples of strongly modular varieties by twisting.
\end{abstract}

\section{Introduction}
We will work in the category of abelian varieties up to isogeny,
in which the objects are abelian varieties and the morphisms
between two varieties $A$ and $B$ are the elements of the
$\Q$-vector space $\Hom^0(A,B):=\Q\otimes_\Z\Hom(A,B)$, where
$\Hom(A,B)$ denotes the usual $\Z$-module of homomorphisms between
abelian varieties. In particular,  isogenies become
isomorphisms in our category. We will use the
standard $\Hom,\End$ and $\Aut$ to denote morphisms up to isogeny
(we will suppress the superscripts to lighten the notation). As usual, a field as an index means morphisms
defined over that field. The notation $A\sim B$ will indicate that the abelian varieties
$A$ and $B$ are isogenous, and $A\sim_K B$ that they are isogenous
with an isogeny defined over the field $K$.

Let $f=\sum a_nq^n$ be a weight-two newform for the congruence
subgroup $\Gamma_1(N)$, and let $E_f=\Q(\{a_n\})$ be the number
field generated by its Fourier coefficients. Shimura attaches to
$f$ an abelian variety $A_f$ defined over $\Q$, constructed
as a subvariety of the Jacobian $J_1(N)$ of the modular curve
$X_1(N)$. The variety $A_f$ has dimension equal to the degree
$[E_f:\Q]$, the algebra $\End_\Q(A_f)$ of its endomorphisms
defined over $\Q$ is isomorphic to the number field $E_f$, and the
$L$-function $L(A_f/\Q;s)$ is \emph{equivalent} (i.e., coincides up to a
finite number of Euler factors) with the product $\prod
L(\acc\sigma f;s)$ of the $L$-functions of the Galois conjugates
of the form $f$  \cite[Section 7.5]{shimura-book}.

The abelian varieties $A_f$ and, more generally, all abelian
varieties $A/\Q$ that are isogenous over $\Q$ to  $A_f$ for some $f$ are
 known as \emph{modular abelian varieties}. This modularity
property has many important consequences and applications, for
example:
\begin{itemize}\itemsep=0pt
\item modularity implies the Hasse conjecture for the $L$-function $L(A/\Q;s)$;
\item the theory of Heegner points and results by Gross--Zagier and Kolyvagin produce
    partial results for the variety $A/\Q$
    in the direction of the Birch and Swinnerton-Dyer conjecture;
\item the modularity of Frey's elliptic curves can be used to solve
    certain Diophantine equations of Fermat type.
\end{itemize}
As a result, modular abelian varieties have
been intensively studied and exploited in the last decades. In
practice, one can easily compute and work with modular forms and
the corresponding modular abelian varieties thanks to the powerful
tool provided by the theory of modular symbols: see \cite{cremona}
for elliptic curves and \cite{stein} for arbitrary dimension. The
computer systems {\tt Magma} and {\tt Sage} include packages
 that are able to perform many explicit
computations with those objects.

In the other direction, one would like to characterize the modularity of a given variety $A/\Q$.
In the one-dimensional case, the Shimura--Taniyama conjecture
predicted that every elliptic curve over $\Q$ is modular,
and its proof was completed in \cite{br-co-di-ta} by Breuil, Conrad, Diamond and Taylor, using generalizations and
variants of the ideas and techniques of Wiles \cite{wiles} and Taylor--Wiles  \cite{taylor-wiles}.
In \cite{ribet-avQ}  Ribet introduced the concept of a variety of $\GL_2$-type
as an abelian variety $A/\Q$ for which $\End_\Q(A)$ is a number field of degree equal to the dimension of $A$.
He generalized Shimura--Taniyama by conjecturing that every variety of $\GL_2$-type
is modular over $\Q$, and proved that this fact would be a consequence of Serre's conjecture
on the modularity of $2$-dimensional mod $p$ Galois representations \cite[Theorem 4.4]{ribet-avQ}.
After the  proof of Serre's conjecture by Khare and Wintenberger \cite[Theorem 9.1]{kwI}, we now know that modularity
of an abelian variety over $\Q$ is equivalent to the property of being of $\GL_2$-type.

The abelian varieties of $\GL_2$-type are not absolutely simple in
general: they factor up to isogeny as products of varieties
defined over number fields. After some work done by  Elkies in the
one-dimensional case and by  Ribet in general, in \cite{pyle}  Pyle
gave a characterization of the abelian varieties defined over
number fields that appear in the absolute decomposition of abelian
varieties of $\GL_2$-type, which depends on the structure of their
endomorphism algebras and on the existence of isogenies between their Galois
conjugates. She uses the name \emph{building blocks} for them
(also known as elliptic $\Q$-curves in the one-dimensional case)
and  generalizes the use of the term \emph{modular abelian
variety} to refer to an abelian variety $B/K$ defined over a
number field $K$ that is isogenous to a factor of some $A_f$. In
this sense, Ribet's generalization of the Shimura--Taniyama conjecture
predicts that building blocks and absolutely simple modular
abelian varieties are the same things up to isogeny, and now,
after the work of Khare and Wintenberger, this is known to be a fact.

In this way one gets a new family of abelian varieties defined
over number fields and is tempted to use their modularity in the
same way as was done over $\Q$. But the key property of
modularity that is used for many of the applications is the relation of the
$L$-function of the variety with modular forms, and this property
is  invariant only by isogeny defined over the base field. Hence, to use modularity in the context of abelian varieties
over number fields, it is natural to single out a class of abelian varieties that we will term `strongly modular.'

\begin{definition}
Let $K$ be a number field. An abelian variety $B/K$  is
\emph{strongly modular over $K$} if its $L$-function $L(B/K;s)$
is equivalent to a product of $L$-series of newforms over $\Q$ for  $\Gamma_1(N)$.
\end{definition}

In the next section we give a characterization
(Proposition \ref{proposition-characterization-strong-modularity-GL2-over-number-fields})
of strong modularity as the property that the restriction of scalars to $\Q$
is a product of $\GL_2$-type varieties.
This can be taken as an alternative definition of strong modularity.

It is important to notice that strong modularity is a much more restrictive concept
than that of being just a building block.
For example, consider an elliptic curve over the field of rational numbers $B/\Q$.
Then, the extension of scalars of $B$ to any number field $K$ is a building block,
but $B/K$ is strongly modular only in the case that $K/\Q$ is an abelian extension:
in this case we have that $L(B/K;s)=\prod_{\chi}L(f\otimes \chi;s)$,
where $f$ is the newform such that $L(B/\Q;s)=L(f;s)$
and $\chi$ runs over the group of complex characters of $\Gal(K/\Q)$.
Another less trivial example is the following: consider the elliptic curve $B$
defined over the number field $K=\Q(\sqrt{-3})$ by an equation of the type:
$$Y^2=X^3+4aX^2+2(a^2+\sqrt{-3}b)X,\qquad a,b\in\Q.$$
Then $B/K$ is a building block but it is not strongly modular;
in fact, no curve isogenous to $B$ over $\Qb$ and defined over the field $K$ can be strongly modular.
If we enlarge the base field $K$ to $M=\Q(\sqrt{-3},\sqrt{-2})$
then the curve $B/M$ obtained by extension of scalars is still not strongly modular
but its quadratic twist corresponding to the extension $M(\sqrt{2+\sqrt6})$ is strongly modular.
This example is obtained in \cite[\S 3]{luis-jorge} using the results of \cite{quer-LMS} and \cite{quer-JA},
and is applied to the study of a family of Diophantine equations of Fermat type.
The strong modularity of the twisted model is fundamental for this application.
Examples of this same type but in higher dimension are given in the last section of this paper
as application of our results.

Strongly modular abelian varieties over number fields can be studied as easily as modular abelian varieties over $\Q$.
For instance, their $L$-series satisfy the Hasse conjecture,  Heegner cycles 
and Gross--Zagier type results can be used  to compute rational points on them,
and  they can be used for solving Diophantine equations
as in the example cited in the previous paragraph.

Of course, for abelian varieties over number fields there are other concepts of modularity
that associate the $L$-function of the variety with more general modular and automorphic forms.
The case of abelian varieties over totally real number fields
and their relation to Hilbert modular forms is perhaps the best understood situation,
and many results generalizing the classical ones are known.

The purpose of the present paper is to understand and characterize
the abelian varieties over number fields
whose $L$-functions can be obtained in terms of classical elliptic modular forms over $\Q$
for congruence subgroups $\Gamma_1(N)$,
in the precise sense of our definition of strong modularity. This characterization is given in our main theorem, Theorem
\ref{main-theorem}, in terms of the Galois group of the number field and of certain Galois cohomology class attached to
the variety. The case of non-CM
one-dimensional building blocks, i.e., of $\Q$-curves without
complex multiplication, was already studied in \cite{quer-LMS}.
This paper is a generalization of \cite{quer-LMS} to arbitrary
dimension; many ideas and tools  we will use here were already
introduced in \cite{ribet-avQ}, \cite{pyle} and \cite{quer-LMS}.

It would be desirable to have also methods to produce modular
abelian varieties without the use of modular forms, with the
modularity being just a consequence of their arithmetic and geometric
properties. The one-dimensional case is of course the most well
known, and for this reason it has been the main source of
applications up to now: all elliptic curves over $\Q$ are strongly modular.
Over number fields all CM elliptic curves are modular, and  Elkies
proved in \cite{elkies} that non-CM modular elliptic curves are
parameterized up to isogeny by the (non-cusp, non-CM) rational
points of the modular curves $X^*(N)$ quotient of $X_0(N)$ by the
group of Atkin-Lehner involutions, for squarefree values of $N$. A
method to explicitly work-out this parametrization when the moduli
variety $X^*(N)$ is of genus zero or one (this happens for $81$
values of $N$ in the range 2-238) is given in \cite{go-la-par},
and standard conjectures suggest that only a finite number of
isogeny classes is missing out of these genus $\leq1$ moduli
curves. Once one has a modular elliptic curve the results in
\cite{quer-LMS} can be used to characterize the curves in its
isogeny class that are strongly modular, and explicit equations
for them may be obtained using the methods developed in
\cite{quer-JA}.

The non-modular construction of higher dimensional strongly modular abelian varieties
has been performed up to now only over $\Q$. They appear either
 as Jacobians of curves $C/\Q$ for which one is able to
write down enough endomorphisms defined over $\Q$, or  as the
varieties corresponding to certain points in moduli varieties,
especially Shimura curves. In the last section we consider a
family of modular abelian surfaces over number fields that are
strictly non-rational examples, in the sense that they cannot be
obtained by just extending scalars from varieties defined over
$\Q$, and we see how the main theorem of this paper can be used to
distinguish (and to produce) strongly modular examples.

\subsubsection*{Remark on complex multiplication.}
 Shimura proved that a variety $A_f$ has a factor with complex
multiplication if and only if it is isogenous to a power of an
elliptic curve with complex multiplication. This is also
equivalent to the fact that the newform $f$ admits a twist by a
quadratic character whose kernel is  the field of complex
multiplication of the corresponding elliptic curve. The CM case
 requires a special treatment and, except for section 2, in
which the results hold in complete generality, for the rest of the
paper we will tacitly assume that all abelian varieties
considered have no CM-factors up to isogeny; when necessary we
will stress this condition by saying ``\emph{non-CM abelian variety.}''

\subsubsection*{Acknowledgments} Guitart wants to thank the Max Planck Institute for 
Mathematics for their hospitality and financial support during his stay at the Institute, where part of the
present work has been carried out.

\section{Strong modularity and $\GL_2$-type}

The purpose of this section is to show  that an abelian variety $B/K$ is strongly modular if and only if
 the abelian variety $A/\Q$ obtained by restriction of scalars
$A=\Res_{K/\Q}(B)$ is isogenous over $\Q$ to a product of abelian varieties of $\GL_2$-type.
Due to the fact that this last property is the one that plays a key role in this paper,
many statements become simpler if we enlarge the definition of
$\GL_2$-type to include varieties that are not simple over $\Q$.

\begin{definition}
An abelian variety $A/\Q$ is of \emph{$\GL_2$-type} if $\End_\Q(A)$
contains a commutative semisimple sub-$\Q$-algebra of $\Q$-dimension equal to $\dim A$.
\end{definition}

Note that the standard use of ``$\GL_2$-type'' in the literature corresponds to the varieties
that satisfy our definition and are simple.
The relation between the two concepts is analogous to the relation between arbitrary
CM-abelian varieties and the simple ones (cf. \cite[p.~29]{milne-cm}).

\begin{lemma}\label{lemma-GL2-type-is-invariant-by-products}
An abelian variety is of $\GL_2$-type if and only if
all its $\Q$-simple factors are of $\GL_2$-type.
\end{lemma}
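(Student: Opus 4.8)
The plan is to work with the isogeny decomposition over $\Q$. Write $A\sim_\Q\prod_i A_i^{n_i}$ with the $A_i$ pairwise non-isogenous $\Q$-simple abelian varieties; this identifies $\End_\Q(A)\cong\prod_i M_{n_i}(D_i)$, where $D_i=\End_\Q(A_i)$ is a division algebra, of index $r_i$ over its centre $F_i$, and I set $f_i=[F_i:\Q]$. Since $H_1(A_i,\Q)$ is a free $D_i$-module, $r_i^2f_i=\dim_\Q D_i$ divides $2\dim A_i$; write $2\dim A_i=c_ir_i^2f_i$ with $c_i\in\Z_{\geq 1}$. The first point to establish is that $c_ir_i\geq 2$ for every $i$: if $c_ir_i=1$ then $r_i=c_i=1$, so $\End_\Q(A_i)$ is a field of degree $2\dim A_i$ acting on $A_i$, which forces $A_i$ to be an abelian variety with complex multiplication defined over $\Q$, and that is impossible. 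It follows that every subfield of $D_i$ has degree at most $r_if_i=2\dim A_i/(c_ir_i)\leq\dim A_i$, and since a maximal subfield of $D_i$ has degree exactly $r_if_i$, the simple variety $A_i$ is of $\GL_2$-type if and only if $c_ir_i=2$.

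The second ingredient is elementary algebra: a commutative semisimple $\Q$-subalgebra of a matrix algebra $M_n(D)$ over a division $\Q$-algebra $D$ (with centre of degree $f$ over $\Q$ and index $r$) has $\Q$-dimension at most $nrf$. I would prove this by splitting the subalgebra according to its idempotents, using that each $e_jM_n(D)e_j$ is again a matrix algebra over $D$, together with the standard fact that a subfield of a central simple algebra of degree $\delta$ over its centre has degree dividing $\delta$ over the centre (reducing a subfield not containing the centre to this case by passing to its compositum with the centre).

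Granting these, the \emph{if} direction is direct: if every $A_i$ is of $\GL_2$-type, choose for each $i$ a subfield $L_i\subseteq D_i$ with $[L_i:\Q]=\dim A_i$; then taking in each factor $M_{n_i}(D_i)$ the diagonal matrices with entries in $L_i$ gives a commutative semisimple subalgebra of $\prod_i M_{n_i}(D_i)=\End_\Q(A)$ of $\Q$-dimension $\sum_i n_i\dim A_i=\dim A$, so $A$ is of $\GL_2$-type. For the \emph{only if} direction, suppose $E\subseteq\End_\Q(A)$ is commutative semisimple with $\dim_\Q E=\dim A$. Projecting to the $i$-th factor, $\pi_i(E)\subseteq M_{n_i}(D_i)$ is again commutative semisimple (a quotient of the semisimple algebra $E$), and $E$ embeds into $\prod_i\pi_i(E)$; hence
\[\dim A=\dim_\Q E\leq\sum_i\dim_\Q\pi_i(E)\leq\sum_i n_ir_if_i=\sum_i\frac{2n_i\dim A_i}{c_ir_i}\leq\sum_i n_i\dim A_i=\dim A.\]
Every inequality is therefore an equality; the third one forces $c_ir_i=2$ for all $i$, and by the first paragraph this says precisely that each $A_i$ is of $\GL_2$-type.

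The main obstacle is the inequality $c_ir_i\geq 2$ in the first step: this is the only place where it matters that the base field is $\Q$ (over a larger number field a $\Q$-simple factor can acquire extra complex multiplication), and it rests on the classical fact that an abelian variety over $\Q$ cannot have its complex multiplication defined over $\Q$ — equivalently, on Shimura's theorem that a field of degree $2\dim A$ acting on $A$ must be a CM field, together with the obstruction to realising the corresponding reflex CM type over $\Q$. The algebraic estimate of the second step is routine, but needs a little care with subfields that do not contain the centre.
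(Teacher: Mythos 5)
Your proof is correct, and the overall skeleton (decompose into simple factors, bound the dimension of a commutative semisimple subalgebra of $\prod_iM_{n_i}(D_i)$ by the reduced degree, close a chain of inequalities) matches the paper's. The one genuine difference is the source of the key per-factor dimension bound. You work with $H_1(A_i,\Q)$, which gives the divisibility $r_i^2f_i\mid 2\dim A_i$, and you must then eliminate the boundary case $c_ir_i=1$ by appealing to the theorem that an abelian variety over $\Q$ cannot have its CM defined over $\Q$ (you correctly flag this as the one place where the base field enters). The paper instead uses $\operatorname{Lie}(A_i/\Q)$, a faithful $\End_\Q(A_i)$-module of $\Q$-dimension $\dim A_i$; the resulting inequality $[\End_\Q(A_i):\Q]_{\operatorname{red}}\leq\dim A_i$, with equality only for products of matrix algebras over fields, subsumes the no-CM-over-$\Q$ fact as a special case and also excludes the possibility $(c_i,r_i)=(1,2)$ that your characterization ``$\GL_2$-type iff $c_ir_i=2$'' leaves formally open — so the paper recovers directly that $\End_\Q$ of a simple $\GL_2$-type variety over $\Q$ is a field of degree $\dim A_i$, which is the classical definition. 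What the Lie route buys is that the exclusion of CM and the exclusion of quaternionic $\End_\Q$ both fall out of the same module-theoretic estimate with no external theorem; your $H_1$ route trades that compactness for a slightly more explicit bookkeeping via $c_i$. The ``if'' constructions differ only cosmetically (your diagonal subalgebra $\prod_iL_i^{n_i}$ versus the paper's embedding of a degree-$r_i$ extension $E_i/F_i$ into $M_{r_i}(F_i)$).
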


\begin{proof}
For an abelian variety $A/\Q$ let $A\sim_\Q A_1^{r_1}\times\cdots\times A_n^{r_n}$
be its decomposition up to $\Q$-isogeny into $\Q$-simple factors.
Put $\D_i=\End_\Q(A_i)$, let $F_i$ be the center of $\D_i$ and let $t_i=[\D_i:F_i]^{1/2}$
be its index.
The decomposition of $\End_\Q(A)$ into simple algebras is
\begin{equation}\label{decomposition-of-end-A}
\End_\Q(A)\simeq\M_{r_1}(\D_1)\times \cdots\times\M_{r_n}(\D_n),
\end{equation}
and the reduced degree of $\End_\Q(A)$ over $\Q$ is
$[\End_\Q(A):\Q]_{\operatorname{red}}=\sum r_it_i[F_i:\Q]$.

If every $A_i$ is of $\GL_2$-type then $\D_i=F_i$ has degree $[F_i:\Q]=\dim A_i$.
Every field extension $E_i/F_i$ of degree $r_i$ can be embedded in the matrix ring $\M_{r_i}(F_i)$
and the product $\prod E_i$ is a commutative semisimple subalgebra of $\End_\Q(A)$ of dimension
$\sum [E_i:\Q]=\sum r_i[F_i:\Q]=\sum r_i\dim A_i=\dim A$,
hence $A$ is of $\GL_2$-type.

For the converse we will make use of the following basic facts about associative
algebras: for any semisimple $k$-algebra \def\A{\mathcal A}$\A$
the maximal commutative semisimple subalgebras $E\subseteq\mathcal A$ have dimension
$\dim_k E=[\A:k]_{\operatorname{red}}$,
and for every faithful $\A$-module $M$ one has
$\dim_k M\geq[\A:k]_{\operatorname{red}}$ with the equality being
possible only if all the simple subalgebras of $\A$ are matrix algebras
over fields (cf. \cite[Propositions 1.3 and 1.2]{milne-cm}).
The second fact applied to the space of tangent vectors $\operatorname{Lie}(B/\Q)$
of an abelian variety $B/\Q$  gives the inequality
$[\End_\Q(B):\Q]_{\operatorname{red}}\leq\dim B=\dim \operatorname{Lie}(B/\Q)$.

Assume now that $A$ is of $\GL_2$-type.
Let $E\subseteq\End_\Q(A)$ be a commutative semisimple subalgebra with $[E:\Q]=\dim A$.
Then by the previous results
$$\dim A=[E:\Q]\leq[\End_\Q(A):\Q]_{\operatorname{red}}\leq\dim A.$$
Hence each step is an equality and 
$\End_\Q(A)$ is product of matrix algebras over fields; i.e.,  $t_i=1$ for all $i$.

Now, using the inequalities $[\End_\Q(A_i):\Q]_{\operatorname{red}}\leq\dim A_i$ for every $i$,
we have
$$\dim A=[\End_\Q(A):\Q]_{\operatorname{red}}=\sum r_i[F_i:\Q]
    =\sum r_i[\End_\Q(A_i):\Q]_{\operatorname{red}}\leq\sum r_i\dim A_i=\dim A;$$
 the equality at each summand follows,
from which one deduces $[F_i:\Q]=\dim A_i$ for all $i$
and so all simple factors $A_i$ are of $\GL_2$-type.
\end{proof}

\begin{proposition}\label{proposition-characterization-strong-modularity-GL2-over-Q}
An abelian variety $A/\Q$ is strongly modular over $\Q$
if and only if it is of $\GL_2$-type.
\end{proposition}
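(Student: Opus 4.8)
The plan is to leverage Lemma 2.2 to reduce to the $\Q$-simple case, and then to invoke the theorem of Khare–Wintenberger quoted in the introduction: an abelian variety over $\Q$ is modular if and only if it is of $\GL_2$-type (in the classical, i.e. simple, sense). So the real content is the compatibility of the newform-decomposition of the $L$-function with products, in both directions.

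First I would treat the ``if'' direction. Suppose $A/\Q$ is of $\GL_2$-type. By Lemma~\ref{lemma-GL2-type-is-invariant-by-products} every $\Q$-simple factor $A_i$ of $A$ is of $\GL_2$-type in the classical sense, hence by the Khare–Wintenberger result each $A_i$ is $\Q$-isogenous to a Shimura variety $A_{f_i}$ for a weight-two newform $f_i$ on some $\Gamma_1(N_i)$. By Shimura's theorem \cite[Section 7.5]{shimura-book}, $L(A_{f_i}/\Q;s)$ is equivalent to $\prod_\sigma L(\acc\sigma{f_i};s)$, a product of $L$-series of newforms. Since $L(A/\Q;s)$ is, up to finitely many Euler factors, the product $\prod_i L(A_i/\Q;s)^{r_i}$ and $L$-equivalence is preserved under finite products, $L(A/\Q;s)$ is equivalent to a product of newform $L$-series; thus $A$ is strongly modular over $\Q$.

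For the ``only if'' direction, suppose $L(A/\Q;s)$ is equivalent to $\prod_j L(g_j;s)$ for weight-two newforms $g_j$ on congruence subgroups $\Gamma_1(M_j)$. Each newform $g_j$ lies in a Galois orbit, and the product over each orbit is $L(A_{g_j}/\Q;s)$ (up to finitely many factors); grouping the $g_j$ by orbit, one obtains that $L(A/\Q;s)$ is equivalent to $\prod_k L(A_{h_k}/\Q;s)$ for finitely many newforms $h_k$, i.e. to $L(\prod_k A_{h_k}/\Q;s)$. By the Faltings isogeny theorem, an abelian variety over $\Q$ is determined up to $\Q$-isogeny by its $L$-function (equivalently, by the associated system of $\ell$-adic representations, which is pinned down by almost all Euler factors); hence $A\sim_\Q \prod_k A_{h_k}$. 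Each $A_{h_k}$ is of $\GL_2$-type since $\End_\Q(A_{h_k})\supseteq E_{h_k}$ has degree $\dim A_{h_k}$, so by Lemma~\ref{lemma-GL2-type-is-invariant-by-products} the product $\prod_k A_{h_k}$, and therefore $A$, is of $\GL_2$-type.

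The main obstacle — or at least the point requiring care — is the bookkeeping in the ``only if'' direction: one must make sure that an arbitrary finite multiset of newform $L$-series assembles into $L$-series of Shimura varieties after regrouping Galois orbits, and that the passage from $L$-function equivalence to $\Q$-isogeny is legitimate. The latter uses that two abelian varieties over $\Q$ with $L$-functions agreeing at almost all places have isomorphic rational $\ell$-adic Tate modules for each $\ell$, whence are $\Q$-isogenous by Faltings; this is where the ``equivalence'' (agreement up to finitely many Euler factors) in the definition of strong modularity is exactly strong enough. The ``if'' direction is comparatively routine once Lemma~\ref{lemma-GL2-type-is-invariant-by-products} and the modularity theorem are in hand.
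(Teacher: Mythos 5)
Your ``if'' direction is essentially identical to the paper's: reduce to $\Q$-simple factors via Lemma~\ref{lemma-GL2-type-is-invariant-by-products}, invoke Ribet and Khare--Wintenberger (plus Faltings) to identify each $\Q$-simple $\GL_2$-type factor with some $A_{f_i}$ up to $\Q$-isogeny, and multiply $L$-functions.

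Your ``only if'' direction, however, has a genuine gap at precisely the point you flag as ``requiring care,'' and you do not actually close it. You write: ``Each newform $g_j$ lies in a Galois orbit, and the product over each orbit is $L(A_{g_j}/\Q;s)$; grouping the $g_j$ by orbit, one obtains\ldots'' This presupposes that the multiset $\{g_j\}$ is closed under Galois conjugation, so that the orbits are complete. Nothing in the definition of strong modularity guarantees this: the hypothesis is only that $L(A/\Q;s)$ is equivalent to \emph{some} product of newform $L$-series. To deduce Galois-closedness you would need to argue that $\prod_j L(g_j;s)$ having rational coefficients forces $\{g_j\}=\{\acc\sigma g_j\}$ as multisets for every $\sigma\in\Aut(\C)$, and this in turn rests on a uniqueness-of-factorization statement for products of newform $L$-series (a consequence of strong multiplicity one, or of irreducibility of the attached $\lambda$-adic Galois representations plus Chebotarev). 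That fact is true but nontrivial, and without it your regrouping step does not follow.

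The paper sidesteps this issue entirely by a cleverer manoeuvre: set $E$ equal to the compositum of the coefficient fields $E_i$, $m=[E:\Q]$, and consider $L(A^m/\Q;s)=L(A/\Q;s)^m=\prod_{\sigma\in\Sigma_E}\acc\sigma L(A/\Q;s)$, using rationality of the coefficients to insert the Galois twists. Expanding the right-hand side gives $\prod_i L(A_{f_i}/\Q;s)^{m_i}$ with $m_i=[E:E_i]$, so that Galois-closedness is manufactured by construction rather than asserted. Faltings then yields $A^m\sim_\Q\prod A_{f_i}^{m_i}$, and uniqueness of the Poincar\'e decomposition into $\Q$-simple factors gives $A\sim_\Q\prod A_{f_i}^{e_i}$, which is of $\GL_2$-type by Lemma~\ref{lemma-GL2-type-is-invariant-by-products}. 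If you prefer to keep your route, you should supply the multiplicity-one argument explicitly; otherwise the $m$-th power trick is the cleaner path.
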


\begin{proof}
If $A/\Q$ is of $\GL_2$-type, by the previous lemma we have that
$A\sim_\Q A_1^{r_1}\times\cdots\times A_n^{r_n}$, where the
$A_i$'s are $\Q$-simple abelian varieties of $\GL_2$-type.  Results of Ribet \cite[Theorem 4.4]{ribet-avQ}
and Khare and Wintenberger \cite[Theorem 10.1]{kwI}, together with Faltings's isogeny theorem imply the existence of
newforms $f_i$ such that
$A_i\sim_\Q A_{f_i}$. Then $L(A/\Q,s)\sim\prod
L(A_i/\Q,s)^{r_i}\sim\prod L(A_{f_i}/\Q,s)^{r_i}$, where $\sim$ denotes equivalence of $L$-functions. Since each
$L(A_{f_i}/\Q,s)$ is the product of the $L$-functions of the
newforms that are Galois conjugates of $f_i$, the variety $A$ is
strongly modular over $\Q$.

Now we prove the converse.
Let $A/\Q$ be a strongly modular abelian variety over $\Q$,
and let $f_1,\dots,f_n$ be newforms such that $L(A/\Q,s)=\prod L(f_i,s)$.
Let $E_i$ be the field of Fourier coefficients of $f_i$,
and denote by $E=E_1 E_2\cdots E_n$ the composition.
Let $m=[E:\Q]$ and $m_i=[E:E_i]$, and denote by $\Sigma_E$ and $\Sigma_{E_i}$
the corresponding sets of  complex embeddings.
For every index $i$, the restriction of all the elements of $\Sigma_E$ to the field $E_i$
gives $m_i$ copies of every element of $\Sigma_{E_i}$.

We will make use of the following notation:
if $S=\sum a_n n^{-s}$ is a Dirichlet series with $a_n\in \C$ and $\sigma\in\Aut(\C)$,
we denote by $\acc\sigma S$ the series $\sum\acc\sigma a_n n^{-s}$;
that is, the series obtained by applying $\sigma$ to the coefficients $a_n$.
Note that since $L(A/\Q,s)$ has rational coefficients we have that $\acc\sigma L(A/\Q,s)=L(A/\Q,s).$
One has
\begin{multline*}
L(A^m/\Q,s)=L(A/\Q,s)^m=\prod_{\sigma\in\Sigma_E}\acc\sigma L(A/\Q,s)
    =\prod_{\sigma\in\Sigma_E}\prod_{i=1}^n\acc\sigma L(f_i,s)\\
=\prod_{i=1}^n\prod_{\sigma\in\Sigma_E}L(\acc\sigma f_i,s)
    =\prod_{i=1}^n\prod_{\sigma\in\Sigma_{E_i}}L(\acc\sigma f_i,s)^{m_i}
    =\prod_{i=1}^n L(A_{f_i}/\Q,s)^{m_i}=L\left(\bigg(\prod_{i=1}^n A_{f_i}^{m_i}\bigg)/\Q,s\right).
\end{multline*}
Then by Faltings's  isogeny theorem the varieties $A^m$ and $\prod A_{f_i}^{m_i}$
are isogenous over $\Q$. By the uniqueness of decomposition up to $\Q$-isogeny into the product of $\Q$-simple
varieties
it follows that $A$ is isogenous over $\Q$
to a product $\prod A_{f_i}^{e_i}$ for some exponents $e_i\geq0$, and thus it is of $\GL_2$-type.
\end{proof}

For other number fields strong modularity can be reduced to
that of the restriction of scalars.

\begin{proposition}\label{proposition-characterization-strong-modularity-GL2-over-number-fields}
An abelian variety $B/K$ over a number field $K$ is strongly modular over $K$
if and only if $\Res_{K/\Q}(B/K)$ is of $\GL_2$-type.
\end{proposition}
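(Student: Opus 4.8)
The plan is to reduce the statement to Proposition~\ref{proposition-characterization-strong-modularity-GL2-over-Q} by relating the $L$-function of $B/K$ with that of its restriction of scalars $A=\Res_{K/\Q}(B)$. The fundamental fact I would invoke is the standard compatibility of $L$-functions with restriction of scalars: $L(\Res_{K/\Q}(B)/\Q;s)=L(B/K;s)$, up to finitely many Euler factors (this is an Artin-induction/Mackey type property at the level of the $\ell$-adic representations, $V_\ell(\Res_{K/\Q}B)\cong \Ind_{G_K}^{G_\Q}V_\ell(B)$). Granting this, the proof is essentially immediate in both directions, so the work is in quoting the correct references for that compatibility and for the fact that $\Res_{K/\Q}$ of an abelian variety over a number field is again an abelian variety over $\Q$ of dimension $[K:\Q]\dim B$.

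Concretely, first I would recall that $A=\Res_{K/\Q}(B)$ exists as an abelian variety over $\Q$ (Weil restriction of scalars in the category up to isogeny) and that $L(A/\Q;s)$ is equivalent to $L(B/K;s)$. Then, if $A$ is of $\GL_2$-type, Proposition~\ref{proposition-characterization-strong-modularity-GL2-over-Q} says $A$ is strongly modular over $\Q$, i.e.\ $L(A/\Q;s)$ is equivalent to a product of $L$-series of newforms for $\Gamma_1(N)$; combining with $L(A/\Q;s)\sim L(B/K;s)$ gives exactly that $B/K$ is strongly modular over $K$. Conversely, if $B/K$ is strongly modular, then $L(A/\Q;s)\sim L(B/K;s)$ is a product of newform $L$-series, so $A/\Q$ is strongly modular over $\Q$, hence of $\GL_2$-type by the same proposition.

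The only genuine subtlety — and the step I expect to be the main obstacle to a fully rigorous writeup — is making the equivalence $L(\Res_{K/\Q}(B)/\Q;s)\sim L(B/K;s)$ precise in the ``up to finitely many Euler factors'' sense, including what happens at ramified primes and primes of bad reduction. For the argument one only needs equivalence, not equality, so it suffices to observe that at all but finitely many rational primes $p$ the induced representation decomposes according to the factorization of $p$ in $\O_K$ and each local Euler factor of $L(B/K;s)$ at a prime above $p$ reappears as a local Euler factor of $L(A/\Q;s)$, with matching Frobenius eigenvalues. I would cite the standard treatment of this (e.g.\ Milne's article on arithmetic duality or the discussion of $L$-functions and restriction of scalars in the literature on $\Q$-curves) rather than reprove it. With that in hand the proof is a two-line consequence of the previous proposition, which is presumably why the authors state it as a corollary-style proposition immediately after.
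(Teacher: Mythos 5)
Your proof is correct and follows exactly the paper's route: reduce to Proposition~\ref{proposition-characterization-strong-modularity-GL2-over-Q} via the compatibility $L(B/K;s)=L(\Res_{K/\Q}(B)/\Q;s)$. The paper actually asserts this as an equality rather than just equivalence up to finitely many Euler factors, but for the purposes of the proof your more cautious formulation suffices.
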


\begin{proof}
The equality of $L$-functions $L(B/K,s)=L((\Res_{K/\Q}B)/\Q,s)$
implies that $B$ is strongly modular over $K$ if and only if
$\Res_{K/\Q}B$ is strongly modular over $\Q$,
and by the previous proposition this is the case if and only if
$\Res_{K/\Q}B$ is of $\GL_2$-type.
\end{proof}

Combining Lemma \ref{lemma-GL2-type-is-invariant-by-products}
with this proposition one immediately obtains the following consequence.

\begin{corollary}
An abelian variety is strongly modular over a number field $K$ if and only if
all its $K$-simple factors are strongly modular over $K$.
\end{corollary}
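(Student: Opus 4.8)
The plan is to reduce the statement to the two ingredients just established: the characterization of strong modularity over $K$ in terms of $\Res_{K/\Q}(B/K)$ being of $\GL_2$-type (Proposition \ref{proposition-characterization-strong-modularity-GL2-over-number-fields}), and the fact that being of $\GL_2$-type is detected on $\Q$-simple factors (Lemma \ref{lemma-GL2-type-is-invariant-by-products}). The one geometric input needed to glue these together is the compatibility of restriction of scalars with products up to isogeny: if $B\sim_K B_1\times\cdots\times B_r$ with each $B_j/K$ a $K$-simple abelian variety, then $\Res_{K/\Q}(B)\sim_\Q \prod_{j=1}^r \Res_{K/\Q}(B_j)$. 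This is standard, since $\Res_{K/\Q}$ is an additive functor on the category of abelian varieties up to isogeny (it sends finite products to finite products and preserves isogenies).

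First I would write $B\sim_K \prod_{j} B_j$ with the $B_j$ the $K$-simple factors, and apply the displayed compatibility to get $\Res_{K/\Q}(B)\sim_\Q \prod_j \Res_{K/\Q}(B_j)$. By Lemma \ref{lemma-GL2-type-is-invariant-by-products}, the variety $\Res_{K/\Q}(B)$ is of $\GL_2$-type if and only if every $\Q$-simple factor of it is of $\GL_2$-type; but the set of $\Q$-simple factors of a finite product is the union of the sets of $\Q$-simple factors of the individual pieces, so $\Res_{K/\Q}(B)$ is of $\GL_2$-type if and only if $\Res_{K/\Q}(B_j)$ is of $\GL_2$-type for every $j$. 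Invoking Proposition \ref{proposition-characterization-strong-modularity-GL2-over-number-fields} once for $B$ and once for each $B_j$, this says exactly that $B/K$ is strongly modular over $K$ if and only if each $B_j/K$ is strongly modular over $K$, which is the assertion.

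The only point requiring a word of justification is the claim that $\Res_{K/\Q}$ commutes with products up to isogeny; I expect this to be the ``hard'' part only in the bookkeeping sense, as it is a formal consequence of $\Res_{K/\Q}$ being a functor that is additive on the isogeny category (it is right adjoint to base change, hence preserves products, and it visibly carries isogenies to isogenies by counting dimensions, $\dim\Res_{K/\Q}(B)=[K:\Q]\dim B$). Alternatively one can avoid mentioning restriction of scalars by going directly through $L$-functions: $L(B/K;s)=\prod_j L(B_j/K;s)$, so a factorization of the left side into $L$-series of newforms for $\Gamma_1(N)$ produces, after matching $\Q$-simple factors via Faltings as in the proof of Proposition \ref{proposition-characterization-strong-modularity-GL2-over-Q}, such a factorization for each $L(B_j/K;s)$, and conversely the product of strongly modular $L$-series is strongly modular. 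Either route is short; I would present the functorial one as the cleaner argument.
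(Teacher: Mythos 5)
Your proof is correct and is essentially the paper's argument, which dispatches the corollary in one line by "combining Lemma \ref{lemma-GL2-type-is-invariant-by-products} with this proposition." You merely make explicit the (standard and necessary) compatibility $\Res_{K/\Q}(\prod_j B_j)\sim_\Q\prod_j\Res_{K/\Q}(B_j)$ and the uniqueness of decomposition into $\Q$-simple factors, which the paper leaves tacit.
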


\section{$\Q$-abelian varieties}

The absolutely simple factors up to isogeny of non-CM abelian varieties
of $\GL_2$-type are studied by  Ribet in \cite{ribet-avQ} and by
 Pyle in \cite{pyle}. A common property of the non-CM abelian varieties of
$\GL_2$-type and of their simple factors is that the object
consisting of the variety together with  its endomorphisms has as
field of moduli the field of rational numbers. In order to deal
with this property the following definitions are useful. For a
given abelian variety $B/\Qb$ and Galois automorphism $\sigma\in
G_\Q$, an isogeny $\mu_\sigma\colon\accl\sigma B\to B$ is said to
be \emph{compatible with the endomorphisms of $B$} if the map
$\End(B)\rightarrow \End(B)\colon\psi\mapsto\mu_\sigma\circ\acc\sigma\psi\circ\mu_\sigma^{-1}$ is
the identity, i.e., if the
 diagram 
$$\diagram\accl\sigma B\rto^{\mu_\sigma}\dto_{\acc\sigma\psi}&B\dto^\psi\\
    \accl\sigma B\rto_{\mu_\sigma}&B\enddiagram$$
is commutative for every $\psi\in\End(B)$.

\begin{definition}[{\cite[p.~194]{pyle}}]\label{definition-Q-variety}
A \emph{$\Q$-abelian variety} (or just \emph{$\Q$-variety} for short) is an abelian variety $B/\Qb$ such that
for every $\sigma\in G_\Q$ there exists an isogeny $\mu_\sigma\colon\accl\sigma B\to B$ compatible with $\End(B)$.
\end{definition}

Let $F$ be the center of the endomorphism algebra $\End(B)$. It is
easily seen that if the isogeny $\mu_\sigma\colon\accl\sigma B\to
B$ is compatible with $\End(B)$, then all  isogenies between these two
varieties compatible with $\End(B)$ are the maps of the form $\psi\circ\mu_\sigma$ for
$\psi\in F^*$; also, if $B$ is a $\Q$-abelian variety, then all
its endomorphisms belonging to the center $F$ are defined over
every field of definition for $B$.

\subsection*{ The cocycle class $[c_B]$.}
Let $B/\Qb$ be a $\Q$-variety. Since the variety is defined over some number field
one can always choose a set of isogenies $\{\mu_\sigma\colon \accl\sigma B \rightarrow  B\}_{\sigma\in G_\Q}$
compatible with the endomorphisms of $B$ that is locally constant. Let $c_B$ be the map defined as
\begin{equation}\label{definition-c_B}
c_B\colon G_\Q\times G_\Q\to F^*,\qquad
    c_B(\sigma,\tau)=\mu_\sigma\circ\acc\sigma\mu_\tau\circ\mu_{\sigma\tau}^{-1}.
\end{equation}
In the following lemma we state some of the properties of $c_B$, which can be straightforwardly checked.

\begin{lemma}
The map $c_B$ is a well-defined continuous $2$-cocycle on $G_\Q$ with values in the group $F^*$,
considered as a $G_\Q$-module with trivial action,
and the cohomology class $[c_B]\in H^2(G_\Q,F^*)$ does not depend on the locally
constant set of isogenies used to define the cocycle $c_B$.
Moreover, the class $[c_B]$  depends only on the $\Qb$-isogeny class of $B$.
\end{lemma}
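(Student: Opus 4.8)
The plan is to verify the three assertions in turn: that $c_B$ is a $2$-cocycle, that its class is independent of the chosen locally constant system $\{\mu_\sigma\}$, and that it depends only on the $\overline\Q$-isogeny class of $B$. For the cocycle property, first I would check that the expression $c_B(\sigma,\tau)=\mu_\sigma\circ\acc\sigma\mu_\tau\circ\mu_{\sigma\tau}^{-1}$ actually lands in $F^*$: each factor is an isogeny $\accl{\sigma\tau}B\to B$ compatible with $\End(B)$ (using that $\acc\sigma\mu_\tau\colon\accl{\sigma\tau}B\to\accl\sigma B$ is $\acc\sigma$ of a compatible isogeny, hence compatible with $\End(\accl\sigma B)$ in the appropriate sense), so by the remark following Definition~\ref{definition-Q-variety} the composite $\mu_\sigma\circ\acc\sigma\mu_\tau$ differs from $\mu_{\sigma\tau}$ by an element of $F^*$, namely $c_B(\sigma,\tau)$. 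The cocycle identity $\acc\sigma c_B(\tau,\rho)\cdot c_B(\sigma,\tau\rho)=c_B(\sigma\tau,\rho)\cdot c_B(\sigma,\tau)$ — which for the trivial action reads $c_B(\tau,\rho)\,c_B(\sigma,\tau\rho)=c_B(\sigma\tau,\rho)\,c_B(\sigma,\tau)$ — is a direct expansion: both sides equal $\mu_\sigma\circ\acc\sigma\mu_\tau\circ\accl{\sigma\tau}\mu_\rho\circ\mu_{\sigma\tau\rho}^{-1}$ after cancelling the inner $\mu_{\sigma\tau}^{\pm1}$ and $\mu_{\tau\rho}^{\pm1}$ terms, where one uses that elements of $F^*$ commute with the isogenies (since the $\mu$'s are compatible with $\End(B)$, hence with its center) and that $\acc\sigma$ is a ring homomorphism $\End(\accl\tau B)\to\End(\accl{\sigma\tau}B)$. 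Continuity of $c_B$ follows from local constancy of $\sigma\mapsto\mu_\sigma$ together with continuity of the Galois action.

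Next, independence of the class $[c_B]$ from the choice of locally constant system: if $\{\mu'_\sigma\}$ is another such system, then by the same remark $\mu'_\sigma=\lambda_\sigma\circ\mu_\sigma$ for a unique $\lambda_\sigma\in F^*$, and the map $\sigma\mapsto\lambda_\sigma$ is locally constant, hence a continuous cochain in $C^1(G_\Q,F^*)$. A short computation gives $c'_B(\sigma,\tau)=\lambda_\sigma\cdot\acc\sigma\lambda_\tau\cdot\lambda_{\sigma\tau}^{-1}\cdot c_B(\sigma,\tau)=(\partial\lambda)(\sigma,\tau)\cdot c_B(\sigma,\tau)$, using the trivial action to write $\acc\sigma\lambda_\tau=\lambda_\tau$; thus $c'_B$ and $c_B$ differ by a coboundary and $[c'_B]=[c_B]$ in $H^2(G_\Q,F^*)$.

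Finally, for dependence only on the isogeny class: let $B'/\overline\Q$ be isogenous to $B$ over $\overline\Q$ via an isogeny $\nu\colon B\to B'$. Conjugation by $\nu$ identifies $\End(B)$ with $\End(B')$ and hence their centers $F\cong F'$ compatibly with the $G_\Q$-action (which is trivial on both); under this identification, if $\{\mu_\sigma\colon\accl\sigma B\to B\}$ is a locally constant compatible system for $B$, then $\{\mu'_\sigma:=\nu\circ\mu_\sigma\circ(\acc\sigma\nu)^{-1}\colon\accl\sigma B'\to B'\}$ is a locally constant system for $B'$, and one checks it is compatible with $\End(B')$ by transporting the commuting-diagram condition through $\nu$. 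Computing $c_{B'}$ with this system, the inner $\nu$'s telescope: $c_{B'}(\sigma,\tau)=\nu\circ\mu_\sigma\circ\acc\sigma\mu_\tau\circ\mu_{\sigma\tau}^{-1}\circ\nu^{-1}$, which under the identification $F'\cong F$ is exactly $c_B(\sigma,\tau)$. Hence $[c_{B'}]=[c_B]$. The main subtlety — more bookkeeping than genuine obstacle — is keeping straight how $\acc\sigma$ acts on morphisms and verifying that "compatible with $\End(B)$" is preserved under the operations $\psi\mapsto\acc\sigma\psi$, conjugation by $\nu$, and composition; once these functorialities are pinned down, every identity above is a formal cancellation.
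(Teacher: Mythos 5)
Your proof is correct and carries out exactly the ``straightforward check'' the paper leaves to the reader (the paper gives no proof of this lemma, merely asserting it follows from a direct verification). All three points — the cocycle identity, independence of the chosen system $\{\mu_\sigma\}$, and invariance under $\overline{\Q}$-isogeny via $\mu'_\sigma:=\nu\circ\mu_\sigma\circ(\acc\sigma\nu)^{-1}$ — are handled along the expected lines.

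Two small points of wording are worth tightening. First, in the well-definedness paragraph, the elements that are compatible isogenies $\accl{\sigma\tau}B\to B$ are $\mu_\sigma\circ\acc\sigma\mu_\tau$ and $\mu_{\sigma\tau}$ (not ``each factor'' of the triple composite — $\mu_\sigma$ alone, for instance, has the wrong source), and it is the comparison of these two that lands $c_B$ in $F^*$; you clearly intend this, but the sentence reads as if you mean the three individual factors. Second, the phrase ``using the trivial action to write $\acc\sigma\lambda_\tau=\lambda_\tau$'' conflates two distinct things: the trivial $G_\Q$-action on the abstract module $F^*$ (a declaration) and the geometric Galois action on $\lambda_\tau$ as a morphism of abelian varieties (not trivial: it produces an element of $\End(\accl\sigma B)$). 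What actually makes the coboundary computation go through is the compatibility relation $\mu_\sigma\circ\acc\sigma\lambda_\tau=\lambda_\tau\circ\mu_\sigma$, which you do invoke earlier in spirit; it is cleaner to cite that identity directly here rather than appeal to ``the trivial action''. Neither issue is a genuine gap, and the remainder — the telescoping computation for $c_{B'}$, the verification that the $\mu'_\sigma$ are compatible with $\End(B')$, and the observation about local constancy — is complete.
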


This invariant $[c_B]$ gives the obstruction to descend up to isogeny the variety and its
endomorphisms over a given number field.
The following result is stated as Proposition 5.2 in \cite{pyle} for $\Q$-abelian varieties
that are building blocks,
but in the proof given there the structure of the endomorphism algebra plays no role,
so we state it here in full generality.

\begin{proposition}[Ribet--Pyle]\label{proposition-cB-restriction}
Let $B/\Qb$ be a $\Q$-variety and let $K$ be a number field.
There exists an abelian variety defined over $K$ and with all the endomorphisms defined over $K$
that is isogenous to $B$ if and only if $[c_B]$ belongs to the kernel of the restriction map
$\Res\colon H^2(G_\Q,F^*)\to H^2(G_K,F^*)$.
\end{proposition}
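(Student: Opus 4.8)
The plan is to translate the problem into one about Galois descent. First I would recall the general principle: if $B/\Qb$ has $\Q$ as field of moduli together with its endomorphisms, then choosing isogenies $\mu_\sigma\colon\accl\sigma B\to B$ compatible with $\End(B)$ is exactly the data one needs to try to build a $1$-cocycle producing a descended model, and the obstruction to rectifying the $\mu_\sigma$ into an honest cocycle is precisely the class $[c_B]$. So the two directions should run as follows. For the ``only if'' direction, suppose $B'/K$ is isogenous to $B$ with all endomorphisms defined over $K$, say via an isogeny $\nu\colon B'\to B$ over $\Qb$; for $\sigma\in G_K$ we get $\accl\sigma\nu\colon B'=\accl\sigma B'\to\accl\sigma B$, and then $\nu\circ(\accl\sigma\nu)^{-1}\colon\accl\sigma B\to B$ is an isogeny. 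One checks it is compatible with $\End(B)$ (because the endomorphisms of $B'$ are $G_K$-invariant and transport through $\nu$), so it differs from $\mu_\sigma$ by an element of $F^*$; defining $d\colon G_K\to F^*$ by $\mu_\sigma=d(\sigma)\cdot(\nu\circ(\accl\sigma\nu)^{-1})$ and computing the coboundary, one finds $c_B(\sigma,\tau)=d(\sigma)\,d(\tau)\,d(\sigma\tau)^{-1}$ on $G_K$ (using that $\sigma\mapsto\nu\circ(\accl\sigma\nu)^{-1}$ is a genuine cocycle because $\nu$ is a single morphism), so $\Res[c_B]$ is trivial in $H^2(G_K,F^*)$.

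For the ``if'' direction, assume $\Res[c_B]=1$, so there is a continuous map $d\colon G_K\to F^*$ with $c_B(\sigma,\tau)=d(\sigma)\,d(\tau)\,d(\sigma\tau)^{-1}$ for $\sigma,\tau\in G_K$. Replace $\mu_\sigma$ by $\mu_\sigma':=d(\sigma)^{-1}\mu_\sigma$ for $\sigma\in G_K$; these are still compatible with $\End(B)$ (scaling by a central unit does not change the conjugation action), and now $\mu'_\sigma\circ\accl\sigma\mu'_\tau=\mu'_{\sigma\tau}$, i.e. $\{\mu'_\sigma\}_{\sigma\in G_K}$ is a strict cocycle of isogenies. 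After enlarging the finite field of definition of $B$ and clearing denominators so that the $\mu'_\sigma$ become actual morphisms of abelian varieties (an isogeny is invertible in the isogeny category, and a cocycle condition up to isogeny can be arranged to hold on the nose by composing with a suitable isogeny $B\to B$, or simply by working with the effective model), one applies descent: a cocycle $\{\mu'_\sigma\}$ defines a form $B'/K$ of $B$, i.e. an abelian variety over $K$ together with a $\Qb$-isomorphism (isogeny) $B'\to B$ transforming the natural Galois action into the twisted one. Concretely one can use Weil's descent criterion, or the fact that $\Res_{K'/K}$ of a suitable model followed by projection realizes $B'$. Finally, compatibility of the $\mu'_\sigma$ with $\End(B)$ guarantees that under the isogeny $B'\to B$ every element of $\End(B)$ is $G_K$-invariant, hence descends to an endomorphism of $B'$ defined over $K$; so $B'$ has all its endomorphisms over $K$.

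The main obstacle I expect is the passage from ``cocycle of isogenies up to isogeny'' to an actual descent datum in the category of abelian varieties (or the isogeny category) over $K$ — that is, making precise in which category the form $B'$ lives and why Galois descent applies there. One has to be a little careful: $c_B$ and the $\mu_\sigma$ live in the isogeny category, the cocycle relation after twisting holds in the isogeny category, and effective descent for abelian varieties along $\Qb/K$ (or along a finite sub-extension over which everything is defined) must be invoked in a form valid there; the cleanest route is probably to fix a finite Galois $L/K$ over which $B$ and all the finitely many relevant $\mu'_\sigma$ are defined, note the cocycle factors through $\Gal(L/K)$, and apply descent for quasi-projective varieties with $\Gal(L/K)$-action (Weil), then observe the group structure and the endomorphisms descend by functoriality. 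Since the statement is quoted from Pyle with the remark that the endomorphism structure plays no role, I would, in a first pass, simply cite \cite[Proposition 5.2]{pyle} and indicate that the argument is the standard descent argument sketched above, emphasising only the point that nowhere is the shape of $\End(B)$ used beyond the existence of its center $F$.
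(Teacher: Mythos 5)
The paper itself gives no proof of this proposition; it simply cites Pyle's Proposition~5.2 and observes that her argument (stated for building blocks) uses nothing about $\End(B)$ beyond its center $F$, so the statement holds as written. Your ``only if'' direction is complete and correct: for an isogeny $\nu\colon B'\to B$ with $B'/K$ carrying all its endomorphisms over $K$, the family $\nu_\sigma:=\nu\circ(\accl\sigma\nu)^{-1}$ is a strict $1$-cocycle on $G_K$ of isogenies compatible with $\End(B)$, and comparing with the chosen $\mu_\sigma$ produces a continuous $d\colon G_K\to F^*$ exhibiting $c_B|_{G_K}$ as a coboundary.

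In the ``if'' direction, normalizing $\mu'_\sigma:=d(\sigma)^{-1}\mu_\sigma$ so that the resulting cocycle on $G_K$ vanishes is the right first move, but the gap you flag is genuine and is not repaired by ``clearing denominators'' or by appealing to Weil descent for quasi-projective varieties as you suggest: the $\mu'_\sigma$ are isogenies, not isomorphisms, so they do not constitute a Weil descent datum, and there is no ready-made effective descent theorem in the isogeny category. The argument in Ribet and Pyle sidesteps descent entirely by going through restriction of scalars. Choose a finite Galois extension $L/K$ over which $B$ and the finitely many distinct $\mu'_\sigma$ are defined and such that $\mu'_\sigma=1$ for all $\sigma\in G_L$ (possible since $\sigma\mapsto\mu'_\sigma$ is locally constant and can be normalized to vanish at the identity); put $H=\Gal(L/K)$ and $C=\Res_{L/K}(B)$. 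Exactly as in the proof of Proposition~\ref{proposition-endomorphism-algebra-restriction-of-scalars}, the $\mu'_s$ give $K$-rational elements $\lambda_s\in\End_K(C)$ spanning a twisted group algebra for $H$ over $F$, and by construction the twisting cocycle is now trivial, so one has a copy of the ordinary group algebra $F[H]$ inside $\End_K(C)$. The idempotent $e=\tfrac1{|H|}\sum_{s\in H}\lambda_s$ cuts out a $K$-variety $B':=eC$ that is $L$-isogenous to $B$, and compatibility of the $\mu'_s$ with $\End(B)$ makes $\End(B)$ commute with each $\lambda_s$, hence with $e$, hence act through $\End_K(B')$. This restriction-of-scalars construction, not a direct twist of $B$, is the missing ingredient; your closing appeal to Pyle is of course legitimate and mirrors what the paper does.
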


Generalizing the terminology introduced in \cite{quer-LMS} for
elliptic $\Q$-curves we will say that a $\Q$-variety $B$ is
\emph{completely defined} over a Galois number field $K$ if the
variety $B$ and all its endomorphisms are defined over $K$, and  there exist  isogenies
between Galois conjugates compatible with $\End(B)$ that are all of them defined over $K$. In this
case, let $\mu_s\colon\acc s B\to B$ be an isogeny compatible with the endomorphisms of $B$ for
each element $s\in G=\Gal(K/\Q)$. Then, as for
\eqref{definition-c_B}, one sees that the map
\begin{equation}\label{definition-cocycle-c_B/K-for-completely-defined}
c_{B/K}\colon G\times G\to F^*,\qquad c_{B/K}(s,t)=\mu_s\circ\accl s\mu_t\circ\mu_{st}^{-1}
\end{equation}
is a well-defined $2$-cocycle on $G$ with values in the trivial
$G$-module $F^*$ whose cohomology class $[c_{B/K}]\in
H^2(K/\Q,F^*)$ is an invariant of the $K$-isogeny class of the
variety $B$.

\begin{proposition}\label{proposition-cB-inflation}
Let $B/\Qb$ be a $\Q$-variety and let $K$ be a Galois number field.
There exists an abelian variety completely defined over $K$ that is isogenous to $B$
if and only if $[c_B]$ belongs to the image of the
inflation map $\Inf\colon H^2(K/\Q,F^*)\to H^2(G_\Q,F^*)$.

Moreover, if $[c_B]=\Inf([c])$ for some cocycle class $[c]\in H^2(K/\Q,F^*)$,
then there exists such a variety $B_0/K$ such that $[c_{B_0/K}]=[c]$.
\end{proposition}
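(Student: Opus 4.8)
The plan is to reduce everything to Proposition~\ref{proposition-cB-restriction}, which already produces \emph{some} variety isogenous to $B$ with variety and endomorphisms defined over $K$; the work is to upgrade ``defined over $K$'' to ``completely defined over $K$'' and to track the cocycle class. First I would treat the easy implication: if $B_0/K$ is completely defined over $K$ and isogenous to $B$, choose a $\Qb$-isogeny $\varphi\colon B_0\to B$ and a compatible system $\{\mu_s\colon \acc s B_0\to B_0\}_{s\in G}$ defined over $K$; then set $\mu_\sigma=\varphi\circ \mu_{\bar\sigma}\circ \acc\sigma(\varphi)^{-1}$ for $\sigma\in G_\Q$ (where $\bar\sigma$ is the image of $\sigma$ in $G=\Gal(K/\Q)$). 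One checks directly that these $\mu_\sigma$ are compatible with $\End(B)$, are locally constant (they factor through $G$), and that the resulting cocycle $c_B$ equals the inflation of $c_{B_0/K}$; hence $[c_B]=\Inf([c_{B_0/K}])$ lies in the image of inflation.

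For the converse, suppose $[c_B]=\Inf([c])$ with $[c]\in H^2(K/\Q,F^*)$. By inflation–restriction (the five-term exact sequence for $1\to G_K\to G_\Q\to G\to 1$ acting on the trivial module $F^*$), $\Inf([c])$ restricts to $0$ in $H^2(G_K,F^*)$, so Proposition~\ref{proposition-cB-restriction} gives an abelian variety $B_1/K$, with all endomorphisms defined over $K$, together with an isogeny $\nu\colon B_1\to B$. Transport the whole $\Q$-variety structure of $B$ to $B_1$: the isogenies $\mu'_\sigma:=\nu^{-1}\circ\mu_\sigma\circ\acc\sigma(\nu)\colon \acc\sigma B_1\to B_1$ are still compatible with $\End(B_1)$, and after adjusting by the local constancy of $\{\mu_\sigma\}$ and the fact that endomorphisms of $B_1$ are $K$-rational, one sees that for $\sigma$ fixing $K$ the isogeny $\mu'_\sigma$ is simply an element of $F^*$. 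Thus the cocycle $c_{B_1}$, after possibly modifying the $\mu'_\sigma$ by elements of $F^*$ so that $\mu'_\sigma$ depends only on $\bar\sigma\in G$, already ``lives on $G$'': it is inflated from a cocycle on $G$, i.e., $B_1$ is completely defined over $K$. This requires checking that the $K$-rationality of $\End(B_1)$ forces $\acc\tau(\mu'_\sigma)=\mu'_\sigma$ for $\tau\in G_K$ up to an $F^*$-factor, which is where local constancy of the original system is used.

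It remains to arrange $[c_{B_0/K}]=[c]$, not merely $\Inf([c_{B_0/K}])=\Inf([c])$; since inflation $H^2(K/\Q,F^*)\to H^2(G_\Q,F^*)$ is injective (again by inflation–restriction, as $H^1(G_K,F^*)=\Hom(G_K,F^*)$ does not enter the relevant portion here, or more directly because $F^*$ has trivial action and the obstruction vanishes), the class $[c_{B_1/K}]$ computed from the system on $G$ already equals $[c]$. The main obstacle, and the step I would be most careful about, is precisely the normalization in the previous paragraph: showing that after transporting the structure one may choose the descent isogenies $\mu'_s$ for $s\in G$ genuinely over $K$ and compatibly, so that the $2$-cocycle one writes down really is a cocycle on the finite group $G$ with the prescribed class, rather than something that only becomes well-defined after inflation. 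This is a bookkeeping argument with coboundaries in $F^*$, using that $F$ is defined over $K$ so that $F^*$-valued cochains on $G_\Q$ that are constant on $G_K$-cosets are exactly the inflated ones.
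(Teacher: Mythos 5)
Your forward direction is fine, and your use of Proposition~\ref{proposition-cB-restriction} to produce a variety $B_1/K$ with all endomorphisms $K$-rational is the right first step. The gap is in the passage from $B_1$ to a variety that is \emph{completely} defined over $K$, and it is not a bookkeeping gap: an essential idea is missing.

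You correctly observe that, for $\sigma\in G_K$, the isogeny $\mu'_\sigma$ lies in $F^*\subseteq\Aut(B_1)$, and (after normalizing so that $c_{B_1}$ vanishes on $G_K\times G_K$) that $\sigma\mapsto\mu'_\sigma$ is a homomorphism $\chi\colon G_K\to F^*$. You then assert that modifying the $\mu'_\sigma$ by elements of $F^*$ lets you arrange $\mu'_\sigma$ to depend only on $\bar\sigma$, hence to be defined over $K$. This is exactly where the argument fails. If you replace $\mu'_\sigma$ by $f(\sigma)\mu'_\sigma$ with $f(\sigma)\in F^*$, then for $\tau\in G_K$ you have $\acc\tau\bigl(f(\sigma)\mu'_\sigma\bigr)=f(\sigma)\,\acc\tau\mu'_\sigma$ because $F^*\subseteq\End_K(B_1)$, so the ratio $\acc\tau\mu'_\sigma\cdot(\mu'_\sigma)^{-1}$ is \emph{unchanged} by any $F^*$-valued scalar modification. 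A short computation with the normalized cocycle shows that this ratio equals $\chi(\tau)^{-1}\chi(\sigma^{-1}\tau\sigma)$, which is generally nontrivial; scalar adjustments cannot kill it. So you cannot make the $\mu'_\sigma$ $K$-rational by coboundary bookkeeping alone.

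The missing idea, and the heart of the paper's proof, is to \emph{twist} $B_1$ by the $1$-cocycle $\chi\colon G_K\to\Aut(B_1)$. One obtains a new abelian variety $B_0/K$ (isomorphic to $B_1$ over $\Qb$ but with a different $K$-structure) together with $\kappa\colon B_1\to B_0$ satisfying $\chi(\tau)=\kappa^{-1}\circ\acc\tau\kappa$ for $\tau\in G_K$. Setting $\nu_\sigma=\kappa\circ\mu'_\sigma\circ\acc\sigma\kappa^{-1}$, one checks: $\nu_\tau=1$ for $\tau\in G_K$, the cocycle relations then force $\nu_\sigma$ to depend only on $\bar\sigma$ and to be fixed by $G_K$, hence defined over $K$; and the resulting class on $G$ is the prescribed $[c]$. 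Without this twist your argument produces only a variety with $K$-rational endomorphisms, not $K$-rational conjugation isogenies.

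A secondary problem: your final remark that $\Inf\colon H^2(K/\Q,F^*)\to H^2(G_\Q,F^*)$ is injective is false in general (the transgression $H^1(G_K,F^*)^{G}\to H^2(K/\Q,F^*)$ need not vanish). Indeed, if inflation were injective the class $[c]$ would be determined by $[c_B]$, making the ``Moreover'' part of the proposition vacuous. The point of that clause is precisely that one can realize \emph{any} preimage $[c]$, and this is achieved by the twist construction rather than by any injectivity claim.
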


\begin{proof}
Since the image of the inflation lies in the kernel of the restriction,
by  Proposition~\ref{proposition-cB-restriction} we can suppose
that $B$ and all of its endomorphisms are defined over $K$.

Assume that $[c_B]=\Inf([c])$. Modifying the $2$-cocycle $c$
by a coboundary we can assume that it is normalized, i.e., takes
the value $c(1,1)=1$, and as a consequence of the cocycle
condition this implies that also $c(s,1)=c(1,s)=1$ for every
$s\in\Gal(K/\Q)$. Moreover, by changing the choice of
isogenies compatible with $\End(B)$ used in \eqref{definition-c_B}
to define $c_B$, we can also suppose that $\inf(c)$ coincides
with the cocycle $c_B$. This implies that $c_B(\sigma,\tau)=1$
whenever  $\sigma$ or $\tau$ belong to the subgroup $ G_K$. It
follows that the map $\sigma\mapsto\mu_\sigma$ is a one-cocycle on
the group $G_K$ with values in the group $\Aut(B)$, viewed as a
module with the natural Galois action of $G_K$, which is in fact
the trivial action since all the elements of $\End(B)$ are defined
over $K$.

Let $B_0$ be the twist of $B$ by this one-cocycle:
it is an abelian variety $B_0$ defined over $K$ together with an isogeny
$\kappa\colon B\to B_0$
such that $\mu_\sigma=\kappa^{-1}\circ{^\sigma \kappa}$ for all $\sigma\in G_K$.
We will see that this variety satisfies the conditions of the proposition.

Every endomorphism of $B_0$ is of the form $\kappa\circ\psi\circ\kappa^{-1}$
for some $\psi\in\End(B)$.
Since all endomorphisms of $B$ are defined over $K$ and the isogenies $\mu_\sigma$ are compatible with $\End(B)$,
for every $\sigma\in G_K$ one has
$$\acc\sigma(\kappa\circ\psi\circ\kappa^{-1})
    =\acc\sigma\kappa\circ\acc\sigma\psi\circ\acc\sigma\kappa^{-1}
    =\kappa\circ\mu_\sigma\circ\acc\sigma\psi\circ\mu_\sigma^{-1}\circ\kappa^{-1}
    =\kappa\circ\psi\circ\kappa^{-1}$$
and the endomorphisms of $B_0$ are defined over $K$ too.

A calculation shows that the maps $\nu_\sigma:=\kappa\circ\mu_\sigma\circ\acc\sigma\kappa^{-1}$
are  isogenies $\accl\sigma B_0\to B_0$ compatible with $\End(B_0)$ for every $\sigma\in G_\Q$,
and the relation of $\mu_\sigma=\kappa^{-1}\circ\acc\sigma\kappa$ for elements $\sigma\in G_K$
shows that $\nu_\sigma=1$ for the $\sigma$ fixing the field $K$.
The cocycle $c_{B_0}$ computed from this set of isogenies is related to $c_B$ by
$c_{B_0}(\sigma,\tau)=\kappa\circ c_B(\sigma,\tau)\circ\kappa^{-1}$ for all $\sigma,\tau\in G_\Q$.
Since $c_B$ is the inflation of $c$ and this cocycle is normalized, one deduces that
$c_{B_0}(\sigma,\tau)=1$ if either $\sigma$ or $\tau$ belong to the subgroup $G_K$.
Applying this fact to a pair $\sigma\in G_\Q$ and $\tau\in G_K$ one deduces that
$$c_{B_0}(\sigma,\tau)=\nu_\sigma\circ\acc\sigma\nu_\tau\circ\nu_{\sigma\tau}^{-1}
    =\nu_\sigma\circ\nu_{\sigma\tau}^{-1}=1\quad\Rightarrow\quad\nu_{\sigma\tau}=\nu_\sigma,$$
which means that $\nu_\sigma$  depends only on the action of $\sigma$ on $K$
(i.e., on the class of $\sigma$ modulo the normal subgroup $G_K$).
Now, applying the identity to a pair $\sigma\in G_K$ and $\tau\in G_\Q$ one has
$$\nu_\sigma\circ\acc\sigma\nu_\tau\circ\nu_{\sigma\tau}^{-1}
    =\acc\sigma\nu_\tau\circ\nu_{\sigma\tau}^{-1}
    =\acc\sigma\nu_\tau\circ\nu_{\tau}^{-1}=1\quad\Rightarrow\quad\acc\sigma\nu_\tau=\nu_\tau$$
proving that the  isogenies $\nu_\sigma$ are also defined over $K$
for every $\sigma\in G_\Q$.

Finally, for every element $s\in\Gal(K/\Q)$ let $\nu_s$  be the
isogeny $\nu_\sigma$ for any $\sigma\in G_\Q$ whose action on $K$
is given by the element $s$. In this way one obtains a set of
 isogenies compatible with  $\End(B_0)$ defined over the field $K$ and the
cocycle $c_{B_0/K}$ computed using this set is the cocycle
$c_{B_0/K}(s,t)=\kappa\circ c(s,t)\circ\kappa^{-1}$. Hence, under
the isomorphism between the centers of the endomorphisms of the varieties $B_0$ and $B$
given by conjugation by the isogeny $\kappa$ between them, the
cohomology class $[c_{B_0/K}]$ is the class $[c]$ we started
with.
\end{proof}

\subsection*{ Simple $\Q$-varieties of the first kind.}
Up to now we have put no restrictions in the $\Q$-abelian
varieties considered; in particular we have not assumed the
varieties to be simple. If $B\sim\prod B_i^{m_i}$ is the
decomposition up to isogeny into simple varieties then it is easy
to see that $B$ is a $\Q$-abelian variety if and only if all its
simple factors have this property, but we will not need this fact
here. For the study of the modular abelian varieties we are
interested in, the case of interest is when the center $F$ of
$\End(B)$ is a totally real number field; this is equivalent to
say that the previous decomposition has a unique simple factor
that is a variety of the first kind in the standard terminology
employed for the classification of simple abelian varieties
according to the type of their endomorphism algebras as algebras
with involution (cf. \cite[p. 193]{mu}). We also recall that the endomorphism algebras of
simple varieties of the first kind are either a totally real
number field (type I varieties) or a quaternion algebra over such
field, that may be either totally indefinite (type II) or totally
definite (type III). So we assume from now on that $F=Z(\End(B))$
is a totally real number field; we could also assume that $B$ is
absolutely simple but in fact that is not necessary and everything
below works for varieties that are powers of simple varieties of
the first kind.

Generalizing to our situation the definitions given
first by Ribet in \cite[p.~113]{ribet-cosdef} for simple varieties of type I,
and then by  Pyle in \cite[p.~218]{pyle} for building blocks,
one can define for every  isogeny $\mu_\sigma\colon\acc\sigma B\to B$ compatible with $\End(B)$
its ``degree'' $\delta(\mu_\sigma)$, which is a totally positive element of the field $F$
whose reduced norm as an element of the $\Q$-algebra $\End(B)$
is the usual degree of the isogeny $\mu_\sigma$.
Moreover, from the definition of this map and the fact that the Rosati involution
fixes the elements of the center $F$ one gets, exactly as in \cite{ribet-cosdef,pyle},
the following identity:
\begin{equation}\label{csquare-equal-degree}
c_B(\sigma,\tau)^2=\delta(\mu_\sigma)\delta(\mu_\tau)\delta(\mu_{\sigma\tau})^{-1},
\end{equation}
showing that the cohomology class $[c_B]$ belongs to the $2$-torsion subgroup
$H^2(G_\Q,F^*)[2]$.

Now, the structure of the group $H^2(G_\Q,F^*)[2]$ is particularly simple and
a number of consequences about fields of definition can be deduced just by looking at it.
As it is des\-cri\-bed in \cite[p.~114]{ribet-cosdef} (see also \cite[Section 2]{quer-MC}),
if one starts with any group isomorphism $F^*\simeq\{\pm1\}\times F^*/\{\pm1\}$,
and using basic facts of group cohomology, one obtains a decomposition
\begin{equation}\label{decomposition-sign-degree}
H^2(G_\Q,F^*)[2]\simeq H^2(G_\Q,\{\pm1\})\times\Hom(G_\Q,F^*/\{\pm1\}F^{*2})
\end{equation}
under which every $2$-torsion cohomology class $\xi\in H^2(G_\Q,F^*)$ 
has two components $\xi=(\xi_\pm,\overline\xi)$.
The \emph{sign} component $\xi_\pm\in H^2(G_\Q,\{\pm1\})\simeq\Br_2(\Q)$ is an element of
the $2$-torsion of the Brauer group of $\Q$.
The \emph{degree} component $\overline\xi$ is a group homomorphism $G_\Q\to F^*/\{\pm1\}F^{*2}$.
Note that the decomposition of the cohomology group depends on the decomposition
of the (trivial) $G_\Q$-module $F^*$ we have chosen,
but it is easy to see that the degree component does not depend on it,
and also that, for the classes $[c_B]$ attached to $\Q$-varieties $B$,
the degree component is just the map $\sigma\mapsto\delta(\mu_\sigma)\mod\{\pm1\}F^{*2}$
(hence the name).

Given an element $\xi\in H^2(G_\Q,F^*)[2]$ we will denote by $K_P$
the field fixed by the kernel of the degree component $\overline\xi$;
since this morphism takes values in a $2$-torsion group,
the field $K_P$ is an abelian extension of exponent $2$ of the field $\Q$.

\begin{proposition}
Let $B$ be a $\Q$-abelian variety with $F=Z(\End(B))$ a totally real number field.
Let $K_P$ be the field fixed by the kernel of $\overline{[c_B]}$.
\begin{enumerate}\itemsep=0pt
\item If $B_0\sim B$ with $B_0$ and $\End(B_0)$ defined over $K$ then $K_P\subseteq K$.
\item There exist isogenous varieties $B_0\sim B$ defined over fields of the form
    $K=K_P\cdot\Q(\sqrt a)$ for some $a\in\Q$, with $\End(B_0)$ also defined over $K$.
\item There exist isogenous varieties $B_0\sim B$ completely defined over fields
    of the form $K=K_P\cdot\Q(\sqrt a,\sqrt b)$ for some $a,b\in\Q$.
\end{enumerate}
\end{proposition}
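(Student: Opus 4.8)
The plan is to translate each of the three assertions into a statement about the class $[c_B]\in H^2(G_\Q,F^*)$ via Proposition~\ref{proposition-cB-restriction} (for (1) and (2)) and Proposition~\ref{proposition-cB-inflation} (for (3)), and then to exploit the decomposition~\eqref{decomposition-sign-degree}. By~\eqref{csquare-equal-degree} the class $[c_B]$ is $2$-torsion, so it has a sign component $[c_B]_\pm\in H^2(G_\Q,\{\pm1\})\simeq\Br_2(\Q)$ and a degree component $\overline{[c_B]}\in\Hom(G_\Q,F^*/\{\pm1\}F^{*2})$, the latter sending $\sigma$ to the class of $\delta(\mu_\sigma)$, so that $K_P$ is its fixed field by construction. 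I will use three elementary facts. \emph{(a)} The decomposition~\eqref{decomposition-sign-degree} is natural in the group, hence compatible with restriction to closed subgroups and with inflation from quotients $\Gal(K/\Q)$; on the degree part this naturality simply amounts to restricting the homomorphism, resp.\ to requiring it to factor through $\Gal(K/\Q)$. This rests on $H^1(-,F^*/\{\pm1\})=0$, which holds here because $F$ is totally real, so $F^*/\{\pm1\}$ is torsion-free (and continuous or finite-group homomorphisms into a torsion-free group are trivial). \emph{(b)} Every class in $\Br_2(\Q)$ is the class of a quaternion algebra $(a,b)_\Q$ for some $a,b\in\Q^*$; this class is split by $\Q(\sqrt a)$, and, being the cup product $\chi_a\cup\chi_b$ of the quadratic characters cutting out $\Q(\sqrt a)$ and $\Q(\sqrt b)$ (both of which factor through $\Gal(\Q(\sqrt a,\sqrt b)/\Q)$), it lies in the image of $\Inf\colon H^2(\Gal(\Q(\sqrt a,\sqrt b)/\Q),\{\pm1\})\to H^2(G_\Q,\{\pm1\})$. \emph{(c)} $K_P/\Q$, $K_P\cdot\Q(\sqrt a)/\Q$ and $K_P\cdot\Q(\sqrt a,\sqrt b)/\Q$ are all abelian of exponent $2$, hence Galois.

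\textbf{Parts (1) and (2).} For (1): if $B_0\sim B$ with $B_0$ and $\End(B_0)$ defined over $K$, then Proposition~\ref{proposition-cB-restriction} gives that $[c_B]$ dies under restriction to $G_K$; by~(a) its degree component then vanishes on $G_K$, i.e.\ $G_K\subseteq\ker\overline{[c_B]}=G_{K_P}$, which is exactly $K_P\subseteq K$. For (2): choose $a\in\Q^*$ such that $\Q(\sqrt a)$ splits the quaternion class $[c_B]_\pm$ (fact~(b); take $a=1$ if $[c_B]_\pm=0$), and set $K=K_P\cdot\Q(\sqrt a)$. Restricting $[c_B]$ to $G_K$ kills the sign component, since $G_K\subseteq G_{\Q(\sqrt a)}$, and kills the degree component, since $G_K\subseteq G_{K_P}=\ker\overline{[c_B]}$; hence $[c_B]$ restricts to $0$ on $G_K$, and Proposition~\ref{proposition-cB-restriction} furnishes the desired $B_0\sim B$ with $B_0$ and $\End(B_0)$ defined over $K$.

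\textbf{Part (3).} Write $[c_B]_\pm$ as the class of $(a,b)_\Q$ and put $K=K_P\cdot\Q(\sqrt a,\sqrt b)$, which is Galois over $\Q$ by~(c). By fact~(b) and transitivity of inflation (using $\Q(\sqrt a,\sqrt b)\subseteq K$), the sign component $[c_B]_\pm$ lies in the image of $\Inf\colon H^2(\Gal(K/\Q),\{\pm1\})\to H^2(G_\Q,\{\pm1\})$. The degree component $\overline{[c_B]}$ factors through $\Gal(K_P/\Q)$ by construction of $K_P$, hence through $\Gal(K/\Q)$ since $K_P\subseteq K$. By the naturality~(a) of the decomposition, $[c_B]$ itself lies in the image of $\Inf\colon H^2(K/\Q,F^*)\to H^2(G_\Q,F^*)$, and Proposition~\ref{proposition-cB-inflation} then produces a variety $B_0\sim B$ completely defined over $K$.

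The delicate point is concentrated in Part~(3): one cannot simply invoke ``$[c_B]$ restricts to $0$ on $G_K$'' to obtain a model \emph{completely} defined over $K$, because the image of inflation into $H^2(G_\Q,F^*)$ can be strictly smaller than the kernel of restriction to $G_K$ (the inflation--restriction sequence has an intervening term $H^1(\Gal(K/\Q),H^1(G_K,F^*))$). One must instead check directly that $[c_B]$ is inflated, and the substance of that check is the observation in~(b) that a quaternion class over $\Q$ is inflated from the biquadratic field that splits it. Verifying the naturality of~\eqref{decomposition-sign-degree} under inflation in~(a) is routine but should be made explicit, as it is what licenses handling the sign and degree components of $[c_B]$ independently.
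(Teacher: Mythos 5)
Your proof is correct and follows the same strategy as the paper: decompose $[c_B]$ into its sign and degree components via~\eqref{decomposition-sign-degree}, observe that the restriction (resp.\ inflation) condition on $[c_B]$ is equivalent to the corresponding condition on each component, handle the degree component directly from the definition of $K_P$, and handle the sign component by writing it as a quaternion algebra $(a,b)_\Q$ split by $\Q(\sqrt a)$ and inflated from $\Gal(\Q(\sqrt a,\sqrt b)/\Q)$. You are somewhat more explicit than the paper about why the decomposition is compatible with restriction and inflation (torsion-freeness of $F^*/\{\pm1\}$) and about why part~(3) genuinely requires an inflation argument rather than mere vanishing of restriction, but the underlying ideas are the same.
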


\begin{proof}
The decomposition \eqref{decomposition-sign-degree} has analogues
for the group $H^2(G_K,F^*)[2]$ for every number field $K$, and for the
group $H^2(K/\Q,F^*)[2]$ for every Galois number field $K$. The restriction and inflation maps respect the corresponding
decompositions. It follows that the class $[c_B]$ belongs to the
kernel (resp. the image) of the restriction to the first group
(resp. of the inflation from the second group) if and only if the
two components sign and degree belong to the corresponding kernels
(resp. images).

As for the degree component
$\overline{[c_B]}\in\Hom(G_\Q,F^*/\{\pm1\}F^{*2})$ each of the two
conditions either on the inflation or on the restriction are
equivalent to the fact that $K_P\subseteq K$. Every element of
$H^2(G_\Q,\{\pm1\})\simeq\Br_2(\Q)$ can be identified with a
quaternion algebra, that can be written as a pair $(a,b)_\Q$ with
$a,b\in\Q^*$, using the standard notation. Such an element always
can be trivialized by restriction to a (at most) quadratic
extension, for example the extension $\Q(\sqrt a)$. Also, this
element can be inflated from a cohomology class defined on the
Galois group of a (at most) biquadratic extension, for example the
extension $\Q(\sqrt a,\sqrt b)$ (see \cite[Section 2]{quer-JA}).
\end{proof}

In \cite{ribet-cosdef}  Ribet proved that for varieties with
$\End(B)=F$ a totally real number field of odd degree $[F:\Q]=\dim
B$ the field $K_P$ already trivializes the sign component, so that
there are always $B_0\sim B$ with $\End(B_0)$ defined over $K_P$.
It is easily seen that his argument works in fact for all
$\Q$-varieties of the first kind of odd dimension without the
assumption on the size of their endomorphisms. On the contrary,
for even dimension this fact is not true any more, as the modular
examples given in \cite{quer-MC} show.

\section{$K$-building blocks}

Since we want to study abelian varieties over a number field $K$
that are quotients up to $K$-isogeny of varieties of $\GL_2$-type,
we slightly adapt  the definition of ``building block'' given by  Pyle in \cite[p.~195]{pyle}, in order to keep track
of their
decomposition  over $K$ and not merely over $\Qb$.

\begin{definition}\label{definition-K-building-block}
Let $K/\Q$ be a Galois extension.
We say that a (non-CM) abelian variety $B/K$ is a \emph{$K$-building block} if
\begin{enumerate}\itemsep=0pt
\item $B$ is a $\Q$-variety admitting  isogenies $\mu_\sigma\colon\accl\sigma B\to B$ compatible with $\End(B)$
    defined over $K$ for every $\sigma\in G_\Q$, and
\item $\End_K(B)$ is a division algebra with center a number field $E$,
    having index $t\leq 2$ and reduced degree $t[E:\Q]=\dim B$.
\end{enumerate}
\end{definition}

\noindent
We note the following remarks:
\begin{itemize}\itemsep=0pt
\item The requirement that $\End_K(B)$ is a division algebra implies that
     $K$-building blocks are $K$-simple abelian varieties, but they may factor over larger fields.
\item The $\Q$-building blocks are the (non-CM) $\Q$-simple abelian varieties of $\GL_2$-type.
\item The $\Qb$-building blocks are the building blocks in the sense of Pyle's definition;
    we will also use this terminology without a prefix field sometimes.
\item For $\Qb$-building blocks the field $E$ is necessarily a totally real field,
    equal to the center of $\End(B)$.
    In general it may be either totally real or a CM-field,
    and the center $F$ of $\End(B)$ is a (necessarily totally real) subfield of $E$.
\item We do not require $B$ to have all its endomorphisms defined over $K$. This means that
    a $K$-building block is not necessarily a $\Q$-variety completely defined over $K$.
\end{itemize}

\begin{proposition}
Let $A/\Q$ be a $\Q$-simple abelian variety of $\GL_2$-type without CM and let $K/\Q$ be a Galois extension.
Then the extension of scalars $A/K$ is $K$-isogenous to a power $B^n$ of a $K$-building block $B/K$.
\end{proposition}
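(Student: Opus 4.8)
The plan is to analyze the extension of scalars $A/K$ by first decomposing it over $\Qb$ and then descending the relevant information back to $K$. Since $A/\Q$ is a $\Q$-simple abelian variety of $\GL_2$-type without CM, its base change $A/\Qb$ is isogenous over $\Qb$ to a power $C^r$ of an absolutely simple abelian variety $C$; moreover $C$ is a $\Qb$-building block in the sense of Pyle (this is precisely the content of Ribet's and Pyle's work recalled in the introduction). Because $A$ is defined over $\Q$, the $\Qb$-isogeny class of $C$ is stable under $G_\Q$, hence a fortiori under $G_K$, so $C$ is a $\Q$-variety and in particular a $K'$-variety for every Galois field $K'$. First I would fix a field $K_1 \supseteq K$, Galois over $\Q$, over which $C$ together with enough of its endomorphisms is defined, so that $C/K_1$ is $K_1$-simple and its $K_1$-endomorphism algebra is the expected division algebra of index $\leq 2$; such a field exists by the Ribet--Pyle descent result (Proposition \ref{proposition-cB-restriction}) applied after trivializing the class $[c_C]$ by restriction.

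Next I would descend from $K_1$ to $K$. The variety $A/K$ is $K$-simple or not; in either case, writing $A/K \sim_K \prod B_j^{n_j}$ for its decomposition into $K$-simple factors, each $B_j$ becomes, after extension to $K_1$, a power of $C$ (up to isogeny), since $A/\Qb \sim C^r$ and all the $B_j$ are $K$-forms of powers of $C$. The point is to show there is only one factor and to identify it as a $K$-building block. For $K$-simplicity: if $A/K$ were not $K$-simple, $\Res_{K_1/K}$ would split it further, but the key constraint is that $\End_K(A/K)$ must be ``as large as possible'' — more precisely, the Galois action of $\Gal(K/\Q)$ permutes the $K$-isogeny factors of $A/K$ transitively, because $A/\Q$ is $\Q$-simple (a nontrivial $\Gal(K/\Q)$-stable sub-product would descend to a proper $\Q$-abelian subvariety of $A$, contradicting $\Q$-simplicity). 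Transitivity of the Galois action on the factors $B_j^{n_j}$ forces all the $B_j$ to be $K$-isogenous Galois conjugates of one another; but $C$, being a $\Q$-variety, has all its Galois conjugates $\accl\sigma C$ isogenous to $C$ over $\Qb$, so in fact all the $B_j$ are $K$-isogenous to a single $B := B_1$, giving $A/K \sim_K B^n$ with $n = n_1$ (and the number of distinct Galois orbits is one).

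It then remains to verify that $B$ satisfies the two conditions of Definition \ref{definition-K-building-block}. Condition (1): $B$ is a $\Q$-variety because it is $\Qb$-isogenous to a power of $C$, which is a $\Q$-variety, and the property of being a $\Q$-variety is inherited by factors (as remarked in the ``Simple $\Q$-varieties'' subsection); the existence of isogenies $\mu_\sigma\colon\accl\sigma B \to B$ compatible with $\End(B)$ and defined over $K$ follows because $A/K$ and hence its factor $B$ is defined over $K$ with the $G_\Q/G_K$-action realized by honest morphisms over $K$ — concretely, the isogeny $A/K \sim_K B^n$ transports the natural isogenies $\accl\sigma A \to A$ (which exist over $\Q$, indeed over $K$, since $A$ is defined over $\Q$) to isogenies between $\accl\sigma B$ and $B$, and one checks compatibility with $\End(B)$ using that the original maps on $A$ are the identity on $\End_\Q(A)$. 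Condition (2): the structure of $\End_K(B)$ as a division algebra of index $t \leq 2$ with $t[E:\Q] = \dim B$ follows from matching $\End_K(A/K) \cong \M_n(\End_K(B))$ against the known $\Qb$-endomorphism algebra of $C^r$ and invoking the index bound from Pyle's theory of building blocks (the index of the endomorphism algebra of a building block is at most $2$), together with the $\GL_2$-type dimension count transported from $A$.

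The main obstacle I expect is the bookkeeping in the descent from $K_1$ to $K$ — specifically, proving that $A/K$ has a \emph{single} $K$-simple factor up to power (i.e., that the Galois orbit is a single class), and simultaneously controlling the endomorphism algebra $\End_K(B)$ precisely enough to pin down the index $t \leq 2$. The $\Q$-simplicity of $A$ gives transitivity of the $\Gal(K/\Q)$-action on factors, but turning that into ``one factor, not several isogenous ones'' requires using that the Galois conjugates of the $\Qb$-building block $C$ are all $\Qb$-isogenous (which is exactly the $\Q$-variety property), and then arguing that over $K$ these conjugate factors are already $K$-isogenous — this last step is where one must be careful, and it is essentially where the hypothesis that $K/\Q$ is Galois (so that conjugation is an honest action) is used.
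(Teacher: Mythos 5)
Your approach, via the absolute decomposition $A/\Qb \sim C^r$ and Galois descent, is genuinely different from the paper's, which works purely algebraically over $K$ with dimension counts on endomorphism algebras. However, your argument has a real gap at exactly the point you flag as requiring care, and that gap is not closed. You correctly observe that $\Q$-simplicity forces the $K$-isogeny factors $B_j$ to form a single $\Gal(K/\Q)$-orbit (a Galois-stable proper sub-product would descend to a proper $\Q$-subvariety). But being a Galois conjugate of $B_1$, i.e.\ $B_j \sim_K \accl{s_j}B_1$, does \emph{not} make $B_j$ $K$-isogenous to $B_1$. Your attempted bridge --- that $C$ is a $\Q$-variety, so its Galois conjugates are $\Qb$-isogenous, ``so in fact all the $B_j$ are $K$-isogenous to a single $B$'' --- is a non sequitur: $\Qb$-isogeny of the $B_j$ among themselves follows, but nothing in the argument descends that isogeny to $K$. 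This is precisely the step where the \emph{non-CM hypothesis must enter}, and your plan never uses it for the decomposition; that is a symptom that the route as written cannot work. (Without non-CM the conclusion is false: take $A=\Res_{M/\Q}E$ for $E/M$ with CM by the imaginary quadratic field $M$; then $A$ is $\Q$-simple of $\GL_2$-type, but $A/M$ can have two non-$M$-isogenous elliptic factors.)

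The paper's mechanism for getting a single isotypical factor is quite different and worth internalizing: $\End_\Q(A)$, a field of degree $\dim A$, acts on each isotypical factor $B_i^{n_i}$ of $A/K$, forcing $[\End_\Q(A):\Q] \mid 2n_i\dim B_i$; since $\sum n_i\dim B_i = \dim A = [\End_\Q(A):\Q]$, either $[\End_\Q(A):\Q]=n_i\dim B_i$ or $=2n_i\dim B_i$, and the second option would produce a CM factor, hence is excluded. This forces a unique $i$ and gives $A\sim_K B^n$ directly. The paper then proves $\End_\Q(A)$ is a maximal subfield of $\End_K(A)$ (again using non-CM), and from $\End_K(A)\simeq\M_n(\End_K(B))$ deduces $t[E:\Q]=\dim B$ and $t\leq 2$ via the action of $\End_K(B)$ on $H_1(B,\Q)$. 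Your sketch of condition (2) (``matching against the known $\Qb$-endomorphism algebra of $C^r$ and invoking the index bound from Pyle'') skips all of this and does not explain why the index bound over $\Qb$ controls the index over $K$. So the two genuine gaps are: (i) the descent from ``$\Qb$-isogenous Galois conjugates'' to ``a single $K$-isogeny class'', which needs the non-CM dimension argument, and (ii) the precise structure of $\End_K(B)$ in condition (2), which needs the maximal-subfield and $H_1$ arguments rather than an appeal to Pyle over $\Qb$.
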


\begin{proof}
Let $A\sim_K B_1^{n_1}\times\cdots \times B_r^{n_r}$ be the
decomposition up to $K$-isogeny into $K$-simple varieties. Since
$\End_\Q(A)$ is a subfield of $\End(A)$ it acts on each isotypical factor $B_i^{n_i}$, hence
$[\End_\Q(A):\Q]\ |\ 2n_i\dim B_i$. But $[\End_\Q(A):\Q]=\dim A
=\sum n_i \dim B_i$, and this implies that
either $[\End_\Q(A):\Q]=n_i\dim B_i$ or $[\End_\Q(A):\Q]=2n_i\dim B_i$ for each index.
The second case is not possible since we are assuming that no
subvariety of $A$ has CM, and so $[\End_\Q(A):\Q]=n_i\dim B_i$,
which implies that there is only one isotypical factor and $A\sim_K B^n$.

Next, we prove that $\End_\Q(A)$ is a maximal subfield of $\End_K(A)$ or, equivalently,
that $\End_\Q(A)$ is its own centralizer in $\End_K(A)$. Let $\varphi$
be an element of $\End_K(A)$ that commutes with $\End_\Q(A)$. The
image $\varphi(A)$ is isogenous to $B^r$ for some $r$. Since
$\varphi$ commutes with $\End_\Q(A)$, the field $\End_\Q(A)$ acts
on $B^r$, and this implies that $[\End_\Q(A):\Q]\, |\, 2 r\dim B$.
This  gives only two options: either $[\End_\Q(A):\Q]=r\dim B$ or
$[\End_\Q(A):\Q]=2r\dim B$. Again, the second is not allowed since
$B^r$ can not have CM. This means that $r=n$ and $\varphi$ is an
isogeny. Hence, $C(\End_\Q(A))$ is a field and therefore
$C(\End_\Q(A))=\End_\Q(A).$

Set $E=Z(\End_K(B))$ and let $t$ be the index of $\End_K(B)$. We
can prove now that $t[E:\Q]=\dim B$. This comes from the
decomposition $A\sim_K B^n$, which translates into an isomorphism
$\End_K(A)\simeq \M_n(\End_K(B))$. Since $\End_\Q(A)$ is a maximal
subfield of $\End_K(A)$ taking dimensions over $E$ we have that
$[\End_\Q(A):E] = n t $, and multiplying both sides of this
expression by $[E:\Q]$ it gives $[\End_\Q(A):\Q]=nt[E:\Q]$. Since
$[\End_\Q(A):\Q]=\dim A=n\dim B$ we see that $\dim B=t[E:\Q]$.

Since $B$ is $K$-simple, $\End_K(B)$ is a division algebra acting
on $H_1(B,\Q)$; therefore $[\End_K(B):\Q]\ | \ 2\dim B$. This
means that $t^2[E:\Q]\ | \ 2 \dim B $, and by the relation
$t[E:\Q]=\dim B$ we see that $t\dim B \ | \ 2 \dim B$, showing
that $t\leq 2$.

Finally, we have to show that $B$ is a $\Q$-variety with isogenies between Galois conjugates
defined over $K$. The argument is the same as in the
proof of  Proposition 1.4 in \cite{pyle}, but starting from an
isogeny between $A_K$ and $B^n$ defined over $K$. It only has to be noticed that the
isogenies $\alpha(\sigma)\colon A\to A$ that appear in that
proof are defined over $\Q$, and this implies that the
isogenies defined on page 195 of \cite{pyle} are defined over $K$.
\end{proof}

\begin{corollary}\label{corollary-necessity-of-K-bb}
If a $K$-simple variety is strongly modular over a Galois number field $K$,
then it is a $K$-building block.
\end{corollary}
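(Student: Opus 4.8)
The plan is to deduce this corollary directly from the two preceding results, combining the characterization of strong modularity via restriction of scalars with the structure theorem just proved for extensions of scalars of $\Q$-simple $\GL_2$-type varieties.

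First I would set up the situation. Let $B/K$ be a $K$-simple abelian variety that is strongly modular over the Galois number field $K$. By Proposition~\ref{proposition-characterization-strong-modularity-GL2-over-number-fields}, the restriction of scalars $A=\Res_{K/\Q}(B)$ is of $\GL_2$-type over $\Q$. By Lemma~\ref{lemma-GL2-type-is-invariant-by-products}, each $\Q$-simple factor of $A$ is again of $\GL_2$-type; since we are in the non-CM setting (as stipulated in the remark on complex multiplication), these factors are non-CM $\Q$-simple varieties of $\GL_2$-type. Now the standard relation between restriction of scalars and extension of scalars says that $B$ is $K$-isogenous to a subvariety of $A_K=(\Res_{K/\Q}B)\times_\Q K$; more precisely, $A_K$ decomposes up to $K$-isogeny as a product of the varieties $\acc{s}{B}$ for $s\in\Gal(K/\Q)$, so in particular $B$ occurs as a $K$-simple factor of $A_K$.

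Next I would apply the previous proposition to each $\Q$-simple factor $A_i$ of $A$: since $A_i$ is a non-CM $\Q$-simple variety of $\GL_2$-type, $(A_i)_K$ is $K$-isogenous to a power $B_i^{n_i}$ of a $K$-building block $B_i/K$. Hence $A_K$ is $K$-isogenous to a product of powers of the $K$-building blocks $B_i$. By the uniqueness of the decomposition up to $K$-isogeny into $K$-simple varieties, the $K$-simple variety $B$ must be $K$-isogenous to one of the $B_i$. Since being a $K$-building block is clearly preserved under $K$-isogeny (both conditions in Definition~\ref{definition-K-building-block} are isogeny-invariant: the $\Q$-variety property with isogenies compatible with $\End(B)$ defined over $K$ is isogeny-invariant by the Ribet--Pyle discussion, and the structure of $\End_K(B)$ as a division algebra with the prescribed index and reduced degree depends only on the $K$-isogeny class), we conclude that $B$ is itself a $K$-building block.

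The step I expect to require the most care is the bookkeeping connecting $\Res_{K/\Q}$ and extension of scalars — namely verifying that the $K$-simple factors of $(\Res_{K/\Q}B)_K$ are exactly the Galois conjugates $\acc{s}{B}$, so that $B$ really appears among the $B_i$. This is standard but one must be mildly careful about the non-CM hypothesis throughout, since both Lemma~\ref{lemma-GL2-type-is-invariant-by-products} and the preceding proposition about $K$-building blocks are invoked in the non-CM regime. Once that identification is in place, the corollary follows formally from uniqueness of the $K$-isogeny decomposition and the isogeny-invariance of the $K$-building block property.
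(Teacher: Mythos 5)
Your proposal is correct and follows essentially the same route as the paper: pass to $A=\Res_{K/\Q}(B)$, note it is of $\GL_2$-type, use $A_K\sim_K\prod_{s}\acc{s}{B}$ to realize $B$ as a $K$-simple factor of the extension of scalars of a $\Q$-simple $\GL_2$-type variety, and conclude via the preceding proposition. The paper's proof is terser (it leaves implicit the reduction to $\Q$-simple factors via Lemma~\ref{lemma-GL2-type-is-invariant-by-products} and the $K$-isogeny invariance of the $K$-building block property, both of which you spell out correctly), but there is no substantive difference in approach.
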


\begin{proof}
Let $B$ be a $K$-simple strongly modular variety.
By Proposition \ref{proposition-characterization-strong-modularity-GL2-over-Q},
 since $B$ is strongly modular over $K$, $\Res_{K/\Q}(B)$
is of $\GL_2$-type.
Since $\Res_{K/\Q}(B)\sim_K\prod_{s\in\Gal(K/\Q)}\accl s B$,
the variety $B$ is a $K$-simple factor  of
a $\Q$-simple variety of $\GL_2$-type,
and then it is a $K$-building block by the previous proposition.
\end{proof}

The converse of this corollary is not true: a $K$-building block
needs extra conditions to be strongly modular.
This conditions are related to a cohomology class $[c_{B/K}]$ attached to $B$ that
is defined in a similar way than \eqref{definition-cocycle-c_B/K-for-completely-defined} as follows.
Let $B$ be a $K$-building block over a Galois number field $K$;
put $G=\Gal(K/\Q)$,  $E=Z(\End_K(B))$ and $F=Z(\End(B))$.
Let $\{\mu_\sigma\}_{\sigma\in G_\Q}$ be a  set of isogenies compatible with $\End(B)$ defined over $K$.
For each $s\in G$ choose a representative $\tilde{s}$ in $G_\Q$, and define
$$c_{B/K}\colon G\times G\to E^*,\qquad c_{B/K}(s,t)
    =\mu_{\tilde s}\circ\acc{\tilde s}{\mu_{\tilde t}}\circ\mu_{\widetilde{st}}^{-1}.$$
When all the endomorphisms of $B$ are defined over $K$ this cocycle $c_{B/K}(s,t)$
coincides with \eqref{definition-cocycle-c_B/K-for-completely-defined}.
Since now we are not requiring the field $K$
to be a field of definition of all the endomorphisms of $B$,
we can only guarantee that $c_{B/K}(s,t)$ lies in $E^*$ but not in $F^*$ as
it happens when the variety is completely defined over $K$.
In the next lemma we state the main properties of this cocycle.

\begin{lemma}
The map $c_{B/K}$ is a 2-cocycle on $G$ with values in $E^*$,
considered as a module with trivial action.
The cohomology class $[c_{B/K}]\in H^2(K/\Q,E^*)$ depends
neither on the lift $s\mapsto \tilde{s}$ nor on the choice of the isogenies $\mu_{\tilde{s}}$.
Moreover, the inflation of $[c_{B/K}]$ to $H^2(G_\Q,E^*)$
coincides with the image of $[c_B]$ under the morphism $H^2( G_\Q,F^*)\to H^2( G_\Q,E^*)$
induced by the embedding $F^*\hookrightarrow E^*$.
\end{lemma}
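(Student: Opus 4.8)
The plan is to verify the three assertions in order, since each is a routine cocycle computation once the setup is unwound.

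\smallskip

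\noindent\emph{Step 1: $c_{B/K}$ is a $2$-cocycle with values in $E^*$.}
First I would check that $c_{B/K}(s,t)$ actually lands in $E^*$. The composite $\mu_{\tilde s}\circ\acc{\tilde s}{\mu_{\tilde t}}\circ\mu_{\widetilde{st}}^{-1}$ is an isogeny $B\to B$, i.e.\ an element of $\End_K(B)^*$; to see it lies in the center $E^*$ one uses that each $\mu_\sigma$ is compatible with $\End(B)$, so conjugation by $\mu_{\tilde s}\circ\acc{\tilde s}{\mu_{\tilde t}}\circ\mu_{\widetilde{st}}^{-1}$ acts on $\End(B)\supseteq\End_K(B)$ as the composite of the identity maps $\psi\mapsto\mu_{\tilde s}\acc{\tilde s}\psi\mu_{\tilde s}^{-1}$, hence is the identity on $\End_K(B)$; an element of a division algebra conjugation by which is trivial lies in the center. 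The cocycle identity $c_{B/K}(s,t)\,c_{B/K}(st,u)=\acc s{c_{B/K}(t,u)}\,c_{B/K}(s,tu)$ is then the same bookkeeping as for \eqref{definition-c_B}: expand both sides using the definition and the relation $\acc{\tilde s}{\acc{\tilde t}{\mu}}=\acc{\widetilde{st}}{\mu}$ up to replacing $\widetilde{st}$ by $\tilde s\tilde t$, which only changes things by an element of $G_K$ that acts trivially on the $E$-valued quantities; the $E^*$-valued factors commute (trivial $G$-action after landing in $E^*$), so everything telescopes. Continuity/local constancy is inherited from that of the chosen $\{\mu_\sigma\}$.

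\smallskip

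\noindent\emph{Step 2: independence of the lift $s\mapsto\tilde s$ and of the isogenies $\mu_{\tilde s}$.}
For the isogenies: any other choice $\mu_\sigma'$ compatible with $\End(B)$ differs from $\mu_\sigma$ by an element $\lambda_\sigma\in F^*\subseteq E^*$, by the remark following Definition~\ref{definition-Q-variety}; substituting $\mu_\sigma'=\lambda_\sigma\mu_\sigma$ into the formula and using that the $\lambda$'s are central and $G$-fixed shows $c_{B/K}'$ and $c_{B/K}$ differ by the coboundary $(s,t)\mapsto\lambda_{\tilde s}\acc{\tilde s}{\lambda_{\tilde t}}\lambda_{\widetilde{st}}^{-1}=\lambda_{\tilde s}\lambda_{\tilde t}\lambda_{\widetilde{st}}^{-1}$, which only depends on $s,t$ because $\lambda$ is (after the previous normalization) locally constant; this is visibly a coboundary of the $1$-cochain $s\mapsto\lambda_{\tilde s}$. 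For the lift: changing $\tilde s$ to $\tilde s\rho$ with $\rho\in G_K$ replaces each $\mu_{\tilde s}$ by $\mu_{\tilde s\rho}=\mu_{\tilde s}\circ\acc{\tilde s}{\mu_\rho}\circ c_B(\tilde s,\rho)^{-1}$; since $c_B(\tilde s,\rho)\in F^*$ and $\mu_\rho$ can be absorbed, this is again a modification by an $F^*$-valued $1$-cochain, hence changes the class by a coboundary. One may also simply note that a different lift gives another choice of ``set of isogenies indexed by $G$'' and invoke the previous paragraph.

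\smallskip

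\noindent\emph{Step 3: compatibility with inflation.}
This is the point I expect to require the most care, though it is still formal. Inflation of $[c_{B/K}]$ to $H^2(G_\Q,E^*)$ is represented by $(\sigma,\tau)\mapsto c_{B/K}(\bar\sigma,\bar\tau)$ where $\bar\sigma$ is the image of $\sigma$ in $G$. On the other hand the image of $[c_B]$ under $F^*\hookrightarrow E^*$ is represented by $c_B(\sigma,\tau)=\mu_\sigma\circ\acc\sigma{\mu_\tau}\circ\mu_{\sigma\tau}^{-1}$, computed from any locally constant compatible family $\{\mu_\sigma\}_{\sigma\in G_\Q}$. I would use the \emph{same} underlying family $\{\mu_\sigma\}$ defined over $K$ to compute both, and choose the lift so that $\mu_{\widetilde{\bar\sigma}}=\mu_\sigma$ is literally allowed once one checks $\mu_\sigma$ depends only on $\bar\sigma$ on the nose — which is \emph{not} quite true when not all endomorphisms are defined over $K$. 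The fix: $c_{B/K}(\bar\sigma,\bar\tau)$ and $c_B(\sigma,\tau)$ differ by $\mu_{\tilde{\bar\sigma}}\mu_\sigma^{-1}\cdot(\text{similar terms})$, each of which lies in $E^*$ and is locally constant in $\sigma$, and a short computation (the same telescoping as in Step~1) shows the discrepancy is a coboundary of a $1$-cochain $G_\Q\to E^*$ that factors through $G$; hence the two classes in $H^2(G_\Q,E^*)$ agree. The main obstacle, then, is purely one of keeping the lifts and the distinction between $\End(B)$ and $\End_K(B)$ straight; there is no genuinely new input beyond the identities already established for $c_B$ in the excerpt.
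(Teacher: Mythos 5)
Your overall strategy matches the paper's proof (verify $c_{B/K}(s,t)\in E^*$ by a commutation argument, handle independence via $1$-cochain modification, compare with $c_B$ through a coboundary), but there are precision problems in each step that are worth naming because they touch exactly the point where $E^*$, and not $F^*$, enters.

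In Step~1 you claim conjugation by $c_{B/K}(s,t)$ is the identity ``on $\End(B)\supseteq\End_K(B)$.'' That is false: writing $\widetilde{st}=\tilde s\,\tilde t\,\rho$ with $\rho\in G_K$, the compatibility relations give that conjugation by $c_{B/K}(s,t)$ sends $\psi\in\End(B)$ to $\acc\rho\psi$, which equals $\psi$ only when $\psi\in\End_K(B)$. (If your claim were correct, $c_{B/K}$ would take values in $F^*$, contradicting the statement you are proving.) The paper avoids this by computing directly that $c_{B/K}(s,t)\circ\varphi=\varphi\circ c_{B/K}(s,t)$ for $\varphi\in\End_K(B)$, using precisely that $\acc{\tilde s\tilde t\rho}\varphi=\acc{\tilde s\tilde t}\varphi$ because $\varphi$ is defined over $K$. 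The same slip recurs in Step~2: for $\rho\in G_K$, the element $\mu_\rho$ is in $E^*$ (it commutes with $\End_K(B)$, since $\acc\rho\psi=\psi$ for $\psi\in\End_K(B)$) but not in $F^*$ in general, so the change-of-lift $1$-cochain is $E^*$-valued, not $F^*$-valued; the paper derives $\mu_\sigma=\lambda_\sigma\circ\mu_{\tilde s}$ with $\lambda_\sigma\in E^*$ for this exact reason. Fortunately $F^*\subseteq E^*$ and one only needs coboundaries in $Z^2(G,E^*)$, so your conclusion survives the misattribution. In Step~3 the parenthetical claim that the $1$-cochain $\lambda_\sigma=\mu_{\tilde{\bar\sigma}}^{-1}\mu_\sigma$ ``factors through $G$'' is also false (it genuinely depends on $\sigma$, not just $\bar\sigma$), but also unneeded: to show two classes agree in $H^2(G_\Q,E^*)$ it suffices to exhibit any $1$-cochain on $G_\Q$ whose coboundary is the difference, and the paper's identity $c_B(\sigma,\tau)=c_{B/K}(\sigma,\tau)\,\lambda_\sigma\lambda_\tau\lambda_{\sigma\tau}^{-1}$ does exactly this.
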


\begin{proof}
Let $\varphi$ be an element of $\End_K(B)$.
Since $\widetilde{st}=\tilde s\,\tilde t\,\tau$ for some $\tau\in G_K$ we have
\begin{eqnarray*}
c_{B/K}(s,t) \circ \varphi
&=&\mu_{\tilde s} \circ \acc{\tilde s}{\mu_{\tilde t}} \circ \mu_{\widetilde{st}}^{-1} \circ \varphi=
   \mu_{\tilde s} \circ \acc{\tilde s}{\mu_{\tilde t}} \circ \acc{\widetilde{st}}\varphi
        \circ \mu_{\widetilde{st}}^{-1}=\\
&=&\mu_{\tilde s} \circ \acc{\tilde s}{\mu_{\tilde t}} \circ \acc{\tilde s\,\tilde t\,\tau}{\varphi}
        \circ \mu_{\widetilde{st}}^{-1}=
   \mu_{\tilde s} \circ \acc{\tilde s}{\mu_{\tilde t}} \circ \acc{\tilde s\,\tilde t}{\varphi}
        \circ \mu_{\widetilde{st}}^{-1}=\\
&=&\mu_{\tilde s} \circ \acc{\tilde s}{\varphi} \circ \acc{\tilde s}{\mu_{\tilde t}}
        \circ \mu_{\widetilde{st}}^{-1}=
    \varphi \circ \mu_{\tilde s} \circ \acc{\tilde s}{\mu_{\tilde t}}
        \circ \mu_{\widetilde{st}}^{-1}=\varphi \circ c_{B/K}(s,t),
\end{eqnarray*}
and this shows that $c_B(s,t)$ lies in $E$.
In the same way we can prove the cocycle condition,
and the independence on the set $\{\mu_\sigma\}_{\sigma\in G_\Q}$
is seen in an analogous way than for the case of the cocycle $c_B$.

Observe that, if $\sigma\in G_\Q$ is such that $\sigma|_{K}=\tilde
s|_{K}$ then $\mu_\sigma\circ\mu_{\tilde{s}}^{-1}$ commutes with
the elements in $\End_K(B)$; therefore, we can write
$\mu_\sigma=\lambda_\sigma\circ\mu_{\tilde{s}}$ for some
$\lambda_\sigma\in E^*$. Using this it is immediate to see that
the use of another lift from $\Gal(K/\Q)$ to $ G_\Q$ would modify
the cocycle $c_{B/K}$ by a coboundary.

It remains to prove the last statement in the lemma. Take
$\sigma,\tau\in G_\Q$ and put $s=\sigma|_{K}$, $t=\tau|_{K}$. We
use the same name for the cocycles and for their images for the
morphisms involved; namely, $c_{B/K}$ is the inflation to $ G_\Q$
of $c_{B/K}$ and $c_B$ is the image of $c_B$ in $Z^2( G_\Q,E^*)$.
By the definitions
$c_B(\sigma,\tau)=\mu_\sigma\circ\acc\sigma{\mu_\tau}\circ\mu_{\sigma\tau}^{-1}$
and $c_{B/K}(\sigma,\tau)=\mu_{\tilde s}\circ\acc{\tilde
s}{\mu_{\tilde t}}\circ\mu_{\widetilde{st}}^{-1}$. Since
$\sigma_{|K}=\tilde{s}_{|K}$ we see that
$\mu_\sigma=\mu_{\tilde{s}}\circ\lambda_\sigma$ for some
$\lambda_\sigma\in E$. Now
$c_B(\sigma,\tau)=c_{B/K}(\sigma,\tau)\circ\lambda_\sigma\circ\lambda_\tau\circ\lambda_{\sigma\tau}^{-1}$
and the two cocycles are cohomologous.
\end{proof}

\subsection*{ Restriction of scalars of $K$-building blocks.}
Our objective now is to compute the endomorphism algebra
$\End_\Q(\Res_{K/\Q}(B))$ for a $K$-building block $B$. What we
obtain is a generalization of the expression found by  Ribet in
\cite[Lemma 6.4]{ribet-avQ} for the case of $\Q$-curves, giving
the algebra as a twisted group algebra. The main difference is
that in our case the algebra is obtained by a construction that
mimics the standard twisted group algebra definition, which we
first describe in abstract terms.

\def\A{\mathcal A}
Let $\A$ be a central $E$-algebra and let $c\in Z^2(G,E^*)$ be a two-cocycle on a finite group
$G$ with values in the multiplicative group $E^*$ viewed as a module with trivial action.
One defines the $E$-algebra $\A^c[G]$ by  generalizing
the usual definition of twisted group algebra:
it is the free left $\A$-module $\oplus_{s\in G}\A\cdot\lambda_s$ with basis a set of symbols
$\lambda_s$ indexed by the elements $s\in G$ and multiplication defined by the relations:
\begin{equation}\label{definition-generalized-twisted-group-algebra}
\begin{aligned}
a\cdot\lambda_s&=\lambda_s\cdot a,\qquad\text{for}\ \ \ a\in\A,\\
\lambda_s\cdot\lambda_t&=c(s,t)\cdot\lambda_{st}.\end{aligned}\end{equation}
The cocycle condition for $c$ is used to check that this definition makes sense
and produces an associative algebra, and of course its isomorphism class does
depend only on the cohomology class of the cocycle $c$.
This algebra is related with the twisted group
algebra $E^c[G]$ through the following isomorphism:
$$\A^c[G]\simeq\A\otimes_E E^c[G]\qquad\text{as}\quad E\text{-algebras}.$$
Indeed, if we let $E^c[G]=\oplus_{s\in G}E\cdot\lambda_s$ then the
map $a\otimes\sum
x_s\cdot\lambda_s\mapsto\sum(ax_s)\cdot\lambda_s\colon\A\otimes_E
E^c[G]\to\A^c[G]$ is an isomorphism of $E$-algebras.

\begin{proposition}\label{proposition-endomorphism-algebra-restriction-of-scalars}
Let $B$ be a $K$-building block over a Galois number $K$ field with
group $G=\Gal(K/\Q)$. Let $\D=\End_K(B)$ and $E=Z(\D)$. Then,
\begin{equation}\label{eq:endomorphism algebra of the restriction of scalars}\End_\Q(\Res_{K/\Q}(B))\simeq\D\otimes_E
E^{c_{B/K}}[G].\end{equation}
\end{proposition}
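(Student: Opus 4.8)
The plan is to use the classical description of $\End_\Q(\Res_{K/\Q}(B))$ as the algebra of $G$-equivariant correspondences on $\prod_{s\in G}\acc{s}{B}$ and to match it termwise with the generalized twisted group algebra. Concretely, for any abelian variety $B/K$ with $K/\Q$ Galois of group $G$ one has a canonical identification of $\Res_{K/\Q}(B)$ over $\overline\Q$ with $\prod_{s\in G}\acc{s}{B}$, compatibly with the $G_\Q$-action that permutes the factors via left translation and acts on the $s$-component through its action on $K$. Under this identification an endomorphism of $\Res_{K/\Q}(B)$ over $\Q$ is the same as a $G_\Q$-invariant element of $\bigoplus_{s,t\in G}\Hom_{\overline\Q}(\acc{t}{B},\acc{s}{B})$; I would first spell this out and observe that such an invariant family is determined by its ``diagonal-modulo-$G$'' data, i.e.\ by the components $\psi_s\in\Hom(\acc{s}{B},B)$ for $s\in G$, because the $G$-equivariance recovers all other components from these. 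This is the standard computation (Ribet, \cite[Lemma 6.4]{ribet-avQ}); the novelty here is only bookkeeping when $\End(B)\ne\End_K(B)$.

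Next I would use that $B$ is a $K$-building block: fix the chosen system of isogenies $\mu_\sigma\colon\acc{\sigma}{B}\to B$ compatible with $\End(B)$ and defined over $K$, and for $s\in G$ write $\mu_s$ for $\mu_{\tilde s}$. Then every $\Hom(\acc{s}{B},B)$ is a free rank-one module over $\D=\End_K(B)$ with generator $\mu_s$; more precisely $\Hom_K(\acc{s}{B},B)=\D\cdot\mu_s$, and because the $\mu_\sigma$ are defined over $K$ one checks that the $K$-rational homomorphisms already exhaust all of $\Hom_{\overline\Q}(\acc{s}{B},B)$ that can occur in a $G_\Q$-invariant family (this is exactly where ``defined over $K$'' in the definition of $K$-building block is used). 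So I would define, for $a\in\D$, the symbol $\lambda_s$ to correspond to the equivariant endomorphism whose $s$-diagonal component is $\mu_s$, and $a\cdot\lambda_s$ to the one with component $a\circ\mu_s$. The map
\[
\D\otimes_E E^{c_{B/K}}[G]\longrightarrow \End_\Q(\Res_{K/\Q}(B)),\qquad a\otimes\lambda_s\longmapsto a\cdot\lambda_s,
\]
is then manifestly $E$-linear and bijective on underlying modules, once one has the two module identifications above; bijectivity is a dimension count together with freeness.

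The substance is checking that this is an \emph{algebra} homomorphism, i.e.\ that composition of the equivariant endomorphisms attached to $\lambda_s$ and $\lambda_t$ reproduces the relations \eqref{definition-generalized-twisted-group-algebra}: namely $a\cdot\lambda_s=\lambda_s\cdot a$ for $a\in\D$ and $\lambda_s\cdot\lambda_t=c_{B/K}(s,t)\cdot\lambda_{st}$. The first relation is precisely the compatibility of $\mu_s$ with $\End(B)$ together with the fact that elements of $\D=\End_K(B)$ are $K$-rational, so that $\acc{\tilde s}{a}=a$ after pulling back along the Galois action — this is the same manipulation carried out in the lemma preceding the proposition, showing $c_{B/K}(s,t)$ centralizes $\D$. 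The second relation is the definition of $c_{B/K}$: composing the $s$- and $t$-equivariant pieces forces the appearance of $\mu_{\tilde s}\circ\acc{\tilde s}{\mu_{\tilde t}}=c_{B/K}(s,t)\circ\mu_{\widetilde{st}}$, which is exactly $c_{B/K}(s,t)\cdot\lambda_{st}$. I expect the main obstacle to be purely notational: keeping straight the interplay between the abstract permutation action of $G_\Q$ on $\prod_s\acc{s}{B}$, the Galois action on morphisms, and the choice of lifts $s\mapsto\tilde s$, and verifying that changing lifts or the system $\{\mu_\sigma\}$ only alters things by the coboundary already accounted for in the previous lemma — so that the isomorphism is well defined on $\D\otimes_E E^{c_{B/K}}[G]$ and not merely on a cocycle-dependent model. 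Once the two module identifications and these two relations are in place, the proposition follows, and I would close by noting that the isomorphism is the one making \eqref{eq:endomorphism algebra of the restriction of scalars} functorial in $B$ up to $K$-isogeny.
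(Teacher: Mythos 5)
Your proposal follows essentially the same route as the paper's proof: identify $\End_\Q(\Res_{K/\Q}B)$ with $\bigoplus_{s\in G}\Hom_K(\acc{s}{B},B)\simeq\bigoplus_{s\in G}\D\cdot\mu_s$, introduce the symbols $\lambda_s$ realized via the compatible isogenies, and verify the two defining relations of the generalized twisted group algebra (centrality of $\D$ via compatibility of the $\mu_\sigma$, and $\lambda_s\lambda_t=c_{B/K}(s,t)\lambda_{st}$ by definition of the cocycle). The only cosmetic difference is that you phrase the reduction to the diagonal data through $G_\Q$-equivariance on $\prod_s\acc{s}{B}$, while the paper invokes the universal property $\End_\Q(A)\simeq\Hom_K(A_K,B)$ directly; these are the same computation, and I would only caution that the $K$-rationality of the diagonal components follows from $G_K$-invariance alone, whereas the $K$-rationality of the $\mu_\sigma$ is used precisely to make the $\lambda_s$ well-defined $\Q$-rational endomorphisms (independent of the choice of lift in a $G_K$-coset), a distinction your write-up slightly blurs.
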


\begin{proof}
Call $A$ the variety $\Res_{K/\Q}(B)$.
For each $s\in G$ fix a representative $\tilde{s}$ for $s$ in $G_\Q$
 imposing that $\tilde{1}=1$.
Let $\{\mu_\sigma\}_{\sigma\in G_\Q}$ be
a locally constant set of  isogenies compatible with $\End(B)$ defined over $K$
in which we have chosen $\mu_{1}$ to be the identity.
We know that $A\sim_K\prod_{s\in G}\acc{\tilde s}B$, and that by the universal property of the
 restriction of scalars functor $\End_\Q(A)\simeq \Hom_K(A,B)$. Hence,
$$\End_\Q(A)\simeq \Hom_K(A,B)\simeq\prod_{s\in G}\Hom_K(\acc{\tilde s}B,B)\simeq
    \prod_{s\in G}\D\cdot\mu_{\tilde s}$$
and we see that $\End_\Q(A)$ is a left $\D$-module of dimension
$[K:\Q]$. We shall determine now its structure as an algebra.
Define, for $s\in G$, $\lambda_s$ to be the endomorphism of $A$
which sends $\acc{\tilde{ts}}B$ to $\acc{\tilde t}B$ via
$\acc{\tilde t}\mu_{\tilde s}$. It is fixed by all elements in
$G_\Q$ and so it is an endomorphism of $A$ defined over $\Q$.
Since we forced $\tilde{1}$ to be $1$, we can identify $\lambda_1$
with the identity endomorphism of $\End_\Q(A)$.

We can embed $\D$ in $\End_\Q(A)$ by sending each $d\in\D$ to the morphism
whose components are the diagonal maps $\acc{\tilde s}d\colon\acc{\tilde s}B\to\acc{\tilde s}B$.
Hence, we can multiply the $\lambda_s$ by elements $d$ in $\D$ in the following way,
depending on whether we left or right multiply:
$$d\circ\lambda_s\colon\acc{\widetilde{ts}}B
    \buildrel{\acc{\tilde t}{\mu_{\tilde s}}}\over\longrightarrow\acc{\tilde t}B
    \buildrel{\acc{\tilde t} d}\over\longrightarrow\acc{\tilde t}B$$
$$\lambda_s\circ d\colon\acc{\widetilde{ts}}B
    \buildrel{\acc{\widetilde{ts}}d}\over\longrightarrow\acc{\widetilde{ts}}B
   \buildrel{\acc{\tilde t}{\mu_{\tilde s}}}\over\longrightarrow\acc{\tilde t}B.$$
By the compatibility of the isogenies it is clear that these two maps coincide,
and therefore $d\circ\lambda_s=\lambda_s\circ d$.
Also the compatibility of the isogenies gives us the formula
$\lambda_s\circ\lambda_t=c_{B/K}(s,t)\circ\lambda_{st}$.
That is,  multiplication in $\End_\Q(A)$ is given in terms of this basis by formulas
\eqref{definition-generalized-twisted-group-algebra} with cocycle $c_{B/K}$,
so that this algebra is isomorphic to $\D^{c_{B/K}}[G]$.
\end{proof}

\section{Strongly modular abelian varieties}

Let $B$ be a $K$-building block over a Galois number field $K$ with Galois group $G=\Gal(K/\Q)$.
Let $\D=\End_K(B)$, $E=Z(\D)$, and $t$ the index of $\D$.
Recall that in the previous section we have associated to $B/K$
a cohomology class $[c_{B/K}]\in H^2(G,E^*)$.
 In this section we characterize when $B$ is strongly modular over $K$ in terms of that class.

\begin{lemma}\label{lemma-restriction-of-scalars-of-GL2-type}
Let $B$ be a $K$-building block over a Galois number field $K$.
If $A=\Res_{K/\Q}(B)$ is an abelian variety of $\GL_2$-type, then
$$A \sim_\Q A_1^t\times\cdots\times A_n^t,$$
with the $A_i$ pairwise non-isogenous $\Q$-simple abelian varieties of $\GL_2$-type.
\end{lemma}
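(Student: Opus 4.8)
The plan is to compute $\End_\Q(A)$ for $A=\Res_{K/\Q}(B)$ and to show that the $\GL_2$-type hypothesis forces the twisted group algebra appearing in that description to be commutative. By Proposition~\ref{proposition-endomorphism-algebra-restriction-of-scalars} one has $\End_\Q(A)\simeq\D\otimes_E\A$, where I write $\A:=E^{c_{B/K}}[G]$; this is a semisimple $E$-algebra of $\Q$-dimension $|G|[E:\Q]$. With $t$ the index of $\D=\End_K(B)$ we have $[\D:E]=t^2$, and by the definition of a $K$-building block $\dim B=t[E:\Q]$, so $\dim A=[K:\Q]\dim B=|G|\,t[E:\Q]=t\dim_\Q\A$. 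I will also use two facts recorded in the proof of Lemma~\ref{lemma-GL2-type-is-invariant-by-products}: a variety $C/\Q$ is of $\GL_2$-type exactly when $[\End_\Q(C):\Q]_{\operatorname{red}}=\dim C$, and in that case $\End_\Q(C)$ is moreover a product of matrix algebras over number fields.

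First I would compare reduced degrees. Decompose $\A\simeq\prod_j\M_{m_j}(\Delta_j)$ into simple factors, with $L_j=Z(\Delta_j)$ and $d_j$ the index of $\Delta_j$. Then $\D\otimes_E\M_{m_j}(\Delta_j)\simeq\M_{m_j}(\D\otimes_E\Delta_j)$, and $\D\otimes_E\Delta_j\simeq(\D\otimes_E L_j)\otimes_{L_j}\Delta_j$ is central simple over $L_j$ of reduced degree $t\,d_j$, since $[\D\otimes_E L_j:L_j]_{\operatorname{red}}=[\D:E]_{\operatorname{red}}=t$ and reduced degree is multiplicative under tensor products of central simple algebras over a common center. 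Summing over $j$ gives $[\End_\Q(A):\Q]_{\operatorname{red}}=\sum_j m_j\,t\,d_j\,[L_j:\Q]=t\,[\A:\Q]_{\operatorname{red}}$. On the other hand $A$ is of $\GL_2$-type, so $[\End_\Q(A):\Q]_{\operatorname{red}}=\dim A=t\dim_\Q\A$; comparing, $[\A:\Q]_{\operatorname{red}}=\dim_\Q\A$. Since the $j$-th simple factor has $\Q$-dimension $m_j^2d_j^2[L_j:\Q]$ while its reduced degree is $m_jd_j[L_j:\Q]$, this equality forces $m_jd_j=1$ for every $j$; that is, $\A\simeq\prod_j L_j$ is a product of number fields.

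It then follows that $\End_\Q(A)\simeq\prod_j(\D\otimes_E L_j)$, with each $\D\otimes_E L_j$ central simple over $L_j$ of degree $t\leq2$. Because $A$ is of $\GL_2$-type, $\End_\Q(A)$ is a product of matrix algebras over fields, so each factor must be split: $\D\otimes_E L_j\simeq\M_t(L_j)$. Hence $\End_\Q(A)\simeq\prod_j\M_t(L_j)$, and by the uniqueness of the decomposition of $A$ up to $\Q$-isogeny this translates into $A\sim_\Q A_1^t\times\cdots\times A_n^t$ with the $A_j$ pairwise non-isogenous $\Q$-simple abelian varieties satisfying $\End_\Q(A_j)\simeq L_j$; each $A_j$ is of $\GL_2$-type because $A$ is, by Lemma~\ref{lemma-GL2-type-is-invariant-by-products}.

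The step I expect to be the main obstacle is the reduced-degree computation of the second paragraph in the quaternionic case $t=2$: there the tensor factors $\D\otimes_E\Delta_j$ need not be split a priori, so a crude count of $\Q$-dimensions does not suffice and one must argue through reduced degrees; likewise, the concluding splitting $\D\otimes_E L_j\simeq\M_t(L_j)$ has to be extracted from the fact that $\End_\Q(A)$, being of $\GL_2$-type, is a product of matrix algebras over number fields rather than merely from a dimension equality.
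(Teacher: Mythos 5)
Your proof is correct, but it takes a genuinely different route from the paper's. The paper begins with the a~priori decomposition $A\sim_\Q\prod A_i^{r_i}$ into $\Q$-simple $\GL_2$-type factors, shows $r_i\geq t$ by injecting $\End_K(B)$ into each simple component $\M_{r_i}(E_i)$ of $\End_\Q(A)$ (which forces $r_i\geq2$ when $\D$ is non-commutative), and then forces $r_i=t$ by computing $\dim_\Q\End_\Q(A)=|G|\,t^2[E:\Q]=t\dim A$ directly from $\End_\Q(A)\simeq\bigoplus_{s\in G}\Hom_K({}^sB,B)\simeq\D^{\,|G|}$ and comparing with $\sum r_i^2\dim A_i$. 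You instead invoke Proposition~\ref{proposition-endomorphism-algebra-restriction-of-scalars} right away and work with reduced degrees: the $\GL_2$-type identity $[\End_\Q(A):\Q]_{\operatorname{red}}=\dim A$, combined with $[\End_\Q(A):\Q]_{\operatorname{red}}=t\,[\A:\Q]_{\operatorname{red}}$ and $\dim A=t\dim_\Q\A$, forces $\A=E^{c_{B/K}}[G]$ to be a product of number fields, and then the ``product of matrix algebras over fields'' consequence of $\GL_2$-type splits each $\D\otimes_E L_j$. Your route is heavier machinery for Lemma~\ref{lemma-restriction-of-scalars-of-GL2-type} alone, but it simultaneously establishes the commutativity of $E^{c_{B/K}}[G]$, which is precisely the nontrivial half of the subsequent Lemma~\ref{lemma-restriction-of-scalars-and-commutativity}; the paper instead proves that lemma in two steps, using Lemma~\ref{lemma-restriction-of-scalars-of-GL2-type} as an ingredient. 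In short, your argument merges the two lemmas, while the paper's is more economical if one only wants the statement at hand. Both are sound.
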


\begin{proof}
A priori we know that
\begin{equation}\label{eq:1}A\sim_\Q A_1^{r_1}\times\cdots\times A_n^{r_n}\end{equation}
for some $r_i> 0$ and with the $A_i$ being non-isogenous
$\Q$-simple abelian varieties of $\GL_2$-type.
If we set $E_i=\End_\Q(A_i)$ then $\End_\Q(A)\simeq\M_{r_1}(E_1)\times\dots\times\M_{r_n}(E_n)$.

First we show that each $r_i$ is at least $t$. If $t=1$ this is
clear, so we suppose now that $t=2$. There is an injection of
algebras $\End_K(B)\hookrightarrow \End_\Q(A)$, and so $\End_K(B)$
injects into each simple component $M_{r_i}(E_i)$ of $\End_\Q(A)$.
If $t=2$ then $\End_K(B)$ is non-commutative, and so each $r_i$ must be at least 2.

Now we show that each $r_i$ is in fact equal to $t$.
On the one hand we know, by the universal property of the restriction of scalars,
that $\End_\Q(A)\simeq\Hom_K(A_K,B)$,
and using that $A_K\sim_K\prod_{s\in \Gal(K/\Q)}{\acc s B}$ we have that
\begin{equation}\label{eq:2}
\End_\Q A\simeq \Hom_K(A_K,B)\simeq \Hom_K(\prod{^sB},B)\simeq
\bigoplus_{s\in \Gal(K/\Q)}\Hom_K({^sB},B).
\end{equation}
Since $B$ is a $K$-building block each $^sB$ is $K$-isogenous to
$B$, and so we have an isomorphism of $\D$-modules
$\Hom_K({^sB},B)\simeq\D$. Since $\D$ is a $\Q$-vector space of
dimension $t^2[E:\Q]=t\dim B$ one obtains $\dim_\Q \End_\Q(A)=|G|t\dim
B=t\dim A$.

On the other hand, we can use expression \eqref{eq:1} to
calculate the same dimension. We have shown that $r_i\geq t$ for
all $i$. Suppose that for some $i$ we had $r_i>t$. Then we would
find that
\begin{eqnarray*}
\dim_\Q \End_\Q A&=&r_1^2 \dim A_1+\cdots+r_n^2\dim A_n > tr_1\dim
A_1+\cdots+tr_n\dim A_n=\\&=&t(r_1\dim A_1+\cdots +r_n\dim
A_n)=t\dim A,
\end{eqnarray*}
which is a contradiction with the first calculation we made.
\end{proof}

\begin{lemma}\label{lemma-restriction-of-scalars-and-commutativity}
Let $B$ be a $K$-building block over a Galois number field with
$G=\Gal(K/\Q)$. Then $B$ is strongly modular if and only if the
algebra $E^{c_{B/K}}[G]$ is commutative.
\end{lemma}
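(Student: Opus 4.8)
The plan is to combine the endomorphism algebra computation of Proposition~\ref{proposition-endomorphism-algebra-restriction-of-scalars} with the characterization of strong modularity in terms of $\GL_2$-type (Proposition~\ref{proposition-characterization-strong-modularity-GL2-over-number-fields}) and the structural lemmas already proved. First I would recall that, setting $A=\Res_{K/\Q}(B)$, by Proposition~\ref{proposition-characterization-strong-modularity-GL2-over-number-fields} the variety $B$ is strongly modular over $K$ if and only if $A$ is of $\GL_2$-type, and that by Proposition~\ref{proposition-endomorphism-algebra-restriction-of-scalars} one has the isomorphism of $\Q$-algebras $\End_\Q(A)\simeq\D\otimes_E E^{c_{B/K}}[G]$, where $\D=\End_K(B)$ has center $E$ and index $t\leq 2$, and $\dim_\Q\D=t^2[E:\Q]=t\dim B$. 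So the task reduces to showing that $\D\otimes_E E^{c_{B/K}}[G]$ contains a maximal commutative semisimple subalgebra of $\Q$-dimension $\dim A=|G|\dim B$ precisely when $E^{c_{B/K}}[G]$ is commutative.

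The key step is a dimension/reduced-degree count, using the basic facts about semisimple algebras quoted in the proof of Lemma~\ref{lemma-GL2-type-is-invariant-by-products}: for a semisimple $\Q$-algebra $\A$, the maximal commutative semisimple subalgebras have $\Q$-dimension $[\A:\Q]_{\operatorname{red}}$, so $A$ is of $\GL_2$-type if and only if $[\End_\Q(A):\Q]_{\operatorname{red}}=\dim A$. Now I would compute this reduced degree for $\A=\D\otimes_E E^{c_{B/K}}[G]$. For the ``if'' direction, suppose $E^{c_{B/K}}[G]$ is commutative; being a twisted group algebra over a field it is a product of fields (the cocycle being cohomologically nontrivial only contributes field extensions, not matrix algebras, when the result is commutative), say $E^{c_{B/K}}[G]\simeq\prod_j L_j$ with $\sum_j[L_j:E]=|G|$. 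Then $\A\simeq\prod_j(\D\otimes_E L_j)$, and each $\D\otimes_E L_j$ is a central simple $L_j$-algebra of index dividing $t$, so $[\D\otimes_E L_j:L_j]_{\operatorname{red}}=t$ hence $[\D\otimes_E L_j:\Q]_{\operatorname{red}}=t[L_j:\Q]=t[L_j:E][E:\Q]$. Summing, $[\A:\Q]_{\operatorname{red}}=t[E:\Q]\sum_j[L_j:E]=t[E:\Q]\,|G|=|G|\dim B=\dim A$, so $A$ is of $\GL_2$-type and $B$ is strongly modular.

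For the converse, suppose $B$ is strongly modular, so $A$ is of $\GL_2$-type; I would invoke Lemma~\ref{lemma-restriction-of-scalars-of-GL2-type} to get $A\sim_\Q A_1^t\times\cdots\times A_n^t$ with the $A_i$ pairwise non-isogenous $\Q$-simple of $\GL_2$-type, so $\End_\Q(A_i)=E_i$ is a field and $\End_\Q(A)\simeq\prod_i\M_t(E_i)$. Comparing with $\D\otimes_E E^{c_{B/K}}[G]$: the center of $\D\otimes_E E^{c_{B/K}}[G]$ is $Z(E^{c_{B/K}}[G])$, and the number of simple components of a twisted group algebra $E^{c_{B/K}}[G]$ equals the number of simple components of $\D\otimes_E E^{c_{B/K}}[G]$ (tensoring a central simple algebra with each simple block preserves the block count), which is $n$; moreover each block of $\D\otimes_E E^{c_{B/K}}[G]$ must match an $\M_t(E_i)$, which has index $t$ over its center $E_i$. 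Writing $E^{c_{B/K}}[G]\simeq\prod_i B_i$ with $B_i$ simple of index $t_i$ over its center $F_i$, the block $\D\otimes_E B_i$ has index dividing $t\cdot t_i$ and (by a Brauer-group computation, the classes of $\D$ and $B_i$ over $F_i$ being both $2$-torsion since $t,t_i\leq 2$) actually the index of $\M_t(E_i)$ being exactly $t$ forces $t_i=1$ for every $i$ — here I expect the main obstacle, namely ruling out the possibility that $\D\otimes_E B_i$ has index $t$ while $B_i$ itself has index $2$, which should follow because $\dim_\Q\End_\Q(A)=t\dim A$ (computed in the proof of Lemma~\ref{lemma-restriction-of-scalars-of-GL2-type}) leaves no room: $\sum_i t^2[F_i:\Q][B_i:F_i]=t\dim A$ together with $[B_i:F_i]=t_i^2$ and $\sum_i t_i^2[F_i:E]=|G|$ forces all $t_i=1$. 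Once $t_i=1$ for all $i$, each $B_i=F_i$ is a field, so $E^{c_{B/K}}[G]\simeq\prod_i F_i$ is commutative, completing the proof.
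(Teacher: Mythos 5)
Your forward direction is correct and takes a slightly different (and cleaner) route than the paper: the paper explicitly exhibits a decomposition $A\sim_\Q\prod (A_i')^t$ and verifies each $A_i'$ is of $\GL_2$-type via an argument on tangent spaces, whereas you observe that $[\D\otimes_E L_j:\Q]_{\operatorname{red}}=t[L_j:\Q]$ regardless of the Schur index of $\D\otimes_E L_j$, sum over $j$, and invoke the reduced-degree criterion for $\GL_2$-type. That shortcut is valid and worth keeping.

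Your converse, however, has a genuine gap. The dimension count you rely on at the end is vacuous: writing $E^{c_{B/K}}[G]\simeq\prod_i B_i$ with $B_i$ central simple of degree $d_i^2$ over $F_i$, one has $\dim_\Q\End_\Q(A)=t^2[E:\Q]\sum_i d_i^2[F_i:E]=t^2[E:\Q]|G|=t\dim A$ \emph{identically}, whether or not the $B_i$ are commutative, so equality there imposes no constraint on the $d_i$. You also repeatedly write ``index'' where you mean ``degree'' (e.g.\ $\M_t(E_i)$ has index $1$, not $t$), which is what leads you to worry about a Brauer-group cancellation ($\D$ and $B_i$ both $2$-torsion) that is actually irrelevant. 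The correct and short argument, which your block-matching observation already sets up, is: uniqueness of the Wedderburn decomposition matches each $\D\otimes_E B_i$ with some $\M_t(E_j)$; comparing centers gives $F_i\simeq E_j$, and comparing \emph{degrees} over that common center gives $t\cdot\deg_{F_i}(B_i)=t$, forcing $\deg_{F_i}(B_i)=1$, i.e.\ $B_i=F_i$ is a field. That is exactly how the paper concludes (``\eqref{eq:3} forces $r_i=1$ and $C_i\simeq E_i$''). No Brauer-group or dimension bookkeeping is needed beyond this degree comparison.
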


\begin{proof}
First suppose that $E^{c_{B/K}}[G]$ is commutative.
Then it is a product of fields so that  $E^{c_{B/K}}[G]=\prod E_i$.
Call $A$ the variety $\Res_{K/\Q}(B)$.
By Proposition \ref{proposition-endomorphism-algebra-restriction-of-scalars} we know that
$$\End_\Q(A)\simeq\D\otimes_E E^{c_{B/K}}[G]\simeq\prod\D\otimes_E E_i,$$
with $\D\otimes_E E_i$ a central simple $E_i$-algebra with index
$t_i$ dividing $t$. Corresponding to this decomposition of
$\End_\Q(A)$ there is a decomposition of $A$ up to $\Q$-isogeny:
$A\sim_\Q\prod A_i$, and $\End_\Q(A_i)\simeq\D\otimes_E E_i$. As
$A_K\simeq\prod{^sB}\sim_K B^{|G|}$, each $A_i$ is $K$-isogenous
to $B^{n_i}$ for some $n_i$. We claim that $n_i$ equals $[E_i:E]$.
To prove the claim, first we observe that the natural inclusion
$\End_\Q(A_i)\hookrightarrow \End_K(A_i)$ gives an injective
morphism $\D\otimes_E E_i \hookrightarrow \M_{n_i}(\D).$ Looking
at the reduced degrees of these algebras over $E$ we see that
$t[E_i:E]\leq t n_i$, and then $[E_i:E]\leq n_i$. To see the
equality, we can use that on the one hand, as
$\End_\Q(A)\simeq\bigoplus_{s\in G}\Hom(\accl s
B,B)\simeq\bigoplus_{s\in G}\D$ we have that
$$[\End_\Q(A):E]=|G|t^2=t^2\sum n_i.$$
But, on the other hand we have that
$$[\End_\Q(A):E]=[\D\otimes_E\prod E_i\ :E]=t^2\sum [E_i:E],$$
and this gives that $[E_i:E]=n_i$.

Returning to the proof of the lemma,
since $\End_\Q(A_i)\simeq\D\otimes_E E_i$ is a central simple algebra of index $t_i|t$,
there exists a  division $E_i$-algebra $\D_i$ of index $t_i$ acting on the differentials of $A_i$.
The space of differentials of $A_i$ is a $\Q$-vector space
of dimension equal to the dimension of $A_i$, and so we have that $[\D_i:\Q]|\dim A_i$.
But $[\D_i:\Q]=t_i^2[E_i:E][E:\Q]$ and $\dim A_i=n_i\dim B=n_i t[E:\Q]=t[E_i:E][E:\Q]$,
because $n_i=[E_i:E]$. This means that
$$t_i^2[E_i:E][E:\Q]\ |\ t[E_i:E][E:\Q],$$
so $t_i^2|t$, which implies $t_i=1$. This means that $\D\otimes_E E_i\simeq\M_t(E_i)$,
and therefore $A_i\sim_\Q (A_i')^t$, for some abelian variety $A_i'$  with $\End_\Q(A_i')\simeq E_i$.
Finally, $A_i'\sim_K B^{n_i/t}$, which gives that
$$[E_i:\Q]=n_i[E:\Q]=\frac{n_i}{t}t[E:\Q]=\frac{n_i}{t}\dim B=\dim A_i',$$
showing that each $A_i'$ is a variety of $\GL_2$-type.

In order to prove the other implication, by the previous lemma we
can suppose that $A\sim_\Q A_1^t\times\cdots\times A_n^t$, and as
a consequence that
\begin{equation}\label{eq:3}\End_\Q(A)\simeq\M_t(E_1)\times\cdots\times\M_t(E_n),\end{equation}
where the notation is the same as in the first part of the proof. On the other hand,
$$\End_\Q(A)\simeq\D\otimes_EE^{c_K}[G]=\D\otimes_E\prod\M_{r_i}(C_i)$$
where the $C_i$ are division algebras. But \eqref{eq:3} forces
$r_i=1$ and $C_i\simeq E_i$ for each $i$.
\end{proof}

Now we state our main result, giving a characterization of strong modularity.

\begin{theorem}\label{main-theorem}
Let $K$ be a Galois number field and let $B/K$ be a $K$-simple abelian variety.
Then $B$ is strongly modular over $K$ if and only if
it is a $K$-building block, the extension $K/\Q$ is abelian, and $[c_{B/K}]$ belongs to the subgroup
$\Ext(G,E^*)\subseteq H^2(G,E^*)$ consisting of symmetric cocycle classes.
\end{theorem}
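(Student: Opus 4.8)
The plan is to combine the two preceding lemmas with an elementary computation in a twisted group algebra. First I would reduce both implications to the case in which $B$ is a $K$-building block: in the ``only if'' direction this is exactly Corollary~\ref{corollary-necessity-of-K-bb}, since $B$ is assumed $K$-simple and $K/\Q$ Galois, while in the ``if'' direction it is part of the hypothesis. Once $B$ is a $K$-building block with $G=\Gal(K/\Q)$, $\D=\End_K(B)$ and $E=Z(\D)$, Lemma~\ref{lemma-restriction-of-scalars-and-commutativity} identifies strong modularity of $B$ over $K$ with the commutativity of the algebra $E^{c_{B/K}}[G]$. So the theorem is reduced to the purely algebraic claim that, for a two-cocycle $c\in Z^2(G,E^*)$ with $E^*$ a trivial $G$-module, the algebra $E^c[G]$ is commutative if and only if $G$ is abelian and $[c]\in\Ext(G,E^*)$.

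For this claim I would argue directly with the $E$-basis $\{\lambda_s\}_{s\in G}$ of $E^c[G]$ and the relations $a\lambda_s=\lambda_s a$ and $\lambda_s\lambda_t=c(s,t)\lambda_{st}$. Evaluating $\lambda_s\lambda_t=\lambda_t\lambda_s$ gives $c(s,t)\lambda_{st}=c(t,s)\lambda_{ts}$; since the $\lambda_g$ form a basis and $c$ takes values in $E^*$, this forces $st=ts$ and then $c(s,t)=c(t,s)$. Conversely, if $G$ is abelian and $c$ is symmetric then all $\lambda_s$ commute and so does the whole algebra. Hence $E^c[G]$ is commutative precisely when $G$ is abelian and the particular cocycle $c$ is symmetric. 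To upgrade ``$c$ symmetric'' to ``$[c]$ admits a symmetric representative'' I would invoke the fact (already recorded when the generalized twisted group algebra was introduced) that the isomorphism class of $E^c[G]$ depends only on $[c]$: thus if $[c]=[c']$ with $c'$ symmetric then $E^c[G]\simeq E^{c'}[G]$ is commutative, and conversely if $E^c[G]$ is commutative then $c$ is itself a symmetric cocycle in the class $[c]$. This is exactly the standard description of $\Ext(G,E^*)$, for abelian $G$, as the subgroup of $H^2(G,E^*)$ consisting of classes representable by a symmetric cocycle (note that a coboundary $(s,t)\mapsto f(s)f(t)f(st)^{-1}$ is automatically symmetric when $G$ is abelian, so symmetry is a well-defined condition on cohomology classes).

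I do not expect a genuine obstacle: all of the arithmetic–geometric input has been packaged into Corollary~\ref{corollary-necessity-of-K-bb} and Lemma~\ref{lemma-restriction-of-scalars-and-commutativity}, and what remains is the basis computation above plus the textbook identification of symmetric $2$-cocycle classes with $\Ext$. The one point that deserves attention is bookkeeping: commutativity of $E^{c_{B/K}}[G]$ must be recognized as an invariant of the class $[c_{B/K}]$ and not of the specific cocycle obtained from a choice of compatible isogenies and of a set-theoretic section $s\mapsto\tilde s$, and it is precisely this that licenses the clean formulation of the theorem in terms of $[c_{B/K}]$ lying in $\Ext(G,E^*)$.
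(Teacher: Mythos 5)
Your proof is correct and follows the same route as the paper: reduce to the $K$-building block case via Corollary~\ref{corollary-necessity-of-K-bb}, invoke Lemma~\ref{lemma-restriction-of-scalars-and-commutativity} to translate strong modularity into commutativity of $E^{c_{B/K}}[G]$, and observe that a twisted group algebra $E^c[G]$ is commutative exactly when $G$ is abelian and $c$ is symmetric. The only difference is that you actually supply the basis computation and the cocycle-versus-class bookkeeping for that last step, which the paper asserts without proof.
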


\begin{proof}
By Corollary \ref{corollary-necessity-of-K-bb} being a $K$-building block is a necessary condition,
and in that case the previous lemma says that being strongly modular is equivalent to the fact
that the algebra $E^{c_{B/K}}[G]$ is commutative.
A twisted group algebra $E^c[G]$ is commutative if and only if the group $G$ is abelian
and the cocycle $c$ is symmetric.
\end{proof}

\subsection*{ Strongly modular simple varieties.}
The previous theorem shows that strong modularity puts very restrictive conditions on varieties. In what follows, we
consider the  setting in which $B/\Qb$ is a $\Qb$-building block. Given a Galois number field $K$, using Theorem
\ref{main-theorem} we want to give necessary and sufficient conditions to guarantee the existence, in the
$\Qb$-isogeny class of $B$, of some variety which is completely defined over $K$ and strongly modular over $K$.

For that, let $B$ be a  $\Qb$-building block and let $\D=\End(B)$. The center
$F=Z(\D)$ is a totally real number field and $\D$ is either equal
to $F$, in which case $t=1$ and $[F:\Q]=\dim B$, or it is a
totally indefinite quaternion algebra over $F$, with $t=2$ and
$[F:\Q]=\frac12\dim B$. Let $\xi=[c_B]\in H^2(G_\Q,F^*)$ be the
cohomology class attached to $B$.

We fix an embedding $F\hookrightarrow\Qb$. By a theorem of Tate it
is known that the group $H^2(G_\Q,\Qb^*)$ is trivial (here $G_\Q$
acts trivially in $\Qb$), so there exist continuous maps
$\alpha\colon G_\Q\to\Qb^*$ such that
$c_B(\sigma,\tau)=\alpha(\sigma)\alpha(\tau)\alpha(\sigma\tau)^{-1}$
for all $\sigma,\tau\in G_\Q$; two such maps differ by a Galois
character. The map $\overline\alpha\colon G_\Q\to\Qb^*/F^*$
obtained viewing the values of $\alpha$ modulo elements of $F^*$
is a morphism; let $K_\alpha$ denote the fixed field of its
kernel, which is an abelian extension of $\Q$. Using the identity
\eqref{csquare-equal-degree} we see that the map
$\varepsilon_\alpha(\sigma)=\alpha(\sigma)^2/\delta(\mu_\sigma)$
is a Galois character $G_\Q\to\Qb^*$; two such characters differ
by the square of a Galois character. Let $K_{\varepsilon_\alpha}$
be the fixed field of $\ker \varepsilon_\alpha$; the fact that
$\delta(\mu_\sigma)$ is real implies that
$K_{\varepsilon_\alpha}\subseteq K_\alpha$. Let
$E_\alpha=F(\{\alpha(\sigma)\}_{\sigma\in G_\Q})$ be the number
field generated over $F$ by the values of $\alpha$; from the
identity defining $\varepsilon_\alpha(\sigma)$ it easily follows
that $E_\alpha/F$ is an abelian extension. Even though the
splitting maps $\alpha$ depend on the cocycle $c_B$ (or, what is
the same, on a system of isogenies between conjugates
of $B$) the morphisms $\overline\alpha$, the fields $K_\alpha$ and
$E_\alpha$, and the characters $\varepsilon_\alpha$ do not depend
on that choice. We will call the maps $\alpha$ \emph{splitting
maps}, the fields $K_\alpha$ \emph{splitting fields}, and the
characters $\varepsilon_\alpha$ \emph{splitting characters} for
the building block $B$. The isogeny class of a building block
determines a set of morphisms
$\overline\alpha\in\Hom(G_\Q,\Qb^*/F^*)$ that is an orbit by the
action of the group of Galois characters $\Hom(G_\Q,\Qb^*)$, and a
set of splitting characters
$\varepsilon_\alpha\in\Hom(G_\Q,\Qb^*)$ that is an orbit by the
action of the subgroup of squares $\Hom(G_\Q,\Qb^*)^2$.

For every Galois character $\varepsilon\colon G_\Q\to\Qb^*$
choose square roots of its values and define
$$c_\varepsilon(\sigma,\tau)=\sqrt{\varepsilon(\sigma)}\sqrt{\varepsilon(\tau)}
    \sqrt{\varepsilon(\sigma\tau)}^{\,\,-1}.$$
This is a $2$-cocycle on $G_\Q$ with values in $\{\pm1\}$. Its
cohomology class $[c_\varepsilon]\in
H^2(G_\Q,\{\pm1\})\simeq\Br_2(\Q)$ gives the obstruction to the
existence of a square root of $\varepsilon$. If two characters
$\varepsilon,\varepsilon'$ differ by the square of a character,
then $[c_\varepsilon]=[c_{\varepsilon'}]$. If $\xi=[c_B]\in
H^2(G_\Q,F^*)$ is the class attached to a building block $B$, then
$\xi_\pm=[c_\varepsilon]$ with $\varepsilon$ any splitting
character for $B$ (see~\cite[Theorem 2.6]{quer-MC}).

\begin{theorem}\label{theorem:existence of strongly modular abelian varieties}
Let $B/\Qb$ be a building block and let $K/\Q$ an abelian extension.
There exists an abelian variety isogenous to $B$
that is completely defined and strongly modular over the field $K$
if and only if $K$ contains a splitting field for $[c_B]$.
\end{theorem}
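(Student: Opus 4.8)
The plan is to combine Theorem~\ref{main-theorem} with Proposition~\ref{proposition-cB-inflation} and the sign/degree decomposition~\eqref{decomposition-sign-degree} of $2$-torsion classes in $H^2(G_\Q,F^*)$. By Theorem~\ref{main-theorem}, a variety $B_0$ isogenous to $B$ that is completely defined over the Galois field $K$ is strongly modular over $K$ precisely when $K/\Q$ is abelian and $[c_{B_0/K}]\in H^2(K/\Q,F_0^*)$ is a symmetric class, where $F_0$ is the center of $\End(B_0)$ (which is identified with $F$ via any isogeny $B\to B_0$, since the class $[c_B]$ and all these data are isogeny invariants). So the first task is to translate ``there exists such a $B_0/K$'' into a condition purely on the class $[c_B]\in H^2(G_\Q,F^*)$. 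By Proposition~\ref{proposition-cB-inflation}, completely-defined-over-$K$ models exist exactly when $[c_B]$ lies in the image of the inflation $\Inf\colon H^2(K/\Q,F^*)\to H^2(G_\Q,F^*)$, and in that case we may pick $B_0/K$ realizing any preassigned class $[c]\in H^2(K/\Q,F^*)$ inflating to $[c_B]$. Hence \emph{some} completely defined strongly modular model exists over $K$ iff $K/\Q$ is abelian and there is a \emph{symmetric} class $[c]\in\Ext(K/\Q,F^*)\subseteq H^2(K/\Q,F^*)$ whose inflation equals $[c_B]$.

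Next I would unwind this in terms of the two components of $[c_B]$. Since $[c_B]\in H^2(G_\Q,F^*)[2]$ by~\eqref{csquare-equal-degree}, write $[c_B]=(\xi_\pm,\overline{\xi})$ under~\eqref{decomposition-sign-degree}. Inflation and the symmetry condition both respect this decomposition (an abelian group $G$ has $\Ext(G,M)=H^2_{\mathrm{sym}}(G,M)$, and the sign/degree splitting is functorial in $G$). The degree component $\overline{[c_B]}\colon G_\Q\to F^*/\{\pm1\}F^{*2}$ is a homomorphism, and for it the conditions ``inflated from $K/\Q$'' and ``symmetric over the abelian group $\Gal(K/\Q)$'' are both equivalent to $K_P\subseteq K$, where $K_P$ is the fixed field of $\ker\overline{[c_B]}$; note $K_P$ is automatically abelian of exponent $2$ over $\Q$, and any homomorphism into an abelian group gives a symmetric $2$-cocycle. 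So the degree component imposes no obstruction beyond $K_P\subseteq K$ and $K/\Q$ abelian. The real content is the sign component $\xi_\pm\in H^2(G_\Q,\{\pm1\})\simeq\Br_2(\Q)$: we need $\xi_\pm$ to be inflated from a \emph{symmetric} class in $H^2(K/\Q,\{\pm1\})$. Here I would invoke the identification (recalled just before the theorem, from~\cite[Theorem~2.6]{quer-MC}) that $\xi_\pm=[c_\varepsilon]$ for any splitting character $\varepsilon$ of $B$, where $c_\varepsilon(\sigma,\tau)=\sqrt{\varepsilon(\sigma)}\sqrt{\varepsilon(\tau)}\sqrt{\varepsilon(\sigma\tau)}^{-1}$; this class is the obstruction to $\varepsilon$ having a square root as a Galois character.

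The key step, and the main obstacle, is the following claim: for $K/\Q$ abelian with $K_P\subseteq K$, the class $\xi_\pm$ is the inflation of a symmetric class from $H^2(K/\Q,\{\pm1\})$ if and only if $K$ contains a splitting field $K_\alpha$ for $[c_B]$. For the ``if'' direction, suppose $K_\alpha\subseteq K$ for some splitting map $\alpha$; then $\overline\alpha\colon G_\Q\to\Qb^*/F^*$ factors through $\Gal(K/\Q)$, so the cocycle $c_B$ can be written, after adjusting by a coboundary valued in $F^*$, so as to be inflated from $K/\Q$ and with the associated splitting character $\varepsilon_\alpha$ also factoring through $\Gal(K/\Q)$ — then $c_\varepsilon$ lives on the abelian group $\Gal(K/\Q)$, where it is visibly symmetric (a $\{\pm1\}$-valued cocycle of the form $\sqrt{\varepsilon}\,\sqrt{\varepsilon}\,\sqrt{\varepsilon}^{-1}$ on an abelian group is symmetric since $\varepsilon(st)=\varepsilon(ts)$), and its inflation is $\xi_\pm$; combining with the degree part handled above, the full class $[c_B]$ inflates from a symmetric class, and Proposition~\ref{proposition-cB-inflation} produces the desired $B_0/K$. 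For the ``only if'' direction, conversely, if $[c_B]=\Inf([c])$ with $[c]\in\Ext(K/\Q,F^*)$, then unwinding a symmetric representative $c$ over the abelian group $G=\Gal(K/\Q)$ lets one split $c$ through $\Qb^*$ by a map $\alpha$ whose image modulo $F^*$ is a homomorphism $G\to\Qb^*/F^*$ (using that symmetric cocycles on abelian groups with divisible coefficients split compatibly); pulling back to $G_\Q$ gives a splitting map for $[c_B]$ with splitting field contained in $K$. The delicate point throughout is keeping the $F^*$-coboundary adjustments and the choice of square roots of $\varepsilon$ under control so that the sign and degree components are handled simultaneously and coherently; I expect this bookkeeping, rather than any single deep input, to be where the proof requires care, and I would lean on the already-established functoriality of the decomposition~\eqref{decomposition-sign-degree} under restriction and inflation (proved in the Proposition on $K_P$) to organize it.
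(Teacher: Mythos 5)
Your core reduction is exactly the paper's: combine Theorem~\ref{main-theorem} with Proposition~\ref{proposition-cB-inflation} to recast the problem as the existence of a \emph{symmetric} class $[c]\in\Ext(K/\Q,F^*)$ with $\Inf([c])=[c_B]$, and then relate such a class to a splitting map. Your ``only if'' direction is also essentially the paper's: you invoke $\Ext(G,\Qb^*)=0$ (divisibility of $\Qb^*$ over the finite abelian $G$) to split a symmetric representative through $\Qb^*$, while the paper invokes the Karpilovsky fact that a commutative twisted group algebra $\Qb^{c}[G]$ over an algebraically closed field has trivial class in $H^2(G,\Qb^*)$; these are the same conclusion by two equivalent routes, and both then read off a splitting map factoring through $G$ so that $K_\alpha\subseteq K$.

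Where you depart from the paper — and where you correctly sense the delicacy — is the ``if'' direction, which you route through the sign/degree decomposition \eqref{decomposition-sign-degree}, separately arguing about the degree component (reducing to $K_P\subseteq K$) and the sign component $\xi_\pm=[c_\varepsilon]$. This is avoidable: given $K_\alpha\subseteq K$, the paper simply chooses, for each $s\in\Gal(K/\Q)$, a value $\alpha(s):=\alpha(\sigma)$ for one lift $\sigma$ of $s$, and observes that $c(s,t)=\alpha(s)\alpha(t)\alpha(st)^{-1}$ is a well-defined $F^*$-valued $2$-cocycle on $\Gal(K/\Q)$ (because $\overline\alpha$ factors through $G$), is symmetric (because $G$ is abelian), and inflates to $[c_B]$; Proposition~\ref{proposition-cB-inflation} then yields $B_0/K$ with $[c_{B_0/K}]=[c]$, and Theorem~\ref{main-theorem} gives strong modularity. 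No component-wise analysis is needed, and indeed the bookkeeping you flag (coordinating $F^*$-coboundary adjustments with choices of square roots, verifying that the sign/degree splitting commutes with symmetry, etc.) is exactly what the direct construction sidesteps. So the proposal has the right ingredients and the same core argument, but the sign/degree detour in the forward direction is both unnecessary and left incomplete; a final write-up should replace it with the direct cocycle on $\Gal(K/\Q)$.
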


\begin{proof}
The proof is essentially the same given in \cite[Proposition 5.2]{quer-LMS}
for the case of $\Q$-curves.

Suppose that $K$ contains the splitting field $K_\alpha$
corresponding to some splitting map $\alpha$.
For every element $s\in\Gal(K/\Q)$ choose an element $\alpha(s)$ as any of the values
$\alpha(\sigma)$ for $\sigma\in G_\Q$ an automorphism restricting to $s$,
and define $c(s,t)=\alpha(s)\alpha(t)\alpha(st)^{-1}$.
Then $[c]$ is an element of $H^2(K/\Q,F^*)$ whose inflation equals  $[c_B]$.
By Proposition \ref{proposition-cB-inflation} there exists an abelian variety $B_0$
isogenous to $B$ that is completely defined over the field $K$ and with $[c_{B_0/K}]=[c]$.
By construction the cocycle $c$ is symmetric, hence $[c_{B_0/K}]\in\Ext(K/\Q,F^*)$ and
by Theorem \ref{main-theorem} the variety $B_0$ is strongly modular over $K$.

Conversely, assume that there is a variety isogenous to $B$ that
is strongly modular over the field $K$. Let $c_{B_0/K}$ be a
cocycle on $G=\Gal(K/\Q)$ attached to this variety. Then by
Theorem \ref{main-theorem} the algebra $F^{c_{B_0/K}}[G]$ is
commutative. Hence the $\Qb$-algebra
$\Qb^{c_{B_0/K}}[G]=\Qb\otimes_F F^{c_{B_0/K}}[G]$ is also
commutative, and by a property of twisted group algebras over
algebraically closed fields (cf. \cite[Chapter 2, Corollary
2.5]{karpilovski}) it follows that the image of the class
$[c_{B_0/K}]$ in the Schur multiplier group $H^2(G,\Qb^*)$ is
trivial. Hence there exists a map $s\mapsto\alpha(s)\colon
G\to\Qb^*$ such that
$c_{B_0/K}(s,t)=\alpha(s)\alpha(t)\alpha(st)^{-1}$ and its
inflation to the group $G_\Q$ is a splitting map for the variety
that factors through the group $G$, hence $K_\alpha\subseteq K$.
\end{proof}

\section{QM Jacobian surfaces}

In this section we illustrate the previous general results with applications to the
study of concrete abelian surfaces with quaternionic multiplication.
The surfaces we will be dealing with are obtained as  Jacobians
of a family of genus two curves defined in~\cite{ha-mu}.
We will use some results on their arithmetic that appear in~\cite{ba-gr}
to compute the cocycles needed for the characterization of their strong modularity,
and for the computation of $\Q$-endomorphism algebras of their restriction of scalars.

Since we will need quadratic twists later, we begin with a technical lemma describing the effect
of such a twist in the cohomology classes of interest.
For every abelian variety $B/K$ over a number field $K$
and element $\gamma\in K^*$,
let $B_\gamma$ denote the $K(\sqrt\gamma)$-quadratic twist of the variety $B$ over $K$.
In the standard classification of twists by elements of the first Galois cohomology group with values
in the automorphism group of the object,
this variety corresponds to  the homomorphism in $H^1(G_K,\{\pm1\})$
whose kernel has $K(\sqrt\gamma)$ as  fixed field,
which is given by the formula $\sigma\mapsto\acc\sigma{\sqrt\gamma}/\sqrt\gamma$.
Note that here we interpret $\pm1$ as automorphisms of $B$.
In other words, $B_\gamma$ is the abelian variety determined up to $K$-isomorphism by
the fact that there exists an isomorphism $\phi\colon B_\gamma\to B$ defined over $K(\sqrt\gamma)$
such that $\phi\circ\acc\sigma\phi^{-1}=\acc\sigma{\sqrt\gamma}/\sqrt\gamma$
for every $\sigma\in G_K$.

For hyperelliptic Jacobians the quadratic twists are easily computed:
if $C$ is a hyperelliptic curve defined by the equation $Y^2=F(X)$ then for every
$\gamma\in K^*$ the equation $\gamma Y^2=F(X)$ defines an hyperelliptic curve that
is the $K(\sqrt\gamma)$-quadratic twist of $C$ over $K$.
The Jacobian $\operatorname{Jac(B_\gamma)}$ is the $K(\sqrt\gamma)$-quadratic twist
of the abelian variety $\operatorname{Jac}(B)$ over $K$.

\begin{lemma}\label{lemma-quadratic-twists-and-classes}
Let $B/K$ be a $\Q$-variety completely defined over a Galois number field $K$,
and let $\gamma\in K^*$.
The twist $B_\gamma$ is completely defined over $K$ if and only if
the field $K(\sqrt\gamma)$ is Galois over $\Q$.
In this case, $[c_{B/K}]$ and $[c_{B_\gamma/K}]$ differ
by the cohomology class in $H^2(\Gal(K/\Q),\{\pm1\})$
corresponding to the group extension given by the exact sequence
\begin{equation}\label{eq:exact-seqeunce-of-galois-groups}
\diagram 1\rto&\Gal(K(\sqrt\gamma)/K)\simeq\{\pm1\}\rto&\Gal(K(\sqrt\gamma)/\Q)\rto&\Gal(K/\Q)\rto&1\enddiagram.
\end{equation}
In particular, quadratic twisting  affects only the sign components
of the cohomology classes and leaves the degree components unchanged.
\end{lemma}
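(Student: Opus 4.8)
The plan is to derive everything from the twisting isomorphism $\phi\colon B_\gamma\to B$, which is defined over $L:=K(\sqrt\gamma)$ and satisfies $\phi\circ\acc\sigma{\phi^{-1}}=\acc\sigma{\sqrt\gamma}/\sqrt\gamma\in\{\pm1\}$ for $\sigma\in G_K$. Let $\{\mu_s\}_{s\in\Gal(K/\Q)}$ be the $K$-rational isogenies compatible with $\End(B)$ used to define $c_{B/K}$, and let $\varepsilon\colon G_K\to\{\pm1\}$ be the character cutting out $L$. The strategy is to manufacture isogenies between the Galois conjugates of $B_\gamma$ out of the $\mu_s$ by conjugating with $\phi$, to use the relation above to decide when they are defined over $K$, and then to substitute them into the cocycle formula and read off the discrepancy.

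First I would note that the endomorphisms of $B_\gamma$ are automatically defined over $K$: every one of them is of the form $\phi^{-1}\circ\psi\circ\phi$ with $\psi\in\End_K(B)$, and since $\acc\sigma\phi=(\acc\sigma{\sqrt\gamma}/\sqrt\gamma)\circ\phi$ while $\pm1$ is central and killed by squaring, $\acc\sigma(\phi^{-1}\psi\phi)=\phi^{-1}\psi\phi$ for $\sigma\in G_K$; in particular $\phi$ identifies $Z(\End(B_\gamma))$ with $F$. Next, for $\sigma\in G_\Q$ set $\nu_\sigma:=\phi^{-1}\circ\mu_{\sigma|_K}\circ\acc\sigma\phi$; using compatibility of the $\mu_s$ with $\End(B)$ and the $\{\pm1\}$-valuedness of the twisting datum, one checks that $\nu_\sigma\colon\acc\sigma{B_\gamma}\to B_\gamma$ is an isogeny compatible with $\End(B_\gamma)$. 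Conjugating by $\tau\in G_K$, and using that $G_K\trianglelefteq G_\Q$ because $K/\Q$ is Galois, so that $\acc{\tau\sigma}\phi=\varepsilon_{\sigma^{-1}\tau\sigma}\,\acc\sigma\phi$, one obtains $\acc\tau{\nu_\sigma}=\varepsilon_\tau\,\varepsilon_{\sigma^{-1}\tau\sigma}\,\nu_\sigma$. Thus all the $\nu_\sigma$ are simultaneously defined over $K$ and depend only on $\sigma|_K$ precisely when $\varepsilon$ is invariant under $G_\Q$-conjugation, i.e.\ when $G_L=\ker\varepsilon$ is normal in $G_\Q$, i.e.\ when $L/\Q$ is Galois — giving the ``if'' half of the first claim. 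Conversely, if $B_\gamma$ is completely defined over $K$ by compatible $K$-rational isogenies $\nu'_\sigma$, then $\nu'_\sigma=f_\sigma\circ\nu_\sigma$ with $f_\sigma\in Z(\End(B_\gamma))^*=F^*$ defined over $K$; applying the identity above to $\nu'_\sigma$ then forces $\varepsilon_\tau\varepsilon_{\sigma^{-1}\tau\sigma}=1$ for all $\tau\in G_K,\sigma\in G_\Q$, so $L/\Q$ is Galois.

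Assuming $L/\Q$ Galois, fix a set-theoretic section $s\mapsto\widetilde s$ of $\Gal(L/\Q)\to\Gal(K/\Q)$, with associated $2$-cocycle $e(s,t)=\widetilde s\,\widetilde t\,\widetilde{st}^{\,-1}\in\Gal(L/K)\cong\{\pm1\}$ representing the class of \eqref{eq:exact-seqeunce-of-galois-groups}. Compute $c_{B_\gamma/K}$ with the $K$-rational isogenies $\nu_s:=\phi^{-1}\circ\mu_s\circ\acc{\widetilde s}\phi$: substituting into $c_{B_\gamma/K}(s,t)=\nu_s\circ\acc s{\nu_t}\circ\nu_{st}^{-1}$, the matched pair $\acc{\widetilde s}\phi\circ(\acc{\widetilde s}\phi)^{-1}$ cancels, leaving
$$c_{B_\gamma/K}(s,t)=\phi^{-1}\circ\mu_s\circ\acc s{\mu_t}\circ\bigl(\acc{\widetilde s\widetilde t}\phi\circ(\acc{\widetilde{st}}\phi)^{-1}\bigr)\circ\mu_{st}^{-1}\circ\phi.$$
Writing $\widetilde s\,\widetilde t=e(s,t)\,\widetilde{st}$ and using $\acc\rho\phi=(\acc\rho{\sqrt\gamma}/\sqrt\gamma)\circ\phi$ for $\rho\in\Gal(L/K)$, the bracketed factor equals $e(s,t)\in\{\pm1\}\subseteq\Aut(B)$, which is central; hence $c_{B_\gamma/K}(s,t)=e(s,t)\cdot\phi^{-1}\circ c_{B/K}(s,t)\circ\phi$. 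Under the identification of the centers induced by $\phi$ this is exactly the statement $[c_{B_\gamma/K}]=[e]\cdot[c_{B/K}]$ in $H^2(\Gal(K/\Q),F^*)$, with $[e]$ the class of \eqref{eq:exact-seqeunce-of-galois-groups}. For the final sentence: in the first-kind setting the classes are $2$-torsion (by the completely-defined analogue of \eqref{csquare-equal-degree}), so the sign/degree decomposition \eqref{decomposition-sign-degree} applies; since $[e]$ comes from $H^2(\Gal(K/\Q),\{\pm1\})$ it has trivial degree component, and therefore multiplication by $[e]$ leaves the degree component fixed and alters the sign component precisely by $[e]$.

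The computations are formal; I expect the only real difficulty to be bookkeeping the sources and targets of the conjugate isogenies — $\acc{\widetilde s}\phi\colon\acc s{B_\gamma}\to\acc sB$, $\acc s{\mu_t}\colon\acc{st}B\to\acc sB$, $\acc{\widetilde s\widetilde t}\phi\colon\acc{st}{B_\gamma}\to\acc{st}B$, and so on — and checking that $\nu_s$, hence $c_{B_\gamma/K}$, is independent of the section $s\mapsto\widetilde s$: replacing $\widetilde s$ by $\rho\widetilde s$ with $\rho\in\Gal(L/K)$ multiplies $\acc{\widetilde s}\phi$ and $\acc{\widetilde{st}}\phi$ by the same sign, which cancels. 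The substantive inputs — that the twisting datum is $\{\pm1\}$-valued and central, that $G_K$ is normal in $G_\Q$ because $K/\Q$ is Galois, and the identification of the extension class with the $\{\pm1\}$-valued cocycle $e$ — are all elementary.
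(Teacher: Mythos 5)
Your proof is correct and follows essentially the same route as the paper's: you construct the candidate isogenies $\nu_\sigma=\phi^{-1}\circ\mu\circ\acc\sigma\phi$, test their $K$-rationality by conjugating with $\tau\in G_K$, identify the obstruction as the $G_\Q$-conjugation invariance of the character $\varepsilon$ cutting out $L$ (the paper phrases the same condition as $\sqrt{\acc\sigma\gamma}/\sqrt\gamma\in K$), and then push $\phi$ through the cocycle formula to isolate the $\{\pm1\}$-valued factor, which you and the paper both identify with the extension class of \eqref{eq:exact-seqeunce-of-galois-groups}. The only deviations are cosmetic — you lift to $\Gal(L/\Q)$ rather than to $G_\Q$, and you spell out a couple of steps the paper leaves implicit (the $K$-rationality of $\End(B_\gamma)$, the explicit converse that complete definition forces $L/\Q$ Galois, and the final remark on the sign/degree decomposition).
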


\begin{proof}
Note that the cohomology classes $[c_{B/K}]$ and $[c_{B_\gamma/K}]$ we want to compare
take values in groups $F^*$ consisting
of automorphisms of the varieties. The cohomology class attached to the group extension
of the statement takes
values in the group $\{\pm1\}$, which must be identified with a subgroup of $F^*$
by the (canonical) identification of its elements as automorphisms of the variety.

Let $\phi\colon B_\gamma\to B$ be the isomorphism corresponding to
the twist. Then $\phi^{-1}$ is an isomorphism giving $B$ as the
$K(\sqrt\gamma)$-twist of $B_\gamma$ and for every $\sigma\in
G_\Q$ the map
$\acc\sigma\phi\colon\acc\sigma(B_\gamma)\to\acc\sigma B$ is an
isomorphism giving $\acc\sigma(B_\gamma)$ as the
$K(\sqrt{\acc\sigma\gamma})$-twist of $\acc\sigma B$. Every
 isogeny $\nu_\sigma\colon\acc\sigma B_\gamma\to
B_\gamma$ compatible with $\End(B)$ is of the form
$\nu_\sigma=\phi^{-1}\circ\mu_\sigma\circ\acc\sigma\phi$ for an
 isogeny $\mu_\sigma\colon\acc\sigma B\to B$ compatible with $\End(B)$, which by
hypothesis is defined over $K$. For $\tau\in G_K$ one has
$$\acc\tau\nu_\sigma=\acc\tau\phi^{-1}\circ\acc\tau\mu_\sigma\circ\acc{\tau\sigma}\phi
=\acc\tau\phi^{-1}\circ\phi\circ\nu_\sigma\circ\acc\sigma\phi^{-1}\circ\acc{\tau\sigma}\phi=
\frac{\sqrt\gamma}{\acc\tau{\sqrt\gamma}}\circ\nu_\sigma
    \circ\frac{\sqrt{\acc\sigma\gamma}}{\acc\tau{\sqrt{\acc\sigma\gamma}}},$$
which equals $\nu_\sigma$ if and only if the two other maps, each equal to $\pm1$, coincide.
But
$$\frac{\sqrt\gamma}{\acc\tau{\sqrt\gamma}}=
    \frac{\sqrt{\acc\sigma\gamma}}{\acc\tau{\sqrt{\acc\sigma\gamma}}}\quad\Leftrightarrow\quad
    \frac{\sqrt\gamma}{\sqrt{\acc\sigma\gamma}}=
    \frac{\acc\tau{\sqrt\gamma}}{\acc\tau{\sqrt{\acc\sigma\gamma}}}\quad\Leftrightarrow\quad
    \tau\quad\text{fixes}\quad\sqrt{\acc\sigma\gamma}/\sqrt\gamma.$$
Hence the isogeny $\nu_\sigma$ is defined over $K$ if and only if
$\sqrt{\acc\sigma\gamma}/\sqrt\gamma\in K$, and this condition is satisfied for every $\sigma\in G_\Q$
exactly when the extension $K(\sqrt\gamma)/\Q$ is Galois.

Now assume the condition is satisfied.
For each $s\in\Gal(K/\Q)$ fix a  lift $\tilde s$ of $s$ in $\Gal(K(\sqrt\gamma)/\Q)$. Then
\begin{eqnarray*}
c_{B_\gamma/K}(s,t)&=&\nu_{\tilde s}\circ{^{\tilde s}\nu_{\tilde t}}\circ\nu_{\widetilde{st}}^{-1}
=\phi^{-1}\circ\mu_{\tilde s}\circ{^{\tilde s}\phi}\circ{^{\tilde s}\phi}^{-1}\circ{^{\tilde s}
\mu_{\tilde t}}\circ{^{\tilde s\tilde t}\phi}\circ{^{\widetilde{st}}\phi}^{-1}\circ\mu_{\widetilde{st}}^{-1}\circ\phi\\
&=&\phi^{-1}\circ\mu_{\tilde s}\circ{^{\tilde s}\mu_{\tilde t}}\circ{^{\widetilde{st}}({^{\widetilde{st}^{-1}\tilde s\tilde t
}\phi}\circ \phi^{-1}})\circ\mu_{\widetilde{st}}^{-1}\circ\phi\\
&=&\phi^{-1}\circ\mu_{\tilde s}\circ{^{\tilde s}\mu_{\tilde
t}}\circ\mu_{\widetilde{st}}^{-1}\circ\phi\circ({^{\widetilde{st}^{-1}\tilde
s\tilde t }\phi}\circ \phi^{-1})\\
&=& c_{B/K}(s,t)\circ({^{\widetilde{st}^{-1}\tilde s\tilde t
}\phi}\circ\phi^{-1})=c_{B/K}(s,t)\cdot\frac{^{\widetilde{st}^{-1}\tilde s\tilde t }\sqrt{\gamma}}
{\sqrt{\gamma}},
\end{eqnarray*}
and the factor in the right is a cocycle
associated to the group extension~\eqref{eq:exact-seqeunce-of-galois-groups}.
\end{proof}

We now recall some notation and results from~\cite{ba-gr}. If
$a,b\in\Q$ we denote $(a,b)_\Q$ the quaternion algebra over $\Q$
generated by $\imath,\jmath$ with $\imath^2=a$, $\jmath^2=b$ and
$\imath\jmath+\jmath\imath=0$. Let $\B_6=(2,3)_\Q$ be the
quaternion algebra of discriminant 6 over $\Q$, and let
$\O=\Z[\imath,(1+\jmath)/2]$, which is a maximal order of $\B_6$.
We also define the subrings $R_2=\Z[\imath]\simeq\Z[\sqrt2]$,
$R_3=\Z[\jmath+\imath\jmath]\simeq\Z[\sqrt -3]$ and
$R_6=\Z[\mu]\simeq\Z[\sqrt{6}]$, where
$\mu=2\jmath+\imath\jmath$. A curve $C$ is said to be a
\emph{QM-curve with respect to $\O$} if $\O$ can be embedded into
the endomorphism ring of its Jacobian. If $(B,\rho)/\Qb$ is a
polarized abelian variety and $R$ is a subring of $\End(B)$, the
\emph{field of moduli $k_R$} is defined to be the smallest number
field such that for any $\sigma \in \Gal(\Qb/k_R)$ there exists an
isomorphism $\phi_\sigma\colon\acc\sigma B \rightarrow B$ with
$\phi_\sigma^*(\rho)=\acc\sigma\rho$ and such that $r\circ
\phi_\sigma=\phi_\sigma\circ\acc\sigma r$ for all $r\in R$. In
other words, $k_R$ is the field of moduli of the object consisting
of the polarized abelian variety $(B,\rho)$ together with the ring
of endomorphisms $R\subseteq\End(B)$.

The family of surfaces we are going to consider is the following.
For every algebraic number $j\in\Qb$ let $C_j$ be the genus $2$ curve with equation
\begin{equation}\label{eq:family of curves Cj}
\begin{aligned}
C_j\colon\quad Y^2=\ &\left(-4+3\sqrt{-6j}\right)X^6-12(27j+16)X^5-6\,(27j+16)\left(28+9\sqrt{-6j}\right)X^4\\
&+16(27j+16)^2X^3+12(27j+16)^2\left(28-9\sqrt{-6j}\right)X^2\\
&-48(27j+16)^3X+8(27j+16)^3\left(4+3\sqrt{-6j}\right)
\end{aligned}
\end{equation}
Let $B_j=\operatorname{Jac}(C_j)$ be its Jacobian with the canonical principal polarization induced by $C_j$.
Some properties of these objects proved in \cite{ba-gr} are summarized in the following statement.

\begin{theorem}[Baba-Granath]{\ }
\begin{itemize}
\item The curve $C_j$ has field of moduli $\Q(j)$.
For every $\sigma\in G_{\Q(j)}$ such that $\accl\sigma{\sqrt{-6j}}=-\sqrt{-6j}$ the map
$(x,y)\mapsto\left(\frac{-2(27j+16)}x,\frac{y(-2(27j+16))^{3/2}}{x^3}\right)$
is an isomorphism $\accl\sigma C_j\to C_j$.
\item The field $\Q(\sqrt{-6j})
k_\O$ is a field of definition of the endomorphisms of $B_j$.
\item The curves $C_j$ are $QM$-curves with respect to $\O$.
Moreover, for all $j\in \Q$ but for 26 values, $\End^0(B_j)\simeq\B_6$.
Under this isomorphism the Rosati involution $'$ attached to the canonical polarization of $B_j$
is given by $\varphi'=\mu^{-1}\varphi^*\mu$, where $^*$ indicates the
canonical conjugation of $\B_6$.
\item The fields of moduli $k_R$ for the canonically polarized Jacobian $B_j$
and several rings of endomorphisms of interest are given in the following diagram:
$$\diagram
&k_\O=\Q(\sqrt{j},\sqrt{-(27j+16)})\dline\dlline\drline&\\
k_{R_2}=\Q(\sqrt{-(27j+16)})&k_{R_6}=\Q(\sqrt{j})&k_{R_3}=\Q(\sqrt{-j(27j+16)})\\
&k_\Z=\Q(j).\ulline\uline\urline&
\enddiagram$$
\end{itemize}
\end{theorem}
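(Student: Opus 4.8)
The plan is to regard the four assertions as concrete geometric statements about the single explicit family \eqref{eq:family of curves Cj} and to reduce each of them to a computation that can be carried out directly on the equation, the only structural input being the construction of the family in \cite{ha-mu} as a rational model for abelian surfaces with quaternionic multiplication by $\O$. Write $F_j(X)$ for the sextic occurring in \eqref{eq:family of curves Cj} and $\overline{F}_j(X)$ for the polynomial obtained from it under $\sqrt{-6j}\mapsto-\sqrt{-6j}$, so that $\acc\sigma{C_j}\colon Y^2=\overline{F}_j(X)$ for every $\sigma\in G_{\Q(j)}$ with $\acc\sigma{\sqrt{-6j}}=-\sqrt{-6j}$. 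For the first assertion one then checks the polynomial identity
\begin{equation*}
X^6\,F_j\!\left(\frac{-2(27j+16)}{X}\right)=-8(27j+16)^3\,\overline{F}_j(X)
\end{equation*}
by expanding both sides; this says precisely that $(x,y)\mapsto\bigl(-2(27j+16)/x,\ y(-2(27j+16))^{3/2}/x^3\bigr)$ carries $\acc\sigma{C_j}$ isomorphically onto $C_j$, and, together with the trivial case where $\sigma$ fixes $\sqrt{-6j}$, this gives that the field of moduli of $C_j$ is contained in $\Q(j)$. For the reverse inclusion I would compute the absolute Igusa invariants of $C_j$, check that they are invariant under $\sqrt{-6j}\mapsto-\sqrt{-6j}$ (as the isomorphism just found forces) and that they generate $\Q(j)$ over $\Q$.

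For the second and third assertions I would make the quaternionic multiplication explicit: exhibit a pair of correspondences on $C_j\times C_j$ realizing a set of generators of $\O$ inside $\End(B_j)$, and read off from their defining equations that they become morphisms once one is over $\Q(\sqrt{-6j})\,k_\O$, which gives that assertion. That $\End^0(B_j)\simeq\B_6$ for all but finitely many rational $j$ is because a strict containment $\B_6\subsetneq\End^0(B_j)$ would force $B_j$ to be isogenous to the square of an elliptic curve with complex multiplication, i.e.\ $j$ to be one of the CM points of the genus-zero moduli curve; an explicit computation with the imaginary quadratic orders that admit an embedding into $\O$ shows that exactly $26$ of those CM points are rational. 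The Rosati involution of the canonical principal polarization is an involution of $\B_6$ of the shape $\varphi\mapsto\nu^{-1}\varphi^*\nu$ for a pure quaternion $\nu\in\B_6^\times$, and I would identify $\nu=\mu$ up to $\Q^\times$ by evaluating both involutions on the explicit correspondences above and comparing with the induced action of $\End(B_j)$ on $H^0(C_j,\Omega^1)$.

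For the last assertion, note that $k_\Z=\Q(j)$ is exactly the content of the first assertion, and that by Skolem--Noether every automorphism of the central simple $\Q$-algebra $\B_6$ is inner; hence for each $\sigma\in G_{\Q(j)}$ the conjugation $\acc\sigma{(\cdot)}$ on $\End^0(B_j)\simeq\B_6$ is inner by some $\lambda(\sigma)\in\B_6^\times$, well defined modulo $\Q^\times$, and $k_\O$ is the fixed field of $\{\sigma:\lambda(\sigma)\in\Q^\times\}$. For each of $R=R_2,R_3,R_6$ the field $k_R$ is likewise the fixed field of the subgroup of those $\sigma$ for which the conjugation isomorphism $\acc\sigma{B_j}\to B_j$ coming from the first assertion can be chosen compatibly with $R$, and working this out reduces to deciding which Galois elements simultaneously preserve the explicit model (or its explicit twist) and fix the correspondences realizing $R$. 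Carrying this through is what produces the three quadratic fields $\Q(\sqrt{-(27j+16)})$, $\Q(\sqrt{-j(27j+16)})$ and $\Q(\sqrt j)$; the diamond shape of the diagram --- $k_\O$ biquadratic over $\Q(j)$ and equal to the compositum of any two of $k_{R_2},k_{R_3},k_{R_6}$ --- then follows from the distinctness of these three quadratic characters together with the observation that any two of the subrings $R_2,R_3,R_6$ generate $\B_6$ as a $\Q$-algebra, so that fixing any two of them compatibly forces fixing all of $\O$.

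\textbf{Main obstacle.} The genuinely laborious step is this last one: making the Galois action on the quaternionic endomorphisms --- equivalently the cocycle $\sigma\mapsto\lambda(\sigma)$ together with compatible choices of the conjugation isomorphisms --- explicit enough to recognize the individual fields $k_R$, which requires carrying along all of the explicit data (the equation \eqref{eq:family of curves Cj}, the isomorphism of the first assertion, and the correspondences of the second). By comparison the group-theoretic lattice, the enumeration of rational CM points, and the determination of the Rosati involution are routine once that information is in hand.
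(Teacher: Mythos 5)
The paper does not prove this theorem. Immediately before stating it the authors write ``Some properties of these objects proved in \cite{ba-gr} are summarized in the following statement,'' and the theorem is attributed to Baba--Granath; it is imported as a black box and used later to compute the cocycles $[c_{B_j}]$. So there is no proof of this statement in the paper for your proposal to be compared against --- you are reconstructing an argument belonging to the cited reference, not to this paper.

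That said, as a reconstruction of the Baba--Granath proof your sketch is sound in outline. The polynomial identity $X^6 F_j\bigl(-2(27j+16)/X\bigr)=-8(27j+16)^3\,\overline F_j(X)$ is exactly what verifies the explicit isomorphism in the first bullet (for instance, matching leading and constant coefficients gives $(-4+3\sqrt{-6j})\cdot 2^6(27j+16)^6 = -8(27j+16)^3\cdot 8(27j+16)^3(4-3\sqrt{-6j})$, which checks). You correctly note that this only bounds the field of moduli from above and that one needs the Igusa invariants (or an equivalent Torelli argument) for the reverse inclusion. The Skolem--Noether description of $k_\O$ and of the $k_{R_d}$ via inner automorphisms is the right framework, and the observation that any two of $R_2,R_3,R_6$ generate $\B_6$ as a $\Q$-algebra (e.g.\ $\jmath = (\imath-1)(\jmath+\imath\jmath)$) is what forces the diamond to close up at $k_\O$. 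Two points you should make more explicit if this were to be written out: first, the isomorphisms $\phi_\sigma$ in the field-of-moduli condition must respect the polarization, not just the endomorphisms, so the Skolem--Noether cochain $\lambda(\sigma)$ has to be taken among polarization-compatible choices (this is precisely what later makes the degree map $\delta(\mu_\sigma)$ computable in the paper); second, the claim that exactly $26$ rational $j$ give CM is a finite but nontrivial computation with optimal embeddings of imaginary quadratic orders into $\O$, which you are right to leave to the reference. As you yourself flag, the genuinely laborious step --- identifying the three quadratic fields $k_{R_2}$, $k_{R_3}$, $k_{R_6}$ from the explicit model --- is left as an outline; that is exactly what occupies the bulk of \cite{ba-gr}.
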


When $j\in\Q$ the abelian surfaces $B_j$ have the property that
for every $\sigma\in G_\Q$ there exists an isomorphism
$\phi_\sigma\colon\acc\sigma{B_j}\rightarrow B_j$,
but this isomorphism does not need to be compatible with $\End(B_j)$ (indeed, in general it is not compatible with
$\End(B_j)$).
However, if the algebra of endomorphisms of $B_j$ is isomorphic to $\B_6$ then we
can always find  isogenies compatible with $\End(B_j)$. More generally, we have the following result.

\begin{proposition}\label{proposition-there exist compatible isogenies}
Let $B/\Qb$ be an abelian variety whose algebra of endomorphisms is a central simple $\Q$-algebra.
Let $\sigma\in G_\Q$.
If $\ \acc\sigma{B}$ and $B$ are isogenous
then there exists an isogeny $\acc\sigma{B}\rightarrow B$ compatible with $\End(B)$.
\end{proposition}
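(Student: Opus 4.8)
The plan is to start from an arbitrary isogeny $\phi\colon\acc\sigma B\to B$ (which exists by hypothesis) and correct it by an automorphism of $B$ so that the result becomes compatible with $\End(B)$. First I would consider the map $\psi\mapsto\phi\circ\acc\sigma\psi\circ\phi^{-1}$ from $\End(B)$ to $\End(B)$. This is a ring automorphism of $\mathcal A:=\End(B)$: indeed, conjugation by $\phi$ is an isomorphism $\End(\acc\sigma B)\to\End(B)$, and $\psi\mapsto\acc\sigma\psi$ is an isomorphism $\End(B)\to\End(\acc\sigma B)$ (this uses that $\acc\sigma(\End(B))=\End(\acc\sigma B)$). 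Moreover this automorphism fixes the center $\Q$ of $\mathcal A$, since $\mathcal A$ is a central simple $\Q$-algebra and $\acc\sigma$ acts trivially on $\Q$. So it is a $\Q$-algebra automorphism of the central simple $\Q$-algebra $\mathcal A$.

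The key step is then to invoke the Skolem--Noether theorem: every $\Q$-algebra automorphism of a central simple $\Q$-algebra $\mathcal A$ is inner. Hence there exists a unit $u\in\mathcal A^*=\Aut(B)$ such that
$$\phi\circ\acc\sigma\psi\circ\phi^{-1}=u\,\psi\,u^{-1}\qquad\text{for all }\psi\in\End(B).$$
I would then set $\mu_\sigma:=u^{-1}\circ\phi\colon\acc\sigma B\to B$, which is again an isogeny. A direct check gives
$$\mu_\sigma\circ\acc\sigma\psi\circ\mu_\sigma^{-1}
  =u^{-1}\circ\phi\circ\acc\sigma\psi\circ\phi^{-1}\circ u
  =u^{-1}\circ(u\,\psi\,u^{-1})\circ u=\psi$$
for every $\psi\in\End(B)$, so $\mu_\sigma$ is compatible with $\End(B)$, as required.

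I expect the only delicate point to be the verification that $\psi\mapsto\phi\circ\acc\sigma\psi\circ\phi^{-1}$ genuinely lands in $\End(B)$ and is an algebra automorphism fixing $\Q$ pointwise — in particular that Galois conjugation induces an isomorphism $\End(B)\xrightarrow{\sim}\End(\acc\sigma B)$ of $\Q$-algebras. Once that is in place, Skolem--Noether does all the work and the rest is the one-line computation above. (One should note that this argument does not require $\acc\sigma B$ and $B$ to be isomorphic, only isogenous; the unit $u$ is produced inside the endomorphism \emph{algebra} $\End(B)=\End^0(B)$, so it is an isogeny-automorphism, which is exactly what the notion of compatible isogeny in Definition~\ref{definition-Q-variety} allows.)
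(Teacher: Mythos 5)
Your argument is essentially identical to the paper's: start from an arbitrary isogeny, observe that conjugation twisted by $\acc\sigma{(\cdot)}$ is a $\Q$-algebra automorphism of $\End(B)$ fixing the center $\Q$, invoke Skolem--Noether to write it as conjugation by a unit, and absorb that unit into the isogeny. The only differences are cosmetic (the direction of the inner automorphism and the order in which the unit is composed with $\phi$).
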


\begin{proof}
Call $\D$ the endomorphism algebra of $B$, and let
$\phi_\sigma\colon\acc\sigma{B}\rightarrow B$ be an isogeny. The map
$\varphi\mapsto\phi_\sigma\circ\acc\sigma{\varphi}\circ\phi_\sigma^{-1}$ is a
$\Q$-algebra  automorphism of $\D$ since it fixes the center,
which is $\Q$ by hypothesis. Then the Noether--Skolem Theorem
implies that it is inner; that is, there exists an element
$\psi_\sigma\in\D$ such that
$\phi_\sigma\circ\acc\sigma{\varphi}\circ\phi_\sigma^{-1}=\psi_\sigma^{-1}\circ
\varphi\circ\psi_\sigma$. Then the isogeny
$\mu_\sigma=\psi_\sigma\circ\phi_\sigma$ is compatible with $\D$.
\end{proof}

Hence, we see that if $j$ belongs to $\Q$ and the endomorphism algebra of
$B_j$ is isomorphic to $\B_6$, then $B_j$ is a  building block
completely defined over the field
$$K=\Q(\sqrt{-6j},\sqrt{j},\sqrt{-(27j+16)},\sqrt{-2(27j+16)}).$$
From now on we assume that $\End(B_j)\otimes \Q\simeq \B_6$. Now we aim to compute
the cohomology class $[c_{B_j}]$. The degree component
$\overline{[c_{B_j}]}$ belongs to $\Hom(G_\Q,\Q^*/\{\pm 1
\}\Q^{*2})$, and we use the following notation to indicate the
elements of this group: for $t,d\in\Q^*$ we denote by $(t,d)_P$ the
homomorphism that sends an element $\sigma\in G_\Q$ to $d\cdot\{\pm 1\}\Q^{*2}$
if $\acc\sigma{\sqrt t}=-\sqrt t$ and has trivial image otherwise.
An expression of the form $(t_1,d_1)_P\cdot (t_2,d_2)_P\cdot{\dots}\cdot(t_r,d_r)_P$
denotes the product of such homomorphisms,
and all elements in $\Hom(G_\Q,\Q^*/\{\pm 1\}\Q^{*2})$ admit a
(non-unique) expression of this kind.

\begin{proposition}
The degree and sign components of $[c_{B_j}]$ are given by
\begin{equation}\label{eq:component grau de c_Bj}
\overline{[c_{B_j}]}=(-(27j+16),3)_P\cdot (-j(27j+16),2)_P,
\end{equation}
\begin{equation}\label{eq:component signe de C_Bj}
[c_{B_j}]_\pm=(-(27j+16),3)_\Q\cdot (-j(27j+16),2)_\Q\cdot(2,3)_\Q.
\end{equation}
\end{proposition}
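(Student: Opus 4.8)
The plan is to pin down an explicit, locally constant system of isogenies $\mu_\sigma\colon\accl\sigma{B_j}\to B_j$ compatible with $\End(B_j)$ and then read both components of $[c_{B_j}]$ off the resulting $2$-cocycle. Throughout we are in the case $j\in\Q$ with $\End^0(B_j)\simeq\B_6$, so $F=Z(\End(B_j))=\Q$ and, by \eqref{csquare-equal-degree}, $[c_{B_j}]\in H^2(G_\Q,\Q^*)[2]$ has a sign and a degree component with respect to \eqref{decomposition-sign-degree}. For $\sigma\in G_\Q$ with $\accl\sigma{\sqrt{-6j}}=-\sqrt{-6j}$ we take $\phi_\sigma$ to be the isomorphism $\accl\sigma{B_j}\to B_j$ induced by the explicit curve isomorphism in the Baba--Granath theorem; for $\accl\sigma{\sqrt{-6j}}=\sqrt{-6j}$ we take $\phi_\sigma$ to be the identity (legitimate since $C_j$, hence $B_j$, has field of moduli $\Q$). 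These $\phi_\sigma$ need not be compatible with $\End(B_j)\simeq\B_6$, but conjugation by $\phi_\sigma$ induces a $\Q$-algebra automorphism of $\B_6$, which is inner by Noether--Skolem (Proposition \ref{proposition-there exist compatible isogenies}), realized by some $\psi_\sigma\in\B_6^*$; we set $\mu_\sigma=\psi_\sigma\circ\phi_\sigma$ and choose the system locally constant, so that $c_{B_j}(\sigma,\tau)=\mu_\sigma\circ\acc\sigma{\mu_\tau}\circ\mu_{\sigma\tau}^{-1}$ takes values in $\Q^*$.

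\textbf{The degree component \eqref{eq:component grau de c_Bj}.} As recalled after \eqref{decomposition-sign-degree}, $\overline{[c_{B_j}]}$ is the homomorphism $\sigma\mapsto\delta(\mu_\sigma)\bmod\{\pm1\}\Q^{*2}$, and since $\phi_\sigma$ is an isomorphism this equals $\nr(\psi_\sigma)\bmod\{\pm1\}\Q^{*2}$. So it suffices, for each $\sigma$, to identify the inner automorphism of $\B_6$ induced by $\phi_\sigma$ (equivalently, by $\acc\sigma{(\cdot)}$ when $\phi_\sigma=\Id$, using that $\Q(\sqrt{-6j})k_\O$ is a field of definition of $\End(B_j)$) and to exhibit a generator $\psi_\sigma$ of it. Here the Baba--Granath field-of-moduli diagram does the work: $\sigma$ fixes $R_2=\Z[\imath]$ (resp.\ $R_3\simeq\Z[\sqrt{-3}]$, resp.\ $R_6=\Z[\mu]$) precisely when it fixes $k_{R_2}=\Q(\sqrt{-(27j+16)})$ (resp.\ $k_{R_3}=\Q(\sqrt{-j(27j+16)})$, resp.\ $k_{R_6}=\Q(\sqrt j)$). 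Matching which of $\imath$, $\jmath+\imath\jmath$, $\mu=2\jmath+\imath\jmath$ is negated by conjugation by $\phi_\sigma$ against the action of $\sigma$ on these three quadratic fields (for instance $\jmath$ negates $\imath$, while $\imath$ and $\jmath+\imath\jmath$ each negate both $\jmath+\imath\jmath$ and $\mu$, and $\jmath+\imath\jmath$ fixes itself), one may take $\psi_\sigma$ to be a scalar multiple of $\jmath$, of $\imath$, or of $\jmath+\imath\jmath$ according to the case; since $\nr(\imath)=-2$, $\nr(\jmath)=-3$, $\nr(\jmath+\imath\jmath)=3$, all $\equiv 2,3,3$ modulo $\{\pm1\}\Q^{*2}$, collecting the cases gives exactly $(-(27j+16),3)_P\cdot(-j(27j+16),2)_P$.

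\textbf{The sign component \eqref{eq:component signe de C_Bj}.} For this one carries the computation of $c_{B_j}(\sigma,\tau)$ through at the level of honest elements of $\Q^*$ and separates the part already accounted for by the degree component. Two sources feed the sign: first, the square-root quantities $(-2(27j+16))^{3/2}$ occurring in the explicit $\phi_\sigma$, whose Galois translates differ by signs controlled --- via the group-extension mechanism of Lemma \ref{lemma-quadratic-twists-and-classes} --- by the quaternion symbols pairing $\Q(\sqrt{-(27j+16)})$ with $3$ and $\Q(\sqrt{-j(27j+16)})$ with $2$, contributing $(-(27j+16),3)_\Q\cdot(-j(27j+16),2)_\Q$; second, the fact that the corrections $\psi_\sigma$ lie in the division algebra $\B_6$ and not in its center, so that the multiplicativity defect of $\sigma\mapsto\psi_\sigma$ is measured by a $2$-cocycle cohomologous to the Brauer class of $\B_6=(2,3)_\Q$, contributing the factor $(2,3)_\Q$. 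Multiplying the three symbols yields \eqref{eq:component signe de C_Bj}. Alternatively one can shorten this step: since the degree component pins down the $\delta(\mu_\sigma)$, one can exhibit a splitting map $\alpha$ with $\alpha(\sigma)^2\equiv\delta(\mu_\sigma)$ and the attached splitting character $\varepsilon_\alpha$, and then use the identity $[c_{B_j}]_\pm=[c_{\varepsilon_\alpha}]$ recalled above, reducing the computation to evaluating the Brauer class of an explicit quadratic Galois character.

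\textbf{Main obstacle.} The delicate step is the sign component: the various $\pm1$ ambiguities --- the choice of each $\psi_\sigma$ (well-defined only up to $\Q^*$), the choices of square roots inside the $\phi_\sigma$, and the local-constancy normalization of $\{\mu_\sigma\}$ --- interact, and they must be made coherently so that the representative cocycle lands in ``$\mathrm{sign}\times\mathrm{degree}$'' normal form; in particular one must carefully separate the intrinsic Brauer contribution $(2,3)_\Q$ coming from $\B_6$ itself from the contributions tied to the quadratic extensions cut out by $j$ and $27j+16$, since all three live in $\Br_2(\Q)$.
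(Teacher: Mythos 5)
Your treatment of the degree component is essentially the paper's, though with some details garbled. The paper shows, for $\sigma$ fixing $k_{R_d}$ but not $k_\O$, that any Noether--Skolem corrector $\psi_\sigma$ lies in $R_d\otimes\Q$, hence has the form $a+bc_d$; using the explicit Rosati involution $\varphi'=\mu^{-1}\varphi^*\mu$ and the identity $\delta(\mu_\sigma)=\psi_\sigma\psi_\sigma'=(a+bc_d)^2=a^2+db^2+2abc_d$, the requirement $\delta(\mu_\sigma)\in\Q^*$ with $b\neq 0$ forces $a=0$, whence $\delta\equiv d$. Your version shortcuts this by asserting $\delta(\mu_\sigma)\equiv\nr(\psi_\sigma)$ and then reading off $\psi_\sigma$ by inspection, but the list you give --- ``a scalar multiple of $\jmath$, of $\imath$, or of $\jmath+\imath\jmath$'' --- is not the right triple: the inner automorphisms fixing $R_2$, $R_3$, $R_6$ pointwise are realized by $\imath$, $\jmath+\imath\jmath$, and $\mu=2\jmath+\imath\jmath$ respectively (not by $\jmath$, whose centralizer is $\Q(\jmath)\simeq\Q(\sqrt 3)$, which is none of the three $R_d\otimes\Q$), and the corresponding reduced norms are $-2,\,3,\,-6$. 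The answers match modulo $\{\pm1\}\Q^{*2}$, so the formula comes out right, but your identification of the correctors as written is incorrect, and you do not justify why $\psi_\sigma$ is forced to be a pure quaternion (in the paper that follows from $\delta(\mu_\sigma)\in\Q^*$, not from an inspection of which ring is fixed).

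For the sign component there is a genuine gap. The paper does not attempt a direct cocycle computation; it invokes Theorem 2.8 of \cite{quer-MC}, which expresses the Brauer class of $\End(B)$ as $[c_B]_\pm$ times a Brauer class built from the degree component, giving immediately
\[(2,3)_\Q=[c_{B_j}]_\pm\cdot(-(27j+16),3)_\Q\cdot(-j(27j+16),2)_\Q.\]
Your proposed direct computation is not a proof. The claim that the ``multiplicativity defect of $\sigma\mapsto\psi_\sigma$ is measured by a $2$-cocycle cohomologous to the Brauer class of $\B_6=(2,3)_\Q$'' is precisely the content that needs to be established, and nothing you write establishes it: the $\psi_\sigma$ are only well-defined up to $\Q^*$, they interact with the independent $\pm1$ ambiguities in $\phi_\sigma$, and there is no a priori reason the resulting defect should be the full Brauer class of $\B_6$ rather than some other $2$-torsion element of $\Br(\Q)$. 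Likewise, invoking Lemma \ref{lemma-quadratic-twists-and-classes} (a statement about how quadratic twisting changes $[c_{B/K}]$) to control the square-root ambiguities in the explicit curve isomorphism is not applicable here; that lemma compares two varieties, not two choices of $\phi_\sigma$ for the same variety. Your ``alternative'' route --- exhibit a splitting map $\alpha$ with $\alpha^2\equiv\delta$ and compute $[c_{\varepsilon_\alpha}]$ --- is viable in principle (it is essentially the proof of the cited Theorem 2.8), but you would then have to actually produce $\varepsilon_\alpha$ and show $[c_{\varepsilon_\alpha}]$ equals the stated product of quaternion symbols, which is not done. In short: the degree component is correct in outcome but imprecise, and the sign component is asserted rather than proved; the missing ingredient is exactly the formula from \cite[Theorem 2.8]{quer-MC} that the paper uses.
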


\begin{proof}
Recall that the degree component is the map
$\sigma\mapsto\delta(\mu_\sigma)\ \mathrm{mod}\ \{\pm 1\}\Q^{*2}$,
where $\mu_\sigma$ is any isogeny $\mu_\sigma\colon\acc\sigma{B_j}\rightarrow B_j$ compatible with $\End(B_j)$.
If $\sigma\in\Gal(\Qb/k_\O)$, by the definition of $k_\O$ there
exists an isomorphism
$\phi_\sigma\colon\acc\sigma{B_j}\rightarrow B_j$ compatible with $\End(B_j)$ such that
$\phi_\sigma^*(\rho)=\acc\sigma{\rho}$, where $\rho$ is the
polarization of $B_j$ given by $C_j$. Applying the definition of
$\delta$ (see~\cite[p.~220]{pyle}) we find that
$$\delta(\phi_\sigma)=\phi_\sigma\circ\acc\sigma{\rho^{-1}}\circ\hat\phi_\sigma\circ\rho=
\phi_\sigma\circ\phi_\sigma^{-1}\circ\rho^{-1}\circ\hat\phi_\sigma^{-1}\circ\hat\phi_\sigma\circ\rho=1.$$
Hence, the degree component is the inflation of a map defined in
$\Gal(k_\O/\Q)$. Now, since $k_\O=k_{R_2}\cdot k_{R_3}$, we just
need to compute $\delta(\mu_\sigma)$ for $\sigma\in\Gal(\Qb/k_{R_d})$ for $d=2,3$.

Let $\sigma$ be an element in $\Gal(\Qb/k_{R_d})$ that does not
fix $k_\O$. Since $k_{R_2}=\Q(\sqrt{-(27j+16)})$ and
$k_{R_3}=\Q(\sqrt{-j(27j+16)})$, in order to
prove~\eqref{eq:component grau de c_Bj} we just need to see that
$\delta(\mu_\sigma)\equiv d\,\pmod{\Q^{*2}}$. By the
definition of $k_{R_d}$ there exists an isomorphism
$\phi_\sigma\colon\acc\sigma{B}\rightarrow B$ compatible with the
endomorphisms in $R_d$, but not necessarily compatible with all
the endomorphisms. However, we know from Proposition~\ref{proposition-there
exist compatible isogenies} that we can find $\psi_\sigma\in \B_6$
such that $\mu_\sigma=\psi_\sigma\circ\phi_\sigma$ is an
isogeny compatible with all the endomorphisms. Moreover, from the proof of this  proposition we see that $\psi_\sigma$
is characterized by the
property that
$$\phi_\sigma\circ\acc\sigma{\varphi}\circ\phi_\sigma^{-1}=\psi_\sigma^{-1}\circ
\varphi\circ\psi_\sigma, \quad  \text{for every }\varphi\in \B_6.$$ But if we take
$\varphi\in R_d$, this particularizes to
$\varphi=\psi_\sigma^{-1}\circ \varphi\circ\psi_\sigma$, so
$\psi_d$ commutes with every element in $R_d$, which implies that
$\psi_d$ belongs to $R_d\otimes\Q$. Hence, if we write
$R_d=\Z[c_d]$, with $c_2=\imath$ and $c_3=\jmath+\imath\jmath$ we
have that $\psi_d=a+bc_d$ for some $a,b\in \Q$. In fact, $b\neq 0$
because otherwise the isomorphism $\psi_\sigma$ would be
compatible with all the endomorphisms of $B_j$, and this is not
the case since we are assuming that $\sigma $ does not fix $k_\O$.
Using the definition of $\delta(\mu_\sigma)$ we see that
\begin{eqnarray*}
\delta(\mu_\sigma)&=&\delta(\psi_\sigma\circ\phi_\sigma)=\psi_\sigma\circ\phi_\sigma\circ
{^\sigma\rho^{-1}}\circ\widehat{\psi_\sigma\circ\phi_\sigma}\circ\rho=
\psi_\sigma\circ\phi_\sigma\circ\phi_\sigma^{-1}\circ\rho^{-1}\circ\hat\phi_\sigma^{-1}\circ\hat\phi_\sigma\circ\hat\psi_\sigma\circ\rho\\&=&
\psi_\sigma\circ\rho^{-1}\circ\hat\psi_\sigma\circ\rho=\psi_\sigma\circ\psi_\sigma'.
\end{eqnarray*}
Now we know that if $\varphi\in\B_6$ its Rosati involution is given
by $\varphi'=\mu^{-1}\varphi^*\mu$. Hence,
\begin{eqnarray*}
\delta(\mu_\sigma)&=&\psi_\sigma\circ
\psi_\sigma'=(a+bc_d)(a+bc_d)'=(a+bc_d)\mu^{-1}(a-bc_d)\mu\\
&=&(a+bc_d)^2=a+db^2+2abc_d,
\end{eqnarray*}
and since $\delta(\mu_\sigma)$ must lie in $\Q^*$ and $b\neq0$,
we see that $a=0$ and $\delta(\mu_\sigma)\equiv d\pmod{\Q^{*2}}$.

Now, to prove the identity~\eqref{eq:component signe de C_Bj} we
use~\cite[Theorem 2.8]{quer-MC}, which gives a formula for the
Brauer class of the endomorphism algebra of a building block.
Specialized to our case, and having computed the degree
component, this formula gives
\begin{equation*}
(2,3)_\Q=[c_{B_j}]_\pm\cdot(-(27j+16),3)_\Q\cdot(-j(27j+16),2)_\Q,
\end{equation*}
and from this~\eqref{eq:component signe de C_Bj} follows.
\end{proof}

\subsection*{ A concrete example: $j=1/81$.}
Let us now consider the example corresponding to this value of the parameter;
let $C=C_j$ and $B=\operatorname{Jac}(C)$. We remark that $B$ is QM and not CM, i.e., $\End(B)\otimes\Q\simeq\B_6$.
Then $B$ is a building block completely defined over $K=\Q(\sqrt{-6},\sqrt{-3})$
and it is strongly modular over $K$ if and only if $[c_{B/K}]\in\Ext(K/\Q,\Q^*)$;
that is, if and only if it can be represented by a symmetric cocycle.
In fact, since the degree component is always symmetric (over an abelian extension)
we need to check this property only for the sign component $[c_{B/K}]_\pm\in H^2(K/\Q,\{\pm1\})$.

For $d=-3,-6$ we denote by $\varepsilon_d$ the non-trivial character
$\varepsilon_d\colon\Gal(\Q(\sqrt{d})/\Q)\rightarrow \{\pm1\}$.
The group $H^2(K/\Q,\{\pm 1\})$ admits a basis as a $\Z/2\Z$-vector space consisting of
the classes of three 2-cocycles that we call
$c_{\varepsilon_{-6}},\ c_{\varepsilon_{-3}}$ and $c_{-6,-3}$
(see for instance~\cite[Section 2]{quer-JA} for the definition of these
cocycles and their properties). Hence, we have that
\begin{equation}\label{eq:expresion-for-cB/K-in-terms-of-the-basis}
[c_{B/K}]_\pm= [c_{\varepsilon_{-6}}]^a \cdot
[c_{\varepsilon_{-3}}]^b \cdot [c_{-6,-3}]^c
\end{equation}
for some $a,b,c\in\{0,1\}$, and $[c_{B/K}]$ lies in $ \Ext(K/\Q,\{\pm
1\})$ if and only if $c=0$. We know that $\Inf
[c_{B/K}]_\pm=[c_B]_\pm$, which in this case turns out to be trivial
by~\eqref{eq:component signe de C_Bj}. Since $\Inf
[c_{\varepsilon_{-6}}]=(-6,-1)_\Q$, $\Inf
[c_{\varepsilon_{-3}}]=(-3,-1)_\Q$ and $\Inf
[c_{-6,-3}]=(-6,-3)_\Q$, the only possibilities are $a=b=c=0$ or
$a=b=1,\ c=0$. In both cases $c=0$, which implies  that
$[c_{B/K}]_\pm$ lies in $\Ext(K/\Q,\{\pm 1\})$ and therefore $B$ is
strongly modular over $K$.

Let $\xi_1,\xi_2\in H^2(K/\Q,\Q^*)$ be the cohomology classes
with degree component $\overline\xi_1=\overline\xi_2=(-3,6)_P$
and sign component $\xi_{1\pm}=1$, $\xi_{2\pm}=[c_{\varepsilon_{-6}}]\cdot[c_{\varepsilon_{-3}}]$,
and let $A=\Res_{K/\Q}B$.
We have seen that either $[c_{B/K}]=\xi_1$ or $[c_{B/K}]=\xi_2$.
By direct computation we see that $$\Q^{\xi_1}[G]\simeq \Q(\sqrt{6})\times\Q(\sqrt{6})\quad \text{
and}\quad \Q^{\xi_2}[G]\simeq\Q(\sqrt{6},\sqrt{-6}),$$ where $G=\Gal(K/\Q)$.
By Proposition~\ref{proposition-endomorphism-algebra-restriction-of-scalars}
we see that  if $[c_{B/K}]=\xi_1$ then $A\sim_\Q A_g^2\times A_h^2$
for some newforms $g$ and $h$ with $\End_\Q(A_g)\simeq\End_\Q(A_h)\simeq\Q(\sqrt{6})$.
On the other hand, if $[c_{B/K}]=\xi_2$ then $A\sim_\Q A_f^2$ for some newform $f$ with
$\End_\Q(A_f)\simeq\Q(\sqrt{6},\sqrt{-6})$.
If $\p$ is a prime of $K$, let $L_\p(B/K,T)$ be the numerator of the zeta function of
the reduction of $B$ modulo $\p$, and let
$$L_p(B/K,T)=\prod_{\p\,|\, p}L_\p(B/K,T^{\mathrm{N}\p}),$$ which is
in fact equal to $L_p(A/\Q,T)$. By counting points in the
reduction of $B$ modulo primes of $K$ we have computed some of
these local factors:
\[
\begin{array}{|l|l|}
\hline p & L_p(B/K,T)^{-1}=L_p(A/\Q,T)^{-1}\\ \hline
5&(1-4T^2+5^2T^4)^4 \\
\hline
7&(1-2T+7T^2)^8 \\
\hline
11&(1-16T^2+11^2T^4)^4 \\
\hline
13&(1-25T^2+13^2T^4)^4 \\
\hline
17&(1-20T^2+17^2T^4)^4 \\
\hline
19&(1-37T^2+19^2T^4)^4 \\
\hline
23&(1+40T^2+23^2T^4)^4 \\
\hline
29&(1-34T^2+29^2T^4)^4 \\
\hline
31&(1-T+31T^2)^8  \\
\hline
37&(1-10T^2+37^2T^4)^4 \\
\hline
41&(1+58T^2+41^2T^4)^4 \\
\hline
\end{array}\]
Some of these factors are of the form $(1+e_pT^2+p^2T^4)^4$,
and if we had $A\sim_\Q A_g^2\times A_h^2$ for some newforms
$g=\sum b_nq^n$ and $h=\sum c_nq^n$ this would imply that
$$1+e_pT^2+p^2T^4=(1-b_pT+pT^2)(1-\acc\sigma b_pT+pT^2),$$
being $\sigma$ the non-trivial automorphism of $\Q(\sqrt 6)/\Q$.
 A similar relation would hold for the coefficients $c_p$. But
this relation implies that $b_p^2=c_p^2=2p-e_p$, which is
impossible for the computed values of $e_p$ because then the
coefficients $b_p$ and $c_p$ would not lie in $\Q(\sqrt 6)$.
Therefore, the actual cohomology class is $[c_{B/K}]=\xi_2$ and
$A\sim_\Q A_f^2$ for some newform $f=\sum a_nq^n$ with the $a_n$
generating $ \Q(\sqrt 6,\sqrt{-6})$. However,
Proposition~\ref{proposition-cB-restriction} tells us that there
also exists a variety in the $\Qb$-isogeny class of $B$ completely
defined over $K$ and with cohomology class $\xi_1$. We will find
such a variety as the Jacobian of a  quadratic twist of $C$.

Let $\gamma=2-\sqrt 2$. The extension $K(\sqrt \gamma)/\Q$ is Galois,
and an easy computation shows that the cohomology class associated
to~\eqref{eq:exact-seqeunce-of-galois-groups} in this particular case is
$[c_{\varepsilon_{-6}}]\cdot [c_{\varepsilon_{-3}}]$.
Hence, the variety $B_\gamma$ is completely defined over $K$ and
$[c_{B_\gamma/K}]=c_{B/K}\cdot [c_{\varepsilon_{-6}}]\cdot
[c_{\varepsilon_{-3}}]=\xi_1$. Arguing as before we see that
$A_\gamma=\Res_{K/\Q}B_\gamma$ is $\Q$-isogenous to the square of
a product of two modular abelian varieties with field of Fourier
coefficients equal to $\Q(\sqrt 6)$. In $S_2(\Gamma_0(2^4\cdot
3^5))$ we find a newform with field of Fourier coefficients
$\Q(\sqrt 6)$ and Fourier expansion
$$g=q+\sqrt6 q^5-2q^7+\sqrt6 q^{11}-q^{13}+3\sqrt6q^{17}+q^{19}-\sqrt6 q^{23}+\dots$$
Let $\varepsilon$ be the quadratic Dirichlet character of conductor $8$
satisfying $\varepsilon(3)=\varepsilon(5)=-1$. Let $h=g\otimes\varepsilon$, which is a newform in
$S_2(\Gamma_0(2^6\cdot3^5))$.
In the following table we list some local factors of the $L$-functions
corresponding to the varieties $B_\gamma/K$, $A_g/\Q$ and $A_h/\Q$.
$$
\begin{array}{|l|l|l|l|}
\hline p & L_p(B_\gamma/K,T)^{-1}=L_p(A_\gamma/\Q,T)^{-1} & L_p(A_g/\Q,T)^{-1} & L_p(A_h/\Q,T)^{-1}\\
 \hline
5&(1+4T^2+5^2T^4)^4 & (1+4T^2+5^2T^4) &(1+4T^2+5^2T^4)\\
\hline
7&(1+2T+7T^2)^8 &(1+2T+7T^2)^2 &(1+2T+7T^2)^2\\
\hline
11&(1+16T^2+11^2T^4)^4&(1+16T^2+11^2T^4) &(1+16T^2+11^2T^4)\\
\hline
13&(1-T+13T^2)4(1+T+13T^2)4& (1+T+13T^2)^2 &(1-T+13T^2)^2\\
\hline
17&(1-20T^2+17^2T^4)4&(1-20T^2+17^2T^4) &(1-20T^2+17^2T^4)\\
\hline
19&(1-T+19T^2)^4(1+T+19T^2)^4&(1-T+19T^2)^2 &(1+T+19T^2)^2\\
\hline
23&(1+40T^2+23^2T^4)^4 & (1+40T^2+23^2T^4) &(1+40T^2+23^2T^4)\\
\hline
29&(1+34T^2+29^2T^4)^4& (1+34T^2+29^2T^4) &(1+34T^2+29^2T^4)\\
\hline
31&(1-T+31T^2)^8 & (1-T+31T^2)^2 &(1-T+31T^2)^2\\
\hline
37&(1-8T+37T^2)^4(1+8T+37T^2)^4&(1-8T+37T^2)^2 &(1+8T+37T^2)^2\\
\hline
41&(1+58T^2+41^2T^4)^4&(1+58T^2+41^2T^4) &(1+58T^2+41^2T^4)\\
\hline
\end{array}$$
We have checked the equality of the local factors of the $L$-functions of  $A_\gamma$ and $ A_g^2\times A_h^2$ for all
primes
$p<1000$ ($p\neq 2,3$) and this suggests that $A_\gamma\sim_\Q
A_g^2\times A_h^2$.

Comparing the local factors $L_p(B/K,T)$ and $L_p(B_\gamma/K,T)$
we can also find a modular form $f$ such that $A\sim_\Q A_f^2$ as
a twist of $g$. More precisely, let $\psi$ be the Dirichlet
character of order $4$ and conductor $16$ such that
$\psi(3)=-\sqrt{-1}$ and $\psi(5)=\sqrt{-1}$. The modular form
$f=g\otimes\psi$ is a newform in
$S_2(\Gamma_0(2^8\cdot3^5),\psi^2)$ and the local factors
$L_p(B/K,T)$ and $L_p(A_f,T)^2$ coincide for all primes $p<1000$
($p\neq2,3$).

\subsection*{ A concrete example: $j=-4/27$.}
We now consider another example, corresponding to the stated value of $j$.
The Jacobian $B$ of the curve $C_j$ is also a building
block completely defined over $K=\Q(\sqrt{-6},\sqrt{-3})$,
but a similar analysis shows that in this case the only possibilities
for $[c_{B/K}]_\pm$ have $c=1$ in the
expression~\eqref{eq:expresion-for-cB/K-in-terms-of-the-basis},
and therefore $[c_{B/K}]_\pm$ is not symmetric.
This means that no variety in the isogeny class of $B$ is strongly modular over $K$.
If we let for instance $L=K(\sqrt{-1})$ it is easy to see that
there exist symmetric elements of $H^2(L/\Q,\{\pm 1\})$ whose
inflation to $G_\Q$ is $[c_B]_\pm$, and then
by Proposition~\ref{proposition-cB-inflation} and Theorem~\ref{main-theorem}
there exists a variety isogenous to $B$ completely defined and strongly modular over $L$.

Thus, in this case we have seen that it is enough to go to a quadratic extension $L$ of $K$
to find a variety in the isogeny class of $B$ that is strongly modular over L.
However, in the family $\{B_j\}_{j\in\Q}$ we can
find varieties where any minimal field $L$ with this property is
arbitrarily large. In fact, by Theorem~\ref{theorem:existence of strongly
modular abelian varieties} this is equivalent to find in this
family varieties where the degree of any splitting field is
arbitrarily large. We will see this by means of the following lemma.

\begin{lemma}\label{lemma-orders-of-splitting-characters}
Let $r\geq 2$ be an integer and let $p$ be a prime such that
$p\equiv1\pmod{2^r}$ and $p\equiv-1\pmod3$.
Then the order of any splitting character for $B_{1/p}$ is at least $2^r$.
\end{lemma}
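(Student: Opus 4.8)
The plan is to determine the order of an arbitrary splitting character $\varepsilon_\alpha$ for $B_{1/p}$ from the behaviour of the sign component $[c_{B_{1/p}}]_\pm$ at the single prime $p$. Two facts recalled in the text make this possible: that $[c_{B}]_\pm=[c_\varepsilon]$ for any splitting character $\varepsilon$ of a building block $B$, and that $[c_\varepsilon]\in\Br_2(\Q)$ is exactly the obstruction to $\varepsilon$ admitting a square root as a Galois character. So it suffices to show (i) that $[c_{B_{1/p}}]_\pm$ has nontrivial local invariant at $p$, which by restricting the same statement to $G_{\Q_p}$ means that $\varepsilon_\alpha|_{G_{\Q_p}}$ has no square root, and then (ii) that this forces $2^r\mid\ord(\varepsilon_\alpha)$. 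Throughout one assumes $\End(B_{1/p})\otimes\Q\simeq\B_6$, so that the formulas \eqref{eq:component grau de c_Bj}--\eqref{eq:component signe de C_Bj} apply; this discards only finitely many $p$.

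For step (i) I would substitute $j=1/p$ into \eqref{eq:component signe de C_Bj}. Writing $N=16p+27$ and reducing radicands modulo squares, $-(27j+16)\equiv -Np$ and $-j(27j+16)\equiv -N$ in $\Q^*/\Q^{*2}$, so
$$[c_{B_{1/p}}]_\pm=(-Np,3)_\Q\cdot(-N,2)_\Q\cdot(2,3)_\Q.$$
Since $p\equiv-1\pmod3$ we have $p\neq3$, hence $N\equiv27\pmod p$ is a unit at $p$; and $p$ is odd because $4\mid p-1$. Therefore $(-N,2)_p=(2,3)_p=1$, while the only factor of odd valuation at $p$ is $(-Np,3)_p$, which equals the Legendre symbol $\left(\tfrac{3}{p}\right)$. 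By quadratic reciprocity together with $p\equiv1\pmod4$ this equals $\left(\tfrac{p}{3}\right)$, and $p\equiv-1\equiv2\pmod3$ makes it $-1$. Hence $[c_{B_{1/p}}]_\pm$ is ramified at $p$, so $\varepsilon_\alpha|_{G_{\Q_p}}$ is not a square.

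Step (ii) is where a bit of care is needed, and it is what I would regard as the core of the proof. Write $p-1=2^sm$ with $m$ odd; since $2^r\mid p-1$ one has $s\geq r$. By local class field theory a finite-order character of $G_{\Q_p}$ is the same thing as a finite-order character $\chi$ of $\Q_p^\times\simeq p^{\Z}\times\F_p^\times\times(1+p\Z_p)$, say $\chi=(\zeta,\psi_0,\psi_1)$ with $\psi_0$ a character of the cyclic group $\F_p^\times$ of order $p-1$. Because the factors of the character group attached to $p^{\Z}$ and to $1+p\Z_p$ are $2$-divisible, $\chi$ is a square in the character group iff $\psi_0$ is; and in a cyclic group of order $2^sm$ an element is a square iff $2^s\nmid\ord(\psi_0)$. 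Applied to $\chi=\varepsilon_\alpha|_{G_{\Q_p}}$, which by (i) is not a square, this yields $2^s\mid\ord(\psi_0)$, hence $2^s\mid\ord(\varepsilon_\alpha|_{G_{\Q_p}})\mid\ord(\varepsilon_\alpha)$, and therefore $\ord(\varepsilon_\alpha)\geq 2^s\geq 2^r$. The substitution into the cocycle formula and the Hilbert-symbol and reciprocity bookkeeping are routine; the one step that must be carried out with attention is this last local-class-field-theory dictionary, which turns ``no local square root at $p$'' into ``the full $2$-part of $p-1$ divides $\ord(\varepsilon_\alpha)$.''
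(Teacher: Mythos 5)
Your proposal is correct and follows the same route as the paper: compute the local invariant of $[c_{B_{1/p}}]_\pm$ at $p$ from \eqref{eq:component signe de C_Bj} and deduce the lower bound on $\ord(\varepsilon_\alpha)$ from the fact that the splitting character is not locally a square at $p$. The paper phrases the final step via the Dirichlet-character identity ``local invariant $=\varepsilon_p(-1)$'' whereas you make the local class field theory dictionary and the $2$-divisibility of the non-tame factors of the character group of $\Q_p^\times$ explicit, but these are the same argument.
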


\begin{proof}
For simplicity we call $B$ the variety $B_{1/p}$, and let $[c_B]$ be its attached cohomology class.
By~\eqref{eq:component signe de C_Bj}
the sign component $[c_B]_\pm$ is given as the following product of quaternion algebras:
\begin{equation*}
[c_B]_\pm=(-(27+16p)/p,3)_\Q\cdot (-(27+16p),2)_\Q\cdot (2,3)_\Q.
\end{equation*}
Applying the formulas for computing the local Hilbert Symbols at $p$ we find that
$$(-(27+16p)/p,3)_p=-1,\qquad (-(27+16p),2)_p=1,\qquad (2,3)_p=1,$$
and this implies that the local component of $[c_B]_\pm$ at the prime $p$ is $-1$.
But $[c_B]_\pm=[c_\varepsilon]$, where $\varepsilon$ is the splitting
character  associated to any splitting map $\alpha$ for $B$. We
can identify $\varepsilon$ with a primitive Dirichlet character of
a certain conductor $N$, and if $\varepsilon_p$ denotes the
component of $\varepsilon$ modulo the largest power of $p$
dividing $N$, then the local component of $[c_\varepsilon]$ at $p$
is given by $\varepsilon_p(-1)$.
The value $\varepsilon_p(-1)=-1$ is taken by the characters of order multiple of $2^{\ord_2(p-1)}$,
and it follows that $\ord(\varepsilon)\geq\ord(\varepsilon_p)\geq2^{\ord_2(p-1)}$,
which is at least $2^r$ by our choice of $p$.
\end{proof}

\begin{proposition}
For any integer $r$ there exists a variety $B$ in the family
$\{B_j\}_{j\in\Q}$ such that any splitting field for $B$ has
degree at least $2^r$.
\end{proposition}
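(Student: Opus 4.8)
The plan is to exhibit the required variety as $B:=B_{1/p}$ for a well-chosen prime $p$, and then to read off the lower bound on splitting fields from Lemma~\ref{lemma-orders-of-splitting-characters} together with the inclusion $K_{\varepsilon_\alpha}\subseteq K_\alpha$ relating a splitting field to the cyclic field cut out by the associated splitting character. Fix $r$; we may assume $r\geq 2$, since the conclusion for a smaller exponent follows a fortiori once it is proved for some larger one. I would first produce the prime: I want $p\equiv 1\pmod{2^r}$ and $p\equiv -1\pmod 3$. By the Chinese Remainder Theorem these two conditions are equivalent to a single congruence $p\equiv a\pmod{3\cdot 2^r}$, where $a$ is odd (being $\equiv 1$ modulo $2^r$ with $r\geq 2$) and satisfies $3\nmid a$; hence $\gcd(a,3\cdot 2^r)=1$ and Dirichlet's theorem on primes in arithmetic progressions supplies infinitely many such $p$.

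Among those infinitely many primes, only finitely many can have $1/p$ equal to one of the (at most $26$) rational values of $j$ for which $\End^0(B_j)\not\simeq\B_6$, by the theorem of Baba--Granath; discard them. For any remaining $p$ the surface $B=B_{1/p}$ satisfies $\End^0(B)\simeq\B_6$, so, as explained in the discussion following Proposition~\ref{proposition-there exist compatible isogenies}, it is a $\Qb$-building block (with $F=Z(\End(B))=\Q$), and the notions of splitting map, splitting field and splitting character apply to it. Now let $\alpha$ be an arbitrary splitting map for $B$, with splitting field $K_\alpha$, and let $\varepsilon_\alpha$ be the associated splitting character, $K_{\varepsilon_\alpha}$ the fixed field of its kernel. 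Because $\delta(\mu_\sigma)$ is totally positive, hence real, one has $K_{\varepsilon_\alpha}\subseteq K_\alpha$, as recalled in the text. Since $\varepsilon_\alpha\colon G_\Q\to\Qb^*$ is a continuous character its image is finite cyclic of order $\ord(\varepsilon_\alpha)$, so $[K_{\varepsilon_\alpha}:\Q]=\ord(\varepsilon_\alpha)$. By Lemma~\ref{lemma-orders-of-splitting-characters}, applied with our $p$, every splitting character for $B$ — in particular $\varepsilon_\alpha$ — has order at least $2^r$. Therefore
\[
[K_\alpha:\Q]\;\geq\;[K_{\varepsilon_\alpha}:\Q]\;=\;\ord(\varepsilon_\alpha)\;\geq\;2^r,
\]
and as $\alpha$ was arbitrary, every splitting field for $B$ has degree at least $2^r$.

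The proof is short; the only point that needs a little care is the first step, namely checking that the progression cut out by the two congruences is admissible for Dirichlet — which it is, since the residue $a$ is automatically coprime to the modulus $3\cdot 2^r$ — and that infinitely many of the resulting primes $p$ avoid the finite exceptional set of Baba--Granath, so that $B_{1/p}$ is a genuine non-CM QM surface and hence a building block to which Lemma~\ref{lemma-orders-of-splitting-characters} applies. Everything after that is a direct combination of that lemma with the containment $K_{\varepsilon_\alpha}\subseteq K_\alpha$ and the identification $[K_{\varepsilon_\alpha}:\Q]=\ord(\varepsilon_\alpha)$.
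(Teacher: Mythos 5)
Your proof is correct and follows essentially the same route as the paper: choose $p\equiv 1\pmod{2^r}$, $p\equiv -1\pmod 3$, set $B=B_{1/p}$, and combine Lemma~\ref{lemma-orders-of-splitting-characters} with the containment $K_{\varepsilon_\alpha}\subseteq K_\alpha$. The extra care you take — invoking Dirichlet to produce such a $p$ and discarding the finitely many Baba--Granath exceptional values so that $\End^0(B_{1/p})\simeq\B_6$ — is a welcome filling-in of details that the paper leaves tacit.
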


\begin{proof}
Take a prime $p$ as in the previous lemma, and take as $B$ the
variety $B_{1/p}$. Let $\alpha$ be any splitting map for $B$, and
let $\varepsilon$ be its associated splitting character. Then we
have that $[K_\alpha:\Q]\geq [K_\varepsilon:\Q]\geq 2^r$.
\end{proof}

From  Lemma~\ref{lemma-orders-of-splitting-characters} we can
derive another interesting consequence.

\begin{proposition}
Let $g$ be any natural number.
There exist varieties $B$ in the family $\{B_j\}_{j\in \Q}$ such that
every $\Q$-simple abelian variety $A$ of $\GL_2$-type having $B$ as its simple factor
is of dimension $\dim A\geq g$.
\end{proposition}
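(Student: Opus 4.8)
The plan is to combine Lemma~\ref{lemma-orders-of-splitting-characters} with the elementary observation that the nebentypus of a weight-two newform forces a cyclotomic subfield into its field of Fourier coefficients. Given $g$, I would fix an integer $r\geq 2$ with $2^{r-1}\geq g$ and, by Dirichlet's theorem on primes in arithmetic progressions, choose a prime $p$ with $p\equiv 1\pmod{2^r}$ and $p\equiv -1\pmod 3$; enlarging $p$ if necessary, we may also assume $j=1/p$ is not one of the $26$ exceptional values, so that $\End(B_{1/p})\otimes\Q\simeq\B_6$ and $B:=B_{1/p}$ is a building block with $Z(\End(B))=\Q$. The proof of Lemma~\ref{lemma-orders-of-splitting-characters} shows that the local component of $[c_B]_\pm$ at $p$ equals $-1$; hence \emph{any} Dirichlet character $\varepsilon$ of $G_\Q$ with $[c_\varepsilon]=[c_B]_\pm$ satisfies $\varepsilon_p(-1)=-1$, so that $2^r\mid\ord(\varepsilon)$. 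In particular, $2^r$ divides the order of every splitting character of $B$.

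Now let $A/\Q$ be an arbitrary $\Q$-simple abelian variety of $\GL_2$-type having $B$ as its (absolutely simple) factor. As $B$ is QM and not CM, $A$ has no CM, so by the results of Ribet and of Khare--Wintenberger together with Faltings's isogeny theorem --- exactly as in the proof of Proposition~\ref{proposition-characterization-strong-modularity-GL2-over-Q} --- there is a weight-two newform $f$ with $A\sim_\Q A_f$; in particular $\dim A=\dim A_f=[E_f:\Q]$, where $E_f=\End_\Q(A_f)=\Q(\{a_n\})$. Then $B$ is a $\Qb$-simple factor of $A_f$, and by the building-block/newform dictionary of Ribet and Pyle (see \cite{ribet-avQ}, \cite{pyle} and \cite[Theorem~2.6]{quer-MC}) the nebentypus $\varepsilon_f$ of $f$ is a splitting character for $B$, i.e.\ $[c_{\varepsilon_f}]=[c_B]_\pm$. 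Combining with the previous paragraph, $2^r\mid\ord(\varepsilon_f)$.

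To finish, note that since $f$ has weight two the Hecke relation $a_{\ell^2}=a_\ell^2-\varepsilon_f(\ell)\,\ell$ gives $\varepsilon_f(\ell)=(a_\ell^2-a_{\ell^2})/\ell\in E_f$ for every prime $\ell$ not dividing the level. Thus all values of $\varepsilon_f$ lie in $E_f$; as a Dirichlet character of order $m:=\ord(\varepsilon_f)$ its values generate $\Q(\zeta_m)$, so $\Q(\zeta_m)\subseteq E_f$, and since $2^r\mid m$ we get $\Q(\zeta_{2^r})\subseteq E_f$. Therefore
$$\dim A=[E_f:\Q]\geq [\Q(\zeta_{2^r}):\Q]=2^{r-1}\geq g ,$$
and the variety $B=B_{1/p}$ has the required property.

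The only non-formal step is the identification $[c_{\varepsilon_f}]=[c_B]_\pm$, that is, that the nebentypus of the newform realizing $A$ is a splitting character of $B$; this is where one leaves the self-contained portion of the paper and appeals to the general theory of building blocks. Everything else --- Dirichlet's theorem, discarding the finitely many bad values of $j$, the weight-two Hecke identity, and the fact that the values of a Dirichlet character of order $m$ generate $\Q(\zeta_m)$ --- is routine.
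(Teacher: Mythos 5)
Your proof is correct and follows the same overall strategy as the paper's: fix $r$ with $2^{r-1}\geq g$, take $p$ as in Lemma~\ref{lemma-orders-of-splitting-characters}, and argue that $\End_\Q(A)$ must contain the cyclotomic field $\Q(\zeta_{2^r})$ because it contains the values of a splitting character of $B$ whose order is a multiple of $2^r$. The difference is in how one lands the character values inside $\End_\Q(A)$. The paper invokes directly the Ribet--Pyle identification $\End_\Q(A)\simeq E_\alpha$ for a splitting map $\alpha$, together with the fact that $E_\alpha$ contains the values of $\varepsilon_\alpha$ by construction. You instead pass through modularity (Ribet/Khare--Wintenberger/Faltings) to realize $A\sim_\Q A_f$, identify the nebentypus $\varepsilon_f$ as a splitting character of $B$, and then use the weight-two Hecke identity $a_{\ell^2}=a_\ell^2-\varepsilon_f(\ell)\ell$ to see that $\varepsilon_f(\ell)\in E_f$ for all good $\ell$, hence $\Q(\zeta_{\ord\varepsilon_f})\subseteq E_f$. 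These routes are equivalent (under the dictionary, $E_\alpha\simeq E_f$), but your version is more explicit, and it has the small virtue of making precise that $2^r\mid\ord(\varepsilon)$ --- a divisibility which the lemma's statement (``at least $2^r$'') does not literally assert, but which the lemma's proof does give and which the cyclotomic-field step genuinely needs. You are also right to flag the identification of the nebentypus as a splitting character as the one place where the argument relies on the general building-block/newform dictionary rather than on material proved self-containedly in the paper; the paper's proof leans on the same external input, only packaged as ``$\End_\Q^0(A)\simeq E_\alpha$.''
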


\begin{proof}
Let $r$ be an integer such that $\varphi(2^r)=2^{r-1}\geq g$,
and take $B=B_{1/p}$ with $p$ a prime as in Lemma~\ref{lemma-orders-of-splitting-characters}.
If $A$ is a simple abelian variety of $\GL_2$-type that has $B$ as its simple factor, the field
$E=\End_\Q^0(A)$ is isomorphic to $E_\alpha$ for some splitting map $\alpha$ for $B$.
The field $E_\alpha$ contains the values
of the splitting character $\varepsilon$ associated to $\alpha$. Therefore, it  contains the $2^r$-th cyclotomic
extension
and we have that $\dim A=[E_\alpha:\Q]\geq\varphi(2^r)\geq g$.
\end{proof}



\end{document}